\DeclareSymbolFont{cyrletters}{OT2}{wncyr}{m}{n}
\DeclareMathSymbol{\Sha}{\mathalpha}{cyrletters}{"58}
\newcommand{\bP}{{\mathbb{P}}}
\newcommand{\bQ}{{\mathbb{Q}}}
\newcommand{\bR}{{\mathbb{R}}}
\newcommand{\bZ}{{\mathbb{Z}}}
\newcommand{\Bx}{{\mathbf{x}}}
\newcommand{\A}{{\mathcal{A}}}
\newcommand{\C}{{\mathcal{C}}}
\newcommand{\E}{{\mathcal{E}}}
\newcommand{\F}{{\mathcal{F}}}
\newcommand{\G}{{\mathcal{G}}}
\newcommand{\M}{{\mathcal{M}}}
\newcommand{\N}{{\mathcal{N}}}
\renewcommand{\P}{{\mathcal{P}}}
\newcommand{\Q}{{\mathcal{Q}}}
\newcommand{\R}{{\mathcal{R}}}
\renewcommand{\S}{{\mathcal{S}}}
\newcommand{\X}{{\mathcal{X}}}
\newcommand{\Z}{{\mathcal{Z}}}
\newcommand{\fJ}{\mathfrak{J}}
\newcommand{\ind}{\operatorname{ind}}
\newcommand{\Sym}{\operatorname{Sym}}
\newcommand{\GL}{\operatorname{GL}}
\newcommand{\SO}{\operatorname{SO}}
\newcommand{\ep}{\varepsilon}
\newcommand{\rad}{\operatorname{rad}}
\newcommand{\KT}{\operatorname{III}}
\newcommand{\KTS}{\operatorname{III}^\ast}  
\newcommand{\KTI}{\operatorname{I}_0^\ast}
\newcommand{\ol}{\overline}
\newcommand{\upchi}{{\raise.35ex\hbox{$\chi$}}}
\newcommand{\SL}{\operatorname{SL}}
\newcommand{\Vol}{\operatorname{Vol}}
\newtheorem{theorem}{Theorem}[section]
\newtheorem{proposition}[theorem]{Proposition}
\newtheorem{lemma}[theorem]{Lemma}
\theoremstyle{definition}
\numberwithin{equation}{section}
\begin{document}
	
	\title[Elliptic curves with a rational 2-torsion point ordered by conductor ]{Elliptic curves with a rational 2-torsion point ordered by conductor and the boundedness of average rank}

	\indent
	
	\author{Stanley Yao Xiao}
	\address{Department of Mathematics and Statistics \\
		University of Northern British Columbia \\
		3333 University Way \\
		Prince George, British Columbia, Canada \\  V2N 4Z9}
	\email{sxiao@unbc.ca}
	\indent
	
	
	\begin{abstract} In this paper we refine recent work due to A.~Shankar, A.~N.~ Shankar, and X.~Wang on counting elliptic curves by conductor to the case of elliptic curves with a rational 2-torsion point. This family is a \emph{small} family, as opposed to the large families considered by the aforementioned authors. We prove the analogous counting theorem for elliptic curves with so-called square-free index as well as for curves with suitably bounded Szpiro ratios. We note that our assumptions on the size of the Szpiro ratios is less stringent than would be expected by the naive generalization of their approach; this requires an application of a theorem of Browning and Heath-Brown. Our proof also relies on linear programming techniques. 
	\end{abstract}
	
	\maketitle

	\section{Introduction}
	\label{Intro}
	
	In this paper we consider the problem of counting elliptic curves and estimating their average rank in certain thin families ordered by their conductor. The families we consider will be sub-families of the family $\E_2$ of elliptic curves with a rational 2-torsion point (or equivalently, those that admit a degree 2 isogeny over $\bQ$) and having good reduction at $2$ and $3$. The latter is a technical assumption that simplifies the exposition. For this, we use the work of J. Mulholland on characterizing the reduction types at $2$ and $3$ of elliptic curves with rational 2-torsion \cite{Mul}.\\ 
	
	Aside from the work \cite{SSW}, there has been a recent explosion of interest in enumerating elliptic curves ordered by their conductors, inspired by work of He, Lee, Oliver, Pozdnyakov \cite{HLOP} and Zubrilina \cite{Zub}. \\
	
	We may assume without loss of generality, that our 2-torsion point is \emph{marked} and so it suffices to consider the parametrization 
	\begin{equation} \label{main fam} E_{a,b} : y^2 = x(x^2 + ax + b) : a,b \in \bZ.
	\end{equation}
	The discriminant of the curves in this family is given by 
	\begin{equation} \label{disc formula 1} \Delta(E_{a,b}) = 16 b^2 (a^2 - 4b).
	\end{equation}	
	The \emph{conductor} of the curve is the quantity $C(E_{a,b})$ given in terms of $p$-adic valuation by 
	\begin{equation}
	 v_p(C(E_{a,b})) =  \begin{cases} 0 & \text{if } p \nmid \Delta(E_{a,b}) \\ 
	 1 & \text{if } E_{a,b} \text{ has multiplicative bad reduction at } p \\
	 2 & \text{if } E_{a,b} \text{ has additive bad reduction at } p. \end{cases}	\end{equation}
	
	Our goal in this paper is to count certain subfamilies of elliptic curves of our families in $\E_2$ ordered by conductor, as well as estimating the average rank with respect to such an ordering. This is motivated by recent work of A.~N.~Shankar, A.~Shankar, and X.~Wang \cite{SSW} on counting elliptic curves in large families having bounded conductor. They also showed that the average size of the 2-Selmer group in the families they consider is at most $3$.   \\
	
	Before we state our theorems, let us make a comparison with the approach and results in \cite{SSW}. In their treatment they deal with families which are conjectured to have positive density. In order to make progress they make a key assumption which we do not need: they assumed that the $j$-invariant $j(E)$ of their elliptic curves $E$ is bounded by $O(\log |\Delta(E)|)$. In particular, this allows them to remove the well-known archimedean difficulties of counting elliptic curves by discriminant. Another idea critical to their argument is to embed their families into the space of binary quartic forms. As a corollary they are able to count the corresponding 2-Selmer elements with a bit of extra work. Since we do not require this embedding, and indeed the strength of our results depend on our direct treatment of the elliptic curves under consideration, we do not obtain an analogous count of 2-Selmer elements. In fact we expect such a count to be useless for our purposes: most of the 2-Selmer elements we encounter for the family $\E_2$ are expected to correspond to elements of the Shafarevich-Tate group $\Sha$; see work of Klagsbrun and Lemke-Oliver \cite{K-LO} as well as recent work of Bhargava and Ho \cite{B-Ho}. Therefore in order to obtain an estimate for the average rank we instead look at the \emph{3-Selmer group}, employing the parametrization of 3-Selmer elements of curves in our family due to Bhargava and Ho \cite{B-Ho}. \\
		
	There is a significant difference between the results of \cite{SSW} and our results. In particular, Shankar, Shankar, and Wang obtain theorems where counting by conductor produces the same order of magnitude as counting by discriminant: in other words, on average the conductor is only marginally smaller than the discriminant in the cases they consider. For us, however, we obtain substantially larger counts when ordering our curves by conductor rather than discriminant: this phenomenon is again caused by the special shapes of our discriminants. Our previous paper \cite{TX} with C.~Tsang provides another example of this phenomenon. \\ 
	
	To motivate our results, let us discuss the analogous results of \cite{SSW} in more detail. First we recall, as discussed in \cite{SSW}, that their strategy involves first counting elliptic curves by their \emph{discriminant} first. This is where they needed to assume that the $j$-invariants of the curves $E$ under consideration are bounded by $\log |\Delta(E)|$. This means that for this subset of curves the problem of counting by discriminant and counting by naive height are essentially equivalent. They then require one of two assumptions. The cleaner of the two assumptions is that the quotient $\Delta(E)/C(E)$, which they call the index, is square-free. The second assumption, which is not disjoint from the first, is a bound on the so-called Szpiro ratio defined by 
	
	\begin{equation} \label{sratio} \beta_E = \frac{\log |\Delta(E)|}{\log C(E)}.
	\end{equation}
	
	Their second assumption is the requirement that $\beta_E \leq \kappa$ for some $\kappa < 7/4$. \\ 
	
	In our situation we do not require any restrictions on the size of the $j$-invariant. This is because unlike the large family case it is possible to count the number of elliptic curves with rational 2-torsion by discriminant precisely; see Theorem 1.9 in \cite{TX}. On the other hand some restrictions on the $p$-adic valuation of the discriminant akin to the assumptions in \cite{SSW} mentioned above are necessary. \\
	
	Crucial to our arguments is the existence of a canonical degree-$2$ isogeny for each curve $E_{a,b}$ in $\E_2$, defined by:
	\begin{equation} \label{2-isog} \phi : \E_2 \rightarrow \E_2, E_{a,b} \mapsto E_{-2a, a^2 - 4b}.
	\end{equation}
	The conductor of an elliptic curve is invariant under isogeny, so 
	\[C(E_{a,b}) = C(E_{-2a, a^2 - 4b}).\]
	However, the discriminant is not in general invariant under isogeny. It is thus more natural to consider the Szpiro ratios of $E_{a,b}$ and $E_{-2a, a^2 - 4b}$ simultaneously. \\
	
	We will require the notion of the \emph{conductor polynomial} for an elliptic curve $E_{a,b} \in \E_2$:

	\begin{equation} \label{condpoly} \C(E_{a,b}) = b(a^2 - 4b).
	\end{equation}
	We also define
	\begin{equation} \label{ind} \ind(E_{a,b}) = \frac{\C(E_{a,b})}{C(E_{a,b})}.
	\end{equation} 
	Note that $\ind(E_{a,b}) \in \bZ$. \\ \\
	Observe that 
	\begin{align*} \ind(\phi(E_{a,b})) & = \ind(E_{-2a, a^2 - 4b}) \\
	& = \frac{(a^2 - 4b)(4a^2 - 4a^2 + 4b)}{C(E_{a,b})} \\
	& = 4 \ind(E_{a,b}).
	\end{align*}	

	The analogues of the non-archimedean conditions in \cite{SSW} for our situation will be: 
	
	\begin{enumerate}
	\item Either the conductor polynomial $\C(E_{a,b})$ is cube-free, equivalently that $\ind(E)$ (as defined by (\ref{ind})) is square-free; or
	\item The average of the Szpiro ratios $\beta(E_{a,b})$ and $\beta(E_{-2a, a^2 - 4b})$ is less than $155/68$. 
	\end{enumerate} 
	
	The value of the number $155/68 > 9/4$ is significant because $9/4$ is the natural analogue of the constant $7/4$ as an upper bound for the Szpiro ratio in \cite{SSW}, obtained using geometry of numbers. Therefore the positivity of $\Psi = 155/68 - 9/4$ represents progressing beyond the simple application of geometry of numbers present in \cite{SSW}. \\
	
	To see that $9/4$ is the analogue of $7/4$ in our case, note that in any large family the average value of the Szpiro ratio is expected to be $1$. In the family $\E_2$, however, due to the presence of the square term the average value of the Szpiro ratio is expected to be $3/2$ instead. We then see that $7/4 - 1 = 3/4$ and $3/2 + 3/4 = 9/4$, justifying $9/4$ as the natural boundary of the methods of \cite{SSW} when applied in the present setting. \\
	
	We define, for $\kappa < \frac{155}{68} $, the family
	\begin{equation} \label{kappa fam} \E_{2,\kappa} = \{E \in \E_2 : (\beta_E + \beta_{\phi(E)})/2 \leq \kappa\}
 	\end{equation} 
	and 
	\begin{equation} \label{3-free} \E_{2}^\ast = \{E \in \E_2 : \C(E_{a,b}) \text{ is cube-free}\}.
	\end{equation}
	
	The first theorem in this paper is the following, which gives a an asymptotic formula for the number of curves in the families $\E_{2,\kappa}$ and $\E_2^\ast$, is the following: 
	\begin{theorem} \label{MT1} Let $1 < \kappa  < 155/68$ be a positive number. Then we have
	\[\# \{E \in \E_2^\ast : C(E) < X \} \sim \frac{1}{64} \cdot \frac{(2 +  \sqrt{2}) \Gamma(1/4)^2}{3 \sqrt{\pi}} \prod_{p \geq 5} \left( 1 - \frac{2p-1}{p^3} + \frac{2(p-1)^2}{p^{13/4}}   \right) X^{\frac{3}{4}},  \]
	\[\# \{E \in \E_{2,\kappa} : C(E) < X\} \sim \frac{1}{64} \cdot \frac{(2 +  \sqrt{2}) \Gamma(1/4)^2}{3 \sqrt{\pi}} \prod_{p \geq 5} \left(1 + \frac{1}{p^2} + p^{\frac{3}{2}} \frac{p-1}{p^4} + \frac{2(p-1)^2}{p^3(p^{1/4} - 1)} \right) X^{\frac{3}{4}} \]
	\[\# \{E_{a,b} \in \E_2 : |\C(E_{a,b})| < X\} \sim \frac{1}{64} \cdot  \frac{(2 +  \sqrt{2}) \Gamma(1/4)^2}{3 \sqrt{\pi}} \prod_{p \geq 5} \left( 1 - \frac{1}{p^6} \right) X^{\frac{3}{4}} .\]
	\end{theorem}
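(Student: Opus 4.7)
The plan is to first establish the baseline count by conductor polynomial $|\C(E_{a,b})| = |b(a^2 - 4b)|$ (the third asymptotic), then bootstrap to the conductor-ordered counts for $\E_2^\ast$ and $\E_{2,\kappa}$. For the baseline, I would apply a Davenport-type lattice point estimate to the region $\{(a,b) \in \bR^2 : |b(a^2-4b)| \leq 1\}$. Its area can be computed by splitting the integral on the signs of $b$ and $a^2-4b$ (with one case requiring the substitution $b = t^2/4$, reducing to a beta-function evaluation) to produce the closed form $A = (2+\sqrt{2})\Gamma(1/4)^2/(3\sqrt{\pi})$; the raw count of lattice points is then $\sim A \cdot X^{3/4}$. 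To pass to $\E_2$ I would restrict to minimal integral models via inclusion--exclusion on pairs $(a,b)$ divisible by $(p^2, p^4)$ for some $p$, producing the M\"obius factor $\prod_{p \geq 5}(1 - p^{-6})$, and separately impose the good-reduction constraints at $p = 2, 3$ from Mulholland's tables, contributing the overall local density $1/64$.

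For $\E_2^\ast$ I would sieve the baseline by the condition $p^3 \nmid b(a^2-4b)$ for each prime $p \geq 5$. Computing $\#\{(a,b) \bmod p^3 : p^3 \mid b(a^2-4b)\}$ by splitting into cases (namely $p^3 \mid b$; $p \nmid b$ but $p^3 \mid a^2-4b$; and $p \mid a$ together with $p \mid b$) produces the principal local factor $1 - (2p-1)/p^3$. The secondary correction $2(p-1)^2/p^{13/4}$ arises when converting the ordering from $|\C|$ to $C$: since cube-freeness of $\C$ forces $\ind = \C/C$ to be square-free, the sum over admissible values of $\ind$ at each prime collapses to a closed-form contribution in which the exponent $13/4$ reflects the $X^{3/4}$ scaling under the inclusion $\{C < X\} \subseteq \{|\C| < X \cdot \ind\}$.

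The main obstacle is the second asymptotic, for $\E_{2,\kappa}$. The key identity $\Delta(E_{a,b}) \cdot \Delta(E_{-2a, a^2-4b}) = 2^{12}\C(E_{a,b})^3$ rewrites the averaged-Szpiro bound $(\beta_E + \beta_{\phi(E)})/2 \leq \kappa$ as $\ind(E)^3 \ll C(E)^{2\kappa - 3}$, whence $\ind(E) \ll C(E)^{(2\kappa - 3)/3}$; for $\kappa < 155/68$ this reads $\ind \ll C^{53/102 - \epsilon}$. I would then stratify the count by $\ind$, count $(a,b)$ with $|\C| \leq X \cdot \ind$ subject to the forced local divisibility at every $p \mid \ind$, and assemble the Euler product from a geometric series $\sum_{k \geq 1} p^{-k/4}$ at each prime (whence the $1/(p^{1/4}-1)$ factor). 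The difficulty is that a direct geometry-of-numbers argument only controls this tail when the threshold exponent is below $1/2$, corresponding to $\kappa < 9/4$; pushing to $155/68$---the gain $\Psi = 155/68 - 9/4$---requires invoking the theorem of Browning--Heath-Brown bounding the number of integer pairs $(a,b)$ in a box for which $b(a^2-4b)$ has a cube-full part of prescribed large size. Combined with a linear-programming optimization over the admissible divisibility exponents, this yields the required tail bound and pins down the stated Euler factor.
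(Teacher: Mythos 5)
Your proposal follows essentially the same route as the paper: computing the archimedean density of $\{|b(a^2-4b)|\le 1\}$ (the lemniscate constant), sieving to minimal models and to good reduction at $2,3$, assembling the Euler factors from the per-prime Kodaira-type densities weighted by $n^{3/4}$ over the index, and handling the $\E_{2,\kappa}$ tail via the identity $\Delta(E)\Delta(\phi(E))=2^{12}\C(E)^3$ (so $\ind(E)\ll C(E)^{(2\kappa-3)/3}$), with geometry of numbers up to $9/4$ and Browning--Heath-Brown plus linear programming to reach $155/68$. This matches the paper's argument in all essentials; the only quibble is bookkeeping of the constant (the paper gets $1/32$ from the Mulholland conditions at $2,3$, with the remaining factor of $1/2$ absorbed into the real density), which does not affect the structure of the proof.
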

	
	Note that the constant $1/64$ in the statement of Theorem \ref{MT1} comes from the assumption that our curves have good reduction at $2$ and $3$, which contributes $1/32$, the remainder coming from the real density. This assumption is introduced to avoid having to deal with the conductor polynomial being divisible by arbitrarily large powers of $2,3$ while the conductor remains bounded at $2,3$. \\
	
	We expect $\E_{2, \kappa}$ to satisfy the asymptotic formula in Theorem \ref{MT1} for all $\kappa > 1$. The $abc$-conjecture implies that there are only finitely many elliptic curves with $\beta_E > 6$. This then shows that if we replace $\E_{2, \kappa}$ with $\E_2$ that the second asymptotic formula in (\ref{MT1}) will hold as well. The $p$-adic densities present in the asymptotic formulae arise from the densities of elliptic curves in $\E_2$ over $\bQ_p$ with fixed Kodaira symbol. These densities are computed in Section \ref{Kod symbols}. \\
	
	We compare Theorem \ref{MT1} with the main theorems in \cite{SSW}. The value $\kappa < 9/4$ obtained in \cite{SSW} is from fine tuning the geometry of numbers method pioneered by Bhargava in \cite{Bha1} and extended to the space of binary quartic forms by Bhargava and Shankar in \cite{BhaSha1}. If we apply only geometry of numbers methods in the present paper, we will obtain the analogous quantity $\kappa < 9/4$. To push beyond this barrier we will require an application of a theorem of Browning and Heath-Brown in \cite{B-HB}. This theorem relies on the $p$-adic determinant method of Heath-Brown \cite{HB1}, refined by Salberger in \cite{Sal1}. One also requires certain linear programming bounds in order to make good use of this method; this is similar to joint work of the author and B.~Nasserden in \cite{NX}. \\
	
	Our next theorem follows from adapting the methods in \cite{B-Ho} on counting 3-Selmer elements of curves in $\E_2$: 
	
	\begin{theorem} \label{MT2} When elliptic curves in $\E_2$ are ordered by their conductor polynomials, the average size of their 3-Selmer groups is bounded.
	\end{theorem}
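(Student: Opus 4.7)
The plan is to adapt the Bhargava--Ho parametrization of $3$-Selmer elements of curves in $\E_2$ from \cite{B-Ho}, combined with orbit-counting in a fundamental domain in the style of Bhargava's work on bounded average rank, now with the ordering given by the conductor polynomial $\C(E_{a,b})$ rather than the naive height or the discriminant. The first step is to invoke the Bhargava--Ho representation $(G,V)$ defined over $\bZ$ whose non-trivial integer orbits under $G(\bZ)$ parametrize non-identity $3$-Selmer elements of curves $E_{a,b} \in \E_2$. The polynomial invariants of this representation recover the pair $(a,b)$ up to the standard identifications, and the identity element of the $3$-Selmer group corresponds to a distinguished family of reducible integer orbits that must be isolated at the counting stage.

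The second step is to construct a fundamental domain $\F \subset V(\bR)$ for the action of $G(\bZ)$ and apply Bhargava's averaging technique: count integer orbits in $\F$ whose invariants satisfy $|\C(E_{a,b})| < X$, then average over a suitably chosen compact region in $G(\bR)$ to convert the orbit count into a volume integral plus cusp error terms. The bulk volume is controlled by the measure of $\{v \in V(\bR) : |\C(v)| < X\}$, which by the same homogeneity analysis underlying Theorem \ref{MT1} scales as $X^{3/4}$. Since Theorem \ref{MT1} provides that the number of curves in $\E_2$ with $|\C(E_{a,b})| < X$ is also of order $X^{3/4}$, any upper bound of the shape $O(X^{3/4})$ on the total count of non-identity $3$-Selmer orbits is enough to yield boundedness of the average $3$-Selmer size.

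The main obstacle will be the cusp analysis adapted to the new ordering. Because $\C(E_{a,b}) = b(a^2 - 4b)$ is degenerate along the loci $b = 0$ and $a^2 = 4b$, the region $\{|\C| \leq X\}$ in $(a,b)$-space has thin tentacles extending to infinity, corresponding to curves whose discriminant $16 b^2(a^2 - 4b)$ is much larger than $\C(E_{a,b})$. Pulled back to $V(\bR)$, this interacts nontrivially with the cuspidal structure of $\F$, producing regions where the usual arguments showing that integer orbits reduce (and hence represent the identity Selmer element) do not apply directly. The task is to show, through congruence conditions modulo small primes together with a refined geometry-of-numbers and linear-programming analysis analogous to that used for Theorem \ref{MT1}, that irreducible integer orbits living in these tentacle-shaped cusp regions contribute only $O(X^{3/4})$ to the non-identity $3$-Selmer count. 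Once this is established, dividing by the $\sim cX^{3/4}$ curves yields the bounded average asserted in Theorem \ref{MT2}.
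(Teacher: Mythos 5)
Your proposal follows essentially the same route as the paper: invoke the Bhargava--Ho coregular-space parametrization of $3$-Selmer elements, run Bhargava's averaging over a fundamental domain with the height replaced by the degree-$24$ conductor polynomial (so that the relevant volume scales as $X^{3/4}$), control the cusp and the unbounded tentacles of $\{|\C|\leq X\}$ along $a_4=0$ and $a_2^2-4a_4=0$, and then observe that an $O(X^{3/4})$ upper bound on Selmer orbits against the $\gg X^{3/4}$ curve count from Theorem \ref{MT1} gives boundedness of the average. The paper handles the tentacles simply by restricting to $|a_4|,|a_2^2-4a_4|\geq 1$ and bounding the degenerate locus directly rather than via linear programming, but this is a difference of implementation, not of strategy.
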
 
	
	We remark that Theorem \ref{MT2} is weaker than Theorem 1.2 in \cite{SSW} in that we do not have an exact count of the average, and we are only able to obtain boundedness for $\E_2$ when ordered by the conductor polynomial. The reason is that the parametrization of 3-Selmer elements of $\E_2$ obtained by Bhargava and Ho \cite{B-Ho} uses a much more complicated coregular space of very high dimension, and as such we are unable to apply the $p$-adic determinant method which at present only provides superior results in the case of curves and surfaces. 
	
	\subsection{Uniformity estimates} In order to prove Theorem \ref{MT1} we will need to prove certain tail estimates. Indeed, we will require the following theorems:
	
\begin{theorem}[Uniformity estimate for curves with cube-free conductor polynomial] \label{cubefree uniform} For all $\delta > 0$ there exists a positive number $\kappa$ such that 
\[\# \left\{E_{a,b} \in \E_2^\ast : C(E_{a,b}) \leq X, \ind(E_{a,b}) > X^{2 \delta} \right \} = O_{\delta, \kappa} \left(X^{\frac{3}{4} - \kappa} \right).\]
\end{theorem}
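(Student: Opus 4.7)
The plan is to combine the local classification of Kodaira types at primes dividing $\ind(E_{a,b})$ with a sieve and a Lipschitz-type lattice-point count. First, for $E_{a,b} \in \E_2^\ast$ with $\ind(E_{a,b}) = m$, the reduction-type analysis for curves with a rational 2-torsion point \cite{Mul} forces $m$ to be square-free and, at each prime $p \mid m$, forces exactly one of $p^2 \mid b$ or $p^2 \mid a^2 - 4b$ to hold (in either case the reduction at $p$ is multiplicative and $v_p(\C(E_{a,b})) = 2$). Hence there is a unique coprime factorization $m = m_1 m_2$ with $m_1^2 \mid b$ and $m_2^2 \mid a^2 - 4b$, so in particular $m^2 \mid \C(E_{a,b})$. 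Combined with $\C(E_{a,b}) = \ind(E_{a,b}) \cdot C(E_{a,b})$ and $C(E_{a,b}) \leq X$, this yields the archimedean bound $|b(a^2-4b)| \leq mX$, and since $|b(a^2-4b)| \geq m^2$ we also obtain $m \leq X$.

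Next I would carry out a local density computation: a direct case analysis on $(v_p(b), v_p(a^2-4b))$ gives, for each prime $p \geq 5$,
\[\#\bigl\{(a,b) \pmod{p^2} : p^2 \mid b(a^2-4b)\bigr\} = 3p^2 - 2p,\]
so by the Chinese remainder theorem the density of pairs $(a,b) \in \bZ^2$ with $m^2 \mid b(a^2-4b)$ is at most $3^{\omega(m)}/m^2$ for square-free $m$ coprime to $6$. The region $\{|b(a^2-4b)| \leq Y\}$ has area $\asymp Y^{3/4}$ and perimeter $\asymp Y^{1/2}$, and combining a Lipschitz-type lattice point estimate with the above local density yields
\[\#\bigl\{(a,b)\in\bZ^2 : m^2 \mid b(a^2-4b),\ |b(a^2-4b)| \leq mX\bigr\} \ll \frac{3^{\omega(m)} X^{3/4}}{m^{5/4}}.\]
Summing over square-free $m > X^{2\delta}$ via the tail estimate $\sum_{m > Y} 3^{\omega(m)} m^{-5/4} \ll Y^{-1/4}(\log Y)^2$ gives
\[\#\bigl\{E_{a,b} \in \E_2^\ast : C(E_{a,b}) \leq X, \ \ind(E_{a,b}) > X^{2\delta}\bigr\} \ll X^{3/4 - \delta/2}(\log X)^2,\]
which proves the theorem for any $\kappa < \delta/2$.

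The main obstacle is verifying the lattice-point estimate uniformly in $m$: for $m \leq X^{1/3}$ the region is much larger than the congruence modulus $m^2$ and a standard Davenport-type estimate suffices, but for $X^{1/3} < m \leq X$ the region becomes comparable in size to $m^2$ and the usual boundary error terms are no longer automatically negligible. To handle this regime I would exploit the coprime factorization $m = m_1 m_2$ by substituting $b = m_1^2 b_0$ and $a^2 - 4b = m_2^2 c_0$, which reduces the problem to counting integer solutions of $a^2 = 4 m_1^2 b_0 + m_2^2 c_0$ with $|b_0 c_0| \leq X/m$; fibering over $a$ and applying the divisor bound together with the arithmetic of the resulting line $4 m_1^2 b_0 + m_2^2 c_0 = a^2$ in the hyperbolic region $|b_0 c_0| \leq X/m$ yields the required uniform control in the large-$m$ regime.
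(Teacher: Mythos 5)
Your setup is genuinely parallel to the paper's: the factorization $\ind = m_1 m_2$ with $m_1^2 \mid b$, $m_2^2 \mid a^2-4b$, the bound $|\C(E_{a,b})| \leq mX$, and the underlying equation $a^2 = 4m_1^2 b_0 + m_2^2 c_0$ are exactly the paper's equation (\ref{quad form}) (with the type-$\operatorname{III}$ primes $P$, which you omit, set to $1$). The main term $3^{\omega(m)}X^{3/4}m^{-5/4}$ and its summation over $m > X^{2\delta}$ are also fine. The gap is in the uniformity, which is the entire content of the theorem. For $m \leq X^{1/3}$ your ``standard Davenport-type estimate'' does not suffice as stated: the congruence set has $\asymp 3^{\omega(m)} m^2$ residue classes modulo $m^2$, and the $O(1)$ per class already contributes $\gg m^2$ per modulus, which summed over $X^{2\delta} < m \leq X^{1/3}$ exceeds $X^{3/4}$; one must first pass to the $(a,b_0)$ variables to reduce the number of classes, and even then the perimeter term $(mX)^{1/2}$ only sums acceptably for $m$ up to about $X^{1/6}$. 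More seriously, in the large-$m$ regime your line-fibering bound carries an irreducible $O(1)$ per fiber $(a, m_1, m_2)$, and the number of fibers is $\asymp \tau(m)(mX)^{1/2}$; summed over $m$ this is far larger than $X^{3/4}$, and you cannot discard the empty fibers without already knowing the count you are trying to prove. Concretely, in the configuration $|b_0| \asymp X^{1/2}$, $|c_0| \asymp X^{1/4}$, $m_1 \asymp 1$, $m_2 \asymp X^{1/4}$ (which satisfies $\ind \asymp X^{1/4} > X^{2\delta}$ and so must be handled), every elementary count --- the trivial one, the lattice count in either orientation, and your fiber-by-fiber count without dyadic localization --- lands exactly at $X^{3/4}$ with no power saving.

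This is precisely where the paper departs from geometry of numbers: it localizes all four of $v_0, w_0, v_1, w_1$ dyadically, shows by a linear-programming-style case analysis that the elementary bounds (Proposition 5.3 and Heath-Brown's lattice Lemma \ref{lat lem}) dispose of everything except a small neighbourhood of the critical exponent vector $(t_1,t_2,t_3,t_4) \approx (1/2,1/4,0,1/4)$, and there applies the Browning--Heath-Brown bound (Proposition \ref{BHBcor}, a determinant-method estimate uniform in the coefficients) to $Pu^2 = w_0 w_1^2 + 4 v_0 v_1^2$ viewed as a \emph{conic in the three variables} $(u, w_1, v_1)$ with $(v_0, w_0)$ fixed --- not as a line in $(b_0, c_0)$ with $a$ fixed, as you do. That change of viewpoint is what produces the saving $Z^{17/24}$ in the critical range, and the paper's introduction explicitly flags that no purely elementary argument is known to close this case. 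Your proposal is missing this key input, so the ``uniform control in the large-$m$ regime'' you assert at the end is not established.
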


For curves for which $(\beta_E + \beta_{\phi(E)})/2$ is bounded, we have the following:

\begin{theorem}[Uniformity estimate for curves with bounded average Szpiro ratio] \label{szpiro uniform} Suppose that $1 < \kappa < 155/68$ and $\theta > 0$. Then there exists $\kappa^\prime$, depending only on $\kappa$ and $\theta$, such that
\begin{equation} \label{szpiroct} \# \left\{E_{a,b} \in \E_{2,\kappa} : C(E_{a,b}) \leq X, \frac{3}{2} + \theta < \frac{\beta_E + \beta_{\phi(E)}}{2} \leq \kappa \right\} = O_{\kappa^\prime} \left(X^{\frac{3}{4} - \kappa^\prime} \right). \end{equation} 
\end{theorem}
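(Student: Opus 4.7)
My plan is to translate the Szpiro ratio condition into a polynomial size condition on the conductor polynomial $\C(E_{a,b})$, to reduce to counting pairs $(a,b) \in \bZ^2$ for which $\C(E_{a,b})$ has a large almost-squareful divisor, and then to bound that count by combining a geometry-of-numbers estimate with the Browning--Heath-Brown theorem on integer points on affine surfaces.

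First, a direct computation gives $\Delta(\phi(E_{a,b})) = 256\,b(a^2-4b)^2$, so that
\[
\log|\Delta(E_{a,b})| + \log|\Delta(\phi(E_{a,b}))| = 3\log|\C(E_{a,b})| + O(1).
\]
Since $C(E)$ is isogeny-invariant, this identifies $(\beta_E + \beta_{\phi(E)})/2$ with $\tfrac{3}{2}\log|\C(E)|/\log C(E) + o(1)$. The hypotheses in (\ref{szpiroct}) therefore translate to $C(E_{a,b})^{1+2\theta/3} < |\C(E_{a,b})| \leq X^{2\kappa/3}$ together with $C(E_{a,b}) \leq X$. A dyadic decomposition in $|\C(E_{a,b})|$ and $C(E_{a,b})$ reduces the task, up to $\log X$ losses, to bounding the number of $(a,b) \in \bZ^2$ satisfying $|\C(E_{a,b})| \leq Y := X^{2\kappa/3}$ and $\ind(E_{a,b}) > X^\eta$, for some $\eta > 0$ depending only on $\theta$ and $\kappa$.

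Next, I would exploit the arithmetic fact that for any prime $p \geq 5$ with $v_p(\ind(E_{a,b})) \geq 1$ one has $v_p(\C(E_{a,b})) \geq v_p(\ind(E_{a,b})) + 1$. Hence $Q := \ind(E_{a,b}) \cdot \rad(\ind(E_{a,b}))$ is a squareful divisor of $\C(E_{a,b})$ of size at least $\ind(E_{a,b}) > X^\eta$. It therefore suffices to bound
\[
\sum_{\substack{Q > X^\eta \\ Q \text{ squareful}}} \#\bigl\{ (a,b) \in \bZ^2 : Q \mid b(a^2-4b),\ 0 < |b(a^2-4b)| \leq Y \bigr\}.
\]
For each $Q$ the relevant lattice points lie in a region of dimensions $|a| \ll Y^{1/4}$, $|b| \ll Y^{1/2}$. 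A standard geometry-of-numbers estimate (in the spirit of \cite{Bha1, BhaSha1, SSW}) uses the fact that for squareful $Q$ the density of solutions of $Q\mid b(a^2-4b)$ modulo $Q$ is $O(Q^{-1+\ep})$, yielding a bound of the shape $Y^{3/4}/Q + (\text{boundary terms})$. Summing this alone over squareful $Q > X^\eta$, together with $\sum_{Q \text{ squareful},\, Q > T} 1/Q \ll T^{-1/2}$, recovers only the threshold $\kappa < 9/4$ (the shift of $3/4$ from the $7/4$ bound in \cite{SSW} accounting for the expected Szpiro ratio $3/2$ in $\E_2$).

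To push past $9/4$, for the large-$Q$ regime I would introduce $c := b(a^2-4b)/Q$ and apply the Browning--Heath-Brown theorem \cite{B-HB} to the affine surface $\{Qc = b(a^2-4b)\}$, which via the $p$-adic determinant method \cite{HB1, Sal1} yields a power saving over the lattice count, uniformly in $Q$. The two bounds are then fitted together via a linear program (in the spirit of \cite{NX}) whose variables are the crossover threshold $Q \sim Y^\alpha$, the box dimensions, and the auxiliary degree furnished by \cite{B-HB}; the numerical optimum gives the excess $\Psi = 155/68 - 9/4 = 1/34$ and produces a strictly positive $\kappa'$. The chief obstacle will be controlling the degenerate strata $b = 0$ and $a^2 = 4b$ on the auxiliary surface, where $\C(E_{a,b}) = 0$ and the geometric irreducibility hypothesis of \cite{B-HB} could fail; these loci must be stripped off and estimated separately (by direct parametrisation) before the $p$-adic determinant method can be invoked uniformly in $Q$.
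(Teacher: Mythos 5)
Your high-level ingredient list (translate the Szpiro condition into a largeness condition on $\ind(E_{a,b})$, then combine geometry of numbers with Browning--Heath-Brown via a linear program) matches the paper's advertised strategy, and your identity $\log|\Delta(E)|+\log|\Delta(\phi(E))| = 3\log|\C(E)|+O(1)$ is correct and is implicitly what makes $(\beta_E+\beta_{\phi(E)})/2 = \tfrac32\log|\C(E)|/\log C(E)$. However, the mechanism you propose after that point has genuine gaps. First, collapsing the index into a single squareful divisor $Q = \ind(E)\cdot\rad(\ind(E))$ of $\C(E)$ and claiming the density of $Q \mid b(a^2-4b)$ is $O(Q^{-1+\ep})$ is false: for $Q=p^4$ the locus $p^2\,\Vert\, b$, $p\mid a$ already has density $\asymp p^{-3} = Q^{-3/4}$. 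More importantly, this formulation discards exactly the information the argument needs, namely how the index splits between $b$ and $a^2-4b$ and between multiplicative and additive primes. The paper's linear program runs over the full exponent vector $(\gamma_{\KTI},\gamma_{\KT},\gamma_{\KTS},\alpha_1,\alpha_2,\beta_1,\beta_2,\upsilon,\nu)$ recording this splitting, and the $9/4$ threshold does not come from a squareful-divisor sum at all (your own accounting, $Y^{3/4}\sum_{Q>X^{\eta}}Q^{-1} \ll X^{\kappa/2-\eta/2}$ with $\eta$ small, only reaches $\kappa$ slightly above $3/2$); it comes from feeding the constraint $\alpha_1+\alpha_2 \geq 1/2-\delta$, which is itself a nontrivial counting statement (Proposition \ref{halflem}, proved by a multi-case analysis of $|P_{\KTI}P_{\KTS}a^2 - 4P_1Q_1u|\leq Z$ in arithmetic progressions), into the dual LP. You also defer the "boundary terms" of the lattice count, but these are precisely the crux: $Q$ ranges up to a power of $X$ larger than the box dimensions, and summing even the trivial $+1$ error over the relevant $Q$ already exceeds $X^{3/4}$.

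Second, your use of Browning--Heath-Brown is not the one that works. The result invoked in the paper is Corollary 2 of \cite{B-HB} for \emph{nonsingular ternary quadratic forms}, where the saving is $\Delta^{-1/3}$ in the determinant of the form; to access it one must first use the LP output (Proposition \ref{LPmain}) to force the near-square structure $Q_i \leq P_iX^{\nu}$, write $b = R_1^2S_1u$ and $a^2-4b = R_2^2S_2v$, factor the squarefree parts as $v_0v_1^2$ and $w_0w_1^2$, and thereby land on the conic $Pu^2 = w_0w_1^2+4v_0v_1^2$ whose determinant $\asymp Pv_0w_0$ is large precisely in the critical range $t_1+t_2 \approx 3/4$. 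Applying the determinant method to the cubic affine surface $Qc = b(a^2-4b)$ gives dimension-growth bounds with no useful uniformity in $Q$, so the crossover optimization you describe cannot be run, and the constant $155/68$ (equivalently $r = 1/102$ in the paper's Proposition \ref{nearquadprop}, via the exponent $17/24$ of Theorem \ref{uniform 1}) is asserted rather than derived. To repair the proposal you would need to (i) replace the single-divisor sieve by the paper's bookkeeping of $(P_i,Q_i,R_i,S_i)$ for $b$ and $a^2-4b$ separately, (ii) prove the analogue of Proposition \ref{halflem} to supply the binding LP constraint, and (iii) route the large-$Q$ regime through the conic in $\bP^2$ rather than a cubic surface.
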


A summary of these uniformity estimates in the case of large families is given in \cite{SSW}; we refer the reader to the aforementioned paper for historical progress. As mentioned in \cite{SSW}, the main difficulty in proving Theorems \ref{cubefree uniform} and \ref{szpiro uniform} is that the size of the conductor polynomial can be very large for curves with bounded conductor. This necessitates some new ideas. \\

Departing from the ideas given in \cite{SSW}, our new input is that the shape of our conductor polynomial allows us to turn the counting problem into one about counting integer points on a family of quadrics over $\bP^2$ or counting over sublattices of $\bZ^2$ defined by congruence conditions. Depending on the size of the parameters involved the bounds are stronger from one interpretation over the other. To do this we rely on a uniform estimate, essentially sharp, of counting integer points having bounded coordinates due to Browning and Heath-Brown \cite{B-HB}. This is an application of the $p$-adic determinant method of Heath-Brown \cite{HB1}, refined by Salberger to the global determinant method; see \cite{Sal1} and \cite{X2} for a summary. 

\subsection{Outline of the paper}

In Section \ref{Kod symbols} we will characterize the possible Kodaira symbols of curves in $\E_2$ and compute their relative densities. In Section \ref{condpolysec} we compute the real density of curves in $\E_2$ and prove the third part of Theorem \ref{MT1}. In Section \ref{countingthms} we prove the first two parts of Theorem \ref{MT1} assuming the uniformity estimates given by Theorems \ref{cubefree uniform} and \ref{szpiro uniform}. In Sections \ref{cube-free} and \ref{large szpiro} we prove the aforementioned uniformity estimates. Finally, in Section \ref{3-sel} we prove Theorem \ref{MT2}. 

\subsection*{Acknowledgements} We thank C.~Tsang for discussions during the early development phase of this paper. We also thank the referee for extensive comments that lead to several major improvements to the paper.

	\section{Kodaira symbols for curves in $\E_2$}
	\label{Kod symbols} 
	
	The Kodaira symbols of curves in $\E_2$, for primes $p \geq 5$ based on the Table 1 of \cite{SSW}, can be significantly simplified. This is because our minimal Weierstrass model is already minimal with respect to every prime $p \geq 5$ because the constant coefficient is zero. In particular, we see at once that all of the symbols requiring a power of $p$ to exactly divide the constant coefficient and for $p$ to divide the $x^2$ and $x$-coefficients are not possible for curves in $\E_2$. This leaves only $\operatorname{I}_n, n \geq 1, \operatorname{I}_0^\ast, \operatorname{III}$, and $\operatorname{III}^\ast$ as possible Kodaira symbols in our family for $p \geq 5$. \\
	
	For $p = 2,3$ separate treatment is necessary. Recall that we defined our $\E_2$ to consist of curves with good reduction at $2,3$. Therefore, it suffices to take advantage of the work of Mulholland in \cite{Mul} to identify congruence conditions on $a,b$ which give good reduction at $2,3$. 
	
	\subsection{Good reduction at $2$ and $3$} We use the following statement, which follows from Theorems 2.1 and 2.3 in \cite{Mul}. 
	
	\begin{proposition} \label{23red} Suppose that $E_{a,b}$ is given as in (\ref{main fam}). 
	\begin{enumerate}
	\item $E_{a,b}$ has good reduction at $2$ if and only if either $b \equiv 1 \pmod{2}$ and $a \equiv 6 \pmod{8}$, or $a \equiv 1 \pmod{4}$, $b \equiv 16 \pmod{32}$.
	\item $E_{a,b}$ has good reduction at $3$ if and only if either $a \equiv 1,2 \pmod{3}$ and $b \equiv 2 \pmod{3}$, or $3 | a$ and $3 \nmid b$. 
	\end{enumerate}
	\end{proposition}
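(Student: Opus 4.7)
The plan is to derive the two parts of the proposition as direct consequences of Mulholland's Theorems 2.1 and 2.3 in \cite{Mul}, which tabulate the Kodaira symbol of $E_{a,b}$ at $p=2$ and $p=3$ in terms of the $p$-adic valuations of $a$, $b$, and $a^{2}-4b$, together with the residues of certain auxiliary polynomial expressions in $(a,b)$ modulo suitable powers of $p$. The task is to read off the rows of these tables corresponding to Kodaira symbol $\operatorname{I}_0$ and to translate them into the clean congruence conditions stated in the proposition.

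For part (2) on $p=3$, the argument is short and direct. The Weierstrass model \eqref{main fam} has $a_1=a_3=a_6=0$, so the standard transformations that could lower the discriminant do not apply at $p=3$, and thus the model is already minimal at $3$. Consequently good reduction at $3$ is equivalent to $3\nmid 16b^{2}(a^{2}-4b)$, which (since $16\equiv 1\pmod{3}$ and $a^{2}-4b\equiv a^{2}-b\pmod 3$) reduces to $3\nmid b$ together with $a^{2}\not\equiv b \pmod 3$. Splitting into the cases $3\mid a$ and $3\nmid a$ (using $a^{2}\equiv 1\pmod{3}$ when $3\nmid a$) recovers exactly the two disjoint clauses of the proposition, matching the $\operatorname{I}_0$ row of Mulholland's table.

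For part (1) on $p=2$ the situation is substantially more delicate because the model \eqref{main fam} is generally non-minimal at $2$: the usual completion of the square is obstructed in residue characteristic $2$, and the minimal model depends sensitively on the $2$-adic parities of $a$ and $b$. Mulholland handles this through a careful case analysis involving an admissible change of variables to produce a minimal model; the Kodaira symbol is then determined by the $2$-adic valuations of $a$, $b$, $a^{2}-4b$ together with congruence tests on certain polynomial discriminants modulo $2^k$ for small $k$. Selecting the rows of his table where the Kodaira symbol is $\operatorname{I}_0$ and simplifying the resulting constraints produces precisely the two cases in the proposition: either $b$ odd with $a\equiv 6\pmod{8}$, or $a\equiv 1 \pmod{4}$ with $b\equiv 16\pmod{32}$.

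The main obstacle is the bookkeeping for $p=2$: one must correctly carry out the minimizing change of variables and verify that the congruences appearing in Mulholland's table collapse to the explicit residues modulo $8$ and $32$ stated here. This is mechanical rather than conceptual, but must be done with care because non-minimality at $2$ can mask the true reduction type if the transformation is not properly tracked.
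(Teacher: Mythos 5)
Your proposal is correct and follows essentially the same route as the paper, whose entire proof is a citation of Mulholland's Tables 2.1 and 2.3: you read off the $\operatorname{I}_0$ rows and translate them into the stated congruences, adding a short self-contained check at $p=3$ (where $3\nmid 16b^2(a^2-4b)$ reduces to the two stated clauses) and correctly noting that the $p=2$ case is pure bookkeeping over Mulholland's minimal-model case analysis. No gap to report.
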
 
	
	\begin{proof} See Tables 2.1 and 2.3 in \cite{Mul}. 
	\end{proof}
	
	We now proceed to compute the relative density of curves with good reduction. In the case of good reduction at $2$, the congruence conditions are determined by $(\bZ/32 \bZ)^2$. The first case in (1) of Proposition \ref{23red} contributes a density of $1/16$, and the second case contributes $1/128$, for a total of $9/128$. \\
	
	For the density of $(a,b)$ such that $E_{a,b}$ has good reduction at $3$, we note that the first case contributes $2/9$ to the density. The second case we must allow for the possibility that $a$ is divisible by an arbitrarily large power of $3$. Let $k \geq 1$ be given, and suppose $3^k$ exactly divides $a$. The number of pairs $(a,b) \in (\bZ/3^{k+1} \bZ)^2$ such that $a \equiv 3^k, 2 \cdot 3^k \pmod{3^{k+1}}$ and $3 \nmid b$ is $2(3^{k+1} - 3^k) = 4 \cdot 3^k$. Thus, the density is $8 \cdot 3^{k - 2k - 2} = 8 \cdot 3^{-k-2}$. Summing over $k$, we obtain a density of 
	\[\sum_{k=1}^\infty \frac{8}{3^{k+2}} = \frac{8}{27} \sum_{k=0}^\infty \frac{1}{3^k} = \frac{8}{27} \cdot \frac{1}{1-1/3} = \frac{4}{9}.  \]
	Combining the considerations of good reduction at $2$ and $3$, we conclude that the density of pairs $(a,b) \in \bZ^2$ such that $E_{a,b}$ has good reduction at $2$ and $3$ is $(9/128)(4/9) = 1/32$.

	\subsection{Contributions to the conductor for type-$\operatorname{I}_0^\ast$ primes}
	
	Put $a = p^k a_0$ and $b = p^\ell b_0$. Then our conductor polynomial takes the form 
	\[\C(E_{a,b}) = p^{ \ell} b_0 (p^{2k} a_0^2 - 4 p^{\ell} b_0).\]
	The constraint that $p^7 \nmid \Delta(E_{a,b})$ implies that 
	\[2 \ell + \min\{2k, \ell\} \leq 6.\]
	We also have $k \geq 1, \ell \geq 2$. If $\ell > 2$ then $\min\{2k, \ell\} > 2$, and therefore $2 \ell + \min\{2k, \ell\} > 6$. Hence we must have $\ell = 2$ and $k = 1$. This implies that $\C(E_{a,b})$ is exactly divisible by $p^4$. In this case we note that the conductor $C(E_{a,b})$ is only divisible by $p^2$. \\
	
	Therefore, the congruence information for a prime of Kodaira symbol $\KTI$ is contained in the $\bZ$-module $(\bZ/ p^3 \bZ)^2$. For $(a,b) \in (\bZ/p^3 \bZ)^2$, we have that $E_{a,b}(\bZ/p^3 \bZ)$ has Kodaira symbol $\KTI$  if and only if $a \equiv 0 \pmod{p}$ and $b \equiv 0 \pmod{p^2}, b \not \equiv 0 \pmod{p^3}$. Thus there are $p^2 (p-1)$ choices for $(a,b) \in (\bZ/p^3 \bZ)^2$, and the relative density is $(p-1)/p^4$.

	\subsection{Contributions to the conductor for type-$\operatorname{III}$ primes}
	
	If a given curve $E_{a,b} \in \E_2$ has Kodaira symbol $\operatorname{III}$ at a prime $p$, then we must have $p | b$ exactly. We may then put $b = pb_0$ where $p \nmid b_0$. Our curve then has the equation
	\[E_{a,b} : y^2 = x^3 + ax^2 + pb_0x.\] 
	If we put $a = p^k a_0$ with $p \nmid a_0$, then our conductor polynomial is equal to 
	\[\C(E_{a,b}) = p b_0 (p^{2k} a_0^2 - 4pb_0).  \]
	We then see that $p^2$ exactly divides the conductor polynomial, and hence exactly divides the conductor $C(E_{a,b})$. Therefore all of the congruence information is contained in the ring $(\bZ/p^2 \bZ)^2$, and we have that $E_{a,b}$ has Kodaira symbol $\KT$ at $p$ if and only if $a \equiv 0 \pmod{p}$ and $b \equiv 0 \pmod{p}, b \not \equiv 0 \pmod{p^2}$. There are $p(p-1)$ such possibilities, and the relative density is $(p-1)/p^3$. 
	
	\subsection{Contributions to the conductor for type-$\operatorname{III}^\ast$ primes}
	
	If a given curve $E_{a,b} \in \E_2$ has Kodaira symbol $\operatorname{III}^\ast$ at a prime $p$, then we must have $p^3 | b$ exactly. We may then put $b = p^3 b_0$ where $p \nmid b_0$. Our curve then has the equation
	\[E_{a,b} : y^2 = x^3 + ax^2 + p^3 b_0x.\] 
	If we put $a = p^k a_0$ with $p \nmid a_0$, then our conductor polynomial is equal to 
	\[\C(E_{a,b}) = p^3 b_0 (p^{2k} a_0^2 - 4p^3 b_0).  \]
	Since $k \geq 2$ by Table 1 of \cite{SSW}, it follows that $p^6$ exactly divides the conductor polynomial. Note that in this case the conductor $C(E_{a,b})$ is divisible by $p^2$. Thus the congruence condition is contained in the $\bZ$-module $(\bZ/p^4 \bZ)^2$, and $E_{a,b}(\bZ/p^4 \bZ)$ has Kodaira symbol $\KTS$ if and only if $a \equiv 0 \pmod{p^2}$ and $b \equiv 0 \pmod{p^3}, b \not \equiv 0 \pmod{p^4}$. Thus there are $p^2 (p-1)$ choices for $(a,b)$, and the relative density is $(p-1)/p^6$. 
	
	\subsection{Contribution to the conductor for semi-stable primes} 
	
	Recall that for an elliptic curve $E/\bQ$, $E$ has multiplicative bad reduction at $p$ if and only if $E$ has semi-stable bad reduction at $p$. Now the exponent $k \geq 1$ can be arbitrarily large. Over the $\bZ$-module $(\bZ/p^{k+1} \bZ)^2$ a pair $(a,b) \in (\bZ/p^{k+1} \bZ)^2$ corresponds to an elliptic curve $E_{a,b}$ having semi-stable bad reduction at $p$ if and only if $p$ divides exactly one of $b$ and $c = a^2 - 4b$. If $b \equiv 0 \pmod{p^k}, b \not \equiv 0 \pmod{p^{k+1}}$ then we must have $a \not \equiv 0 \pmod{p}$. Thus there are $p^{k}(p-1)$ choices for $a$ and $p-1$ choices for $b$. If $c \equiv 0 \pmod{p^{k}}, c \not \equiv 0 \pmod{p^{k+1}}$ then we cannot have $a \equiv 0 \pmod{p}$, since otherwise $b \equiv 0 \pmod{p}$ also which is not allowed. Therefore for any $a$ co-prime to $p$ we may choose a unique $b \pmod{p^{k+1}}$ such that $c \equiv 0 \pmod{p^k}$ and $c \not \equiv 0 \pmod{p^{k+1}}$. There are again $p^{k}(p-1)^2$ such choices. Moreover it is clear that the two sets of possibilities are disjoint. It follows that there are $2p^{k}(p-1)^2$ possibilities, and the relative density is $2(p-1)^2/p^{k+2}$. 

\section{The family $\E_2$ ordered by conductor polynomial}
\label{condpolysec} 

Recall that our family $\E$ is given by the equation (\ref{main fam}). Analogous to our construction \cite{TX}, we introduce the \emph{conductor polynomial} of $E_{a,b}$ as 
	\begin{equation} \label{condpoly} \C(E_{a,b}) = b(a^2 - 4b), c = a^2 - 4b. \end{equation}

Let $A_\infty(Z)$ be the Lebesgue measure of the set 
\[\A(Z) = \{(x,y) \in \bR^2 : |y(x^2 - y)| \leq Z\}.\]
The region $\A(Z)$ has four long cuspidal regions, defined by $y = 0$ and $x^2 = y$ respectively. Nevertheless, we compare the areas of $\A(Z)$ and $\A^\ast(Z)$ defined below: 
\begin{equation} \label{area} \A^\ast(Z) = \{(x,y) \in \bR^2 : |y(x^2 - y)| \leq Z, |x^2 - y|, |y| \geq 4\}
\end{equation}
and show that they have comparable areas for $Z$ large. We note that the set $\A^\ast(Z)$ is in fact bounded. To see this, note that $|x^2 - y|, |y| \geq 4$ implies that $|y|, |x^2 - y| \leq Z/4$. Hence $\A(Z)$ is contained in the region $\{(x,y) \in \bR^2 : |y|, |x^2 - y| \leq Z/4\}$ which is clearly bounded. \\

\begin{lemma} \label{Acomp} For a Lebesgue measurable set $\X \subset \bR^2$, put $m(\X)$ for its Lebesgue measure. We then have
\[m(\A(Z) \setminus \A^\ast(Z)) = O \left(Z^{\frac{1}{2}} \right). \]
\end{lemma}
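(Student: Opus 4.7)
The plan is to decompose the difference $\A(Z) \setminus \A^\ast(Z)$ according to which of the two lower bounds $|y| \geq 4$ or $|x^2-y| \geq 4$ is violated, and then estimate each piece by a one-dimensional slicing argument.

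First I would write $\A(Z) \setminus \A^\ast(Z) \subseteq \R_1(Z) \cup \R_2(Z)$, where
\[
\R_1(Z) = \{(x,y) \in \bR^2 : |y(x^2-y)| \leq Z, \; |y| < 4\}, \quad \R_2(Z) = \{(x,y) \in \bR^2 : |y(x^2-y)| \leq Z, \; |x^2-y| < 4\}.
\]
It suffices to show $m(\R_j(Z)) = O(Z^{1/2})$ for $j = 1, 2$.

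For $\R_1$, I would slice by the $y$-coordinate. For fixed $y$ with $0 < |y| < 4$, the defining inequality $|y(x^2-y)| \leq Z$ reads $x^2 \in [y - Z/|y|, \, y + Z/|y|]$. For $Z$ large (say $Z \geq 64$) the lower endpoint is negative while the upper endpoint is at most $2Z/|y|$, so the $x$-slice is an interval of length at most $2\sqrt{2Z/|y|}$. Integrating gives
\[
m(\R_1(Z)) \leq \int_{-4}^{4} 2\sqrt{\tfrac{2Z}{|y|}}\, dy \;=\; 4\sqrt{2Z}\int_{0}^{4} y^{-1/2}\, dy \;=\; 16\sqrt{2Z},
\]
which is $O(Z^{1/2})$ as desired.

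For $\R_2$, I would perform the change of variables $(x,y) \mapsto (x,c)$ with $c = x^2 - y$. Since we hold $x$ fixed in each fibre, the Jacobian determinant is $1$ and the region transforms into $\{(x,c) : |(x^2-c)\,c| \leq Z,\; |c| < 4\}$. Slicing now by the $c$-coordinate, the analysis is identical to that for $\R_1$ with $c$ playing the role of $y$, and the same estimate yields $m(\R_2(Z)) \leq 16\sqrt{2Z} = O(Z^{1/2})$. Combining the two bounds gives the lemma.

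The argument is essentially routine once one recognises that the two cuspidal directions along $y=0$ and along the parabola $y = x^2$ are interchanged by the volume-preserving involution $(x,y) \mapsto (x, x^2-y)$; the main point is simply to verify that the $y^{-1/2}$ (respectively $c^{-1/2}$) singularity at the origin is integrable, which it is. There is no serious obstacle here — one only needs to take a little care that for $|y|$ (or $|c|$) small the set $\{x^2 \leq y + Z/|y|\}$ is an interval, not an annulus, so that its length is genuinely bounded by $2\sqrt{2Z/|y|}$.
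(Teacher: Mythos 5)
Your proof is correct. Note, however, that the paper does not actually prove this lemma in situ: it simply cites Proposition~3.12 of \cite{TX}, so there is no internal argument to compare against. Your slicing argument is the natural self-contained one: the decomposition into the two cuspidal strips $|y|<4$ and $|x^2-y|<4$, the observation that the measure-preserving involution $(x,y)\mapsto(x,x^2-y)$ swaps the two cusps, and the integrability of the $|y|^{-1/2}$ singularity together give the bound $O(Z^{1/2})$, matching the error term the paper uses downstream. Two small points you have already implicitly handled but are worth keeping explicit: the slice at $y=0$ is all of $\bR$, but this is a null set of $y$-values and does not affect the Tonelli integral; and the bound $y+Z/|y|\le 2Z/|y|$ needs $Z$ bounded below (your $Z\ge 64$ is more than enough), which is harmless for an asymptotic statement.
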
 

\begin{proof} See \cite{TX}, Proposition 3.12. 
\end{proof} 

We put $A_\infty(Z) = m(\A(Z))$. By Lemma \ref{Acomp}, the area of the bounded region $\A^\ast(Z)$ is comparable to $A_\infty(Z)$, so it suffices to compute $A_\infty(Z)$ which scales homogeneously with the parameter $Z$. In particular, we have 
\[A_\infty(Z) = A_\infty(1) Z^{\frac{3}{4}}.\]

To compute $A_\infty(Z)$, we note the symmetry about the $y$-axis and thus assume without loss of generality that $x > 0$. Call the corresponding region $\A_+(Z)$. We then partition $\A_+(Z)$ into three regions:
\begin{align*} \A_1 & = \left \{(x,y) \in \A^\ast(Z) : |x| \leq \sqrt{2} Z^{\frac{1}{4}} \right\}, \\
\A_2 & = \left\{(x,y) \in \A^\ast(Z) : |x| > \sqrt{2}Z^{\frac{1}{4}} , y >  \sqrt{Z}\right\}, \\
\A_3 & = \left\{(x,y) \in \A^\ast(Z): |x| > \sqrt{2} Z^{\frac{1}{4}} , y \leq \sqrt{Z} \right\}.
\end{align*}
Clearly, $\A_1, \A_2, \A_3$ partition $\A_+(Z)$. \\

We now compute $m(\A_1)$. To do this, for fixed $0 < x < \sqrt{2}Z^{\frac{1}{4}}$ we have $y$ lies in the interval defined by the equation $y(x^2 - y) = - Z$, giving 

\[y \in \left[ \frac{x^2 - \sqrt{x^4 + 4Z}}{2}, \frac{x^2 + \sqrt{x^4 + 4Z}}{2} \right]. \]
it follows that 
\begin{equation} m(\A_1) = \int_{0}^{\sqrt{2}Z^{\frac{1}{4}}}  \int_{\frac{x^2 - \sqrt{x^4 + 4Z}}{2}}^{\frac{x^2 + \sqrt{x^4 + 4Z}}{2}} dy dx = \int_{0}^{\sqrt{2}Z^{\frac{1}{4}}} \sqrt{x^4 + 4Z} dx.
\end{equation}
As in \cite{TX}, we will compute the above area using elliptic integrals. Make the substitution $x = \sqrt{2} Z^{\frac{1}{4}} z$ and $dx = \sqrt{2} Z^{\frac{1}{4}} dz$ gives 
\[m(\A_1) = 2 \sqrt{2} Z^{\frac{3}{4}} \int_0^1 \sqrt{z^4 + 1} dz = \frac{2 \sqrt{2} Z^{\frac{3}{4}}}{3} \left(\sqrt{2} + \frac{\Gamma(1/4)^2}{4 \sqrt{\pi}} \right).\]
Next we compute $m(\A_2)$. Similar to the considerations above we find that 
\[m(\A_2) = \int_{\sqrt{2} Z^{\frac{1}{4}}}^\infty \int_{\frac{x^2 - \sqrt{x^4 - 4Z}}{2}}^{\frac{x^2 - \sqrt{x^4 + 4Z}}{2}} dy dx = \frac{1}{2} \int_{\sqrt{2} Z^{\frac{1}{4}}}^\infty \left(\sqrt{x^4 + 4Z} - \sqrt{x^4 - 4Z} \right)dz.\]
Again making the substitution $x = \sqrt{2} Z^{\frac{1}{4}} z, dx = \sqrt{2} Z^{\frac{1}{4}} dz$ we find that 
\[m(\A_2) =  \sqrt{2} Z^{\frac{3}{4}} \int_1^\infty \left(\sqrt{z^4 + 1} - \sqrt{z^4 - 1} \right) dz.\]
To evaluate this integral we apply integration by parts and obtain
\begin{align*} \int_1^{\infty} \left(\sqrt{z^4 + 1} - \sqrt{z^4 - 1} \right)dz & = \frac{1}{3} \left[z \sqrt{z^4 + 1} - z \sqrt{z^4 - 1} \right]_1^\infty + \frac{2}{3} \int_1^\infty \frac{dz}{\sqrt{z^4 + 1}} + \frac{2}{3} \int_1^\infty \frac{dz}{\sqrt{z^4 - 1}} \\
& =  -  \frac{ \sqrt{2}}{3} + \frac{1}{3} \left(\frac{(1 + \sqrt{2}) \Gamma(1/4)^2}{4 \sqrt{\pi}} \right)  \\
& = \frac{1}{3} \left(-\sqrt{2} + \frac{(1 + \sqrt{2}) \Gamma(1/4)^2}{4 \sqrt{\pi}} \right).
\end{align*} 
This implies that 
\[m(\A_2) = \frac{ \sqrt{2} Z^{\frac{3}{4}}}{3}  \left(-\sqrt{2} + \frac{(1 + \sqrt{2}) \Gamma(1/4)^2}{4 \sqrt{\pi}} \right).\]
A similar calculation reveal that 
\[m(\A_2) = m(\A_3) =\frac{ \sqrt{2} Z^{\frac{3}{4}}}{3} \left(-\sqrt{2} + \frac{(1 + \sqrt{2}) \Gamma(1/4)^2}{4 \sqrt{\pi}} \right). \]
It follows that 
\begin{align*} m(\A_+^\ast(Z)) & = m(\A_1) + m(\A_2) + m(\A_3) \\
& = \frac{2\sqrt{2}Z^{\frac{3}{4}}}{3} \left(\sqrt{2} + \frac{\Gamma(1/4)^2}{4 \sqrt{\pi}} - \sqrt{2} + \frac{(1 + \sqrt{2}) \Gamma(1/4)^2}{4 \sqrt{\pi}} \right) \\
& = \frac{(1 + \sqrt{2}) \Gamma(1/4)^2}{3 \sqrt{\pi}}
\end{align*}
This gives
\begin{equation} A_\infty(Z) = \frac{2( 1+  \sqrt{2}) \Gamma(1/4)^2}{3 \sqrt{\pi}} Z^{\frac{3}{4}} + O \left(Z^{\frac{1}{2}} \right).
\end{equation}
	
We may use a refined version of Davenport's lemma, due to Barroero and Widmer \cite{BW}. Before we state the proposition, we introduce some notation in \cite{BW}. Here we consider, as in \cite{BW}, a parametrized family $\Z \subset \bR^{m+n}$ of subsets $\Z_T = \{\Bx \in \bR^n: (T, \Bx) \in \Z\}$. We consider a lattice $\Lambda \subset \bR^n$. The goal of the following proposition is to estimate the cardinality $|\Lambda \cap \Z_T|$ as the parameter $T$ ranges over an infinite set. They obtained the following, the form of which was stated in \cite{SSW}: 

\begin{proposition} \label{dav-BW} Let $m$ and $n$ be positive integers and let $\Lambda \subset \bR^n$ be a lattice. Denote the successive minima of $\Lambda$ by $\lambda_i, i = 1, \cdots, n$. Let $\Z \subset \bR^n$ be a definable family in an $o$-minimal structure, and suppose the fibres $\Z_T$ are bounded. Then there exists a positive number $c_\Z$, depending only on $\Z$, such that 
\[\left \lvert \# (\Z_T \cap \Lambda) - \frac{\Vol(\Z_T)}{\det(\Lambda)} \right \rvert \leq c_\Z \sum_{j=0}^{n-1} \frac{V_j(\Z_T)}{\lambda_1 \cdots \lambda_j} \]
where $V_j(\Z_T)$ is the sum of the $j$-dimensional volumes of the orthogonal projections of $\Z_T$ onto every $j$-dimensional coordinate subspace of $\bR^n$. 
\end{proposition}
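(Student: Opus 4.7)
\textbf{Proof proposal for Proposition \ref{dav-BW}.} The plan is to reduce to the case $\Lambda = \bZ^n$ via a Minkowski-reduced basis and then apply a uniform Davenport-type counting bound valid for definable families in an o-minimal structure. First, by Minkowski's second theorem one can choose a basis $\Bb_1, \ldots, \Bb_n$ of $\Lambda$ whose lengths satisfy $|\Bb_i| \asymp \lambda_i$, with implied constants depending only on $n$. Let $M$ be the matrix whose columns are the $\Bb_i$, so that $\Lambda = M \bZ^n$ and $|\det M| = \det(\Lambda)$. Counting $\Lambda \cap \Z_T$ is then equivalent to counting $\bZ^n \cap M^{-1} \Z_T$, and $\Vol(M^{-1} \Z_T) = \Vol(\Z_T)/\det(\Lambda)$.

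Next, one establishes the following uniform lemma: if $\Y \subset \bR^{m+n}$ is a definable family in an o-minimal structure with bounded fibres $\Y_S$, then there exists $c_\Y > 0$ such that
\[
\left\lvert \# (\bZ^n \cap \Y_S) - \Vol(\Y_S) \right\rvert \leq c_\Y \sum_{j=0}^{n-1} V_j(\Y_S)
\]
uniformly in $S$. This is the key innovation over classical Davenport: the o-minimality hypothesis provides a uniform cell decomposition of $\Y$ such that each fibre $\Y_S$ is a bounded union of cells, with the number of cells and the combinatorial type of their projections bounded independently of $S$. On each cell one applies the standard slice-and-count argument (fixing $n-1$ coordinates and counting integer points on the resulting one-dimensional fibre, which is a bounded union of intervals with uniformly bounded number of components by o-minimality), then sums over slices using the volumes of projections.

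To apply this to our situation, one takes the new definable family to be $\Y = \{(T, M, \Bx) : (T, M\Bx) \in \Z\}$ parametrized by $(T,M)$, applied at $M^{-1} \Z_T$. The uniform bound yields
\[
\left\lvert \# (\bZ^n \cap M^{-1} \Z_T) - \frac{\Vol(\Z_T)}{\det(\Lambda)} \right\rvert \leq c \sum_{j=0}^{n-1} V_j(M^{-1} \Z_T),
\]
and the final step is to show that each projection volume satisfies $V_j(M^{-1} \Z_T) \ll V_j(\Z_T)/(\lambda_1 \cdots \lambda_j)$. This follows because, after reordering the Minkowski basis so that $|\Bb_1| \leq \cdots \leq |\Bb_n|$, the inverse map $M^{-1}$ contracts the $j$-dimensional coordinate projection by a factor of order $\lambda_1 \cdots \lambda_j$ in the worst case; a careful application of the Cauchy-Binet formula, combined with the fact that a Minkowski-reduced basis is quasi-orthogonal, makes this precise.

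The main obstacle is the uniform o-minimal Davenport lemma in the middle step: proving the uniform bound on the number of cells and the uniform control of one-dimensional slices (i.e., a uniform bound on the number of connected components of the fibres of a definable family) genuinely requires the cell decomposition theorem for o-minimal structures rather than classical convex-geometric arguments. Once that tool is in place, the change-of-basis reduction and the volume comparison are routine.
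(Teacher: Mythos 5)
The paper does not prove this proposition at all: it is quoted verbatim (in the form stated in \cite{SSW}) from Barroero--Widmer \cite{BW}, so there is no internal proof to compare against. That said, your sketch does follow the actual strategy of \cite{BW}: they too first prove the $\Lambda = \bZ^n$ case by running Davenport's inductive slicing argument, with the classical hypothesis on boundary intersections replaced by the uniform bound on the number of connected components of fibres of a definable family (a consequence of cell decomposition), and then pass to a general lattice via a reduced basis $\Bb_1,\dots,\Bb_n$ with $|\Bb_i|\asymp\lambda_i$ and a Cauchy--Binet comparison of projection volumes. So as a blind reconstruction you have identified the right ingredients.

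Be aware, however, that as written your argument is a plan rather than a proof: the two steps you label as "the key innovation" and "routine" are precisely the entire content of the theorem. The uniform o-minimal Davenport lemma requires not just cell decomposition but the uniform finiteness theorem for definable families (the number of connected components of $\Y_S$ is bounded independently of $S$), applied to every one-dimensional slice in every coordinate direction at every stage of the induction; this is where the constant $c_\Z$ actually comes from. And the comparison $V_j(M^{-1}\Z_T)\ll V_j(\Z_T)/(\lambda_1\cdots\lambda_j)$ is not a generic contraction statement: you need the transference-type bound that the dual basis vectors (the rows of $M^{-1}$) satisfy $|\Bb_i^\ast|\ll 1/\lambda_i$ for a reduced basis, together with $\lambda_{i_1}\cdots\lambda_{i_j}\ge\lambda_1\cdots\lambda_j$ for $i_1<\cdots<i_j$, and the fact that the projection of $\Z_T$ onto an arbitrary $j$-plane is controlled (up to a constant depending only on $n$) by the sum of its coordinate projections. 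None of this is wrong, but since the proposition is an external citation here, the honest course is either to cite \cite{BW} as the paper does or to supply these two steps in full.
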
 

Suppose now that we are given a set $\S \subset \bZ^2$ defined by congruence conditions modulo some integer $n > 0$. Then we may break $\S$ up into a union of $n^2 \nu(\S)$ translates of the lattice $n \bZ \times n \bZ$, where $\nu(\S)$ denotes the volume of the closure of $\S$ in $\hat{\bZ}^2$. Applying Proposition \ref{dav-BW} to each of these translates and summing gives us the following result: 

\begin{proposition} \label{fincong} Let $\S \subset \bZ^2$ be a set of pairs $(a,b)$ defined by congruence conditions on $a,b$ modulo some positive integer $n$. Then we have 
\[\#\{(a,b) \in \S : |\C(E_{a,b})| \leq X, |b|, |a^2 - 4b| \geq 4\} = \frac{\sqrt{2}}{ 2} \nu(\S) A_\infty(X) + O_\ep \left(n \nu(\S) X^{\frac{1}{2} + \ep} \right).\]
\end{proposition}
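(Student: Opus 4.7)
The plan is to decompose $\S$ into residue classes modulo $n$, handle each translate by slicing on the $a$-coordinate, and apply the one-dimensional form of Proposition~\ref{dav-BW} to each slice. Write
\[\R_X = \{(a,b)\in\bR^2 : |b(a^2-4b)|\leq X,\ |b|\geq 4,\ |a^2-4b|\geq 4\}\]
and $\S = \bigsqcup_{i=1}^{N} L_i$ with $L_i = (a_i,b_i)+n\bZ^2$ and $N=n^2\nu(\S)$.

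First I would compute $\Vol(\R_X)$ via the linear change of variables $(x,y)=(a,4b)$, whose Jacobian is $4$. Up to a set of volume $O(X^{1/2})$ coming from the stronger condition $|y|\geq 16$ versus $|y|\geq 4$, this identifies $4\R_X$ with $\A^\ast(4X)$, and Lemma~\ref{Acomp} together with the homogeneity $A_\infty(Z)=A_\infty(1)Z^{3/4}$ gives
\[\Vol(\R_X) = \tfrac{1}{4}\cdot 4^{3/4}\, A_\infty(X) + O(X^{1/2}) = \tfrac{\sqrt{2}}{2}A_\infty(X)+O(X^{1/2}).\]

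For each translate $L_i$ I then slice on $a$: for each integer $a$ the set $I_a=\{b\in\bR:(a,b)\in\R_X\}$ is a union of at most four intervals (from the sign choices on $b$ and on $a^2-4b$), and one-dimensional Davenport gives $\#\{b\in b_i+n\bZ : b\in I_a\} = |I_a|/n + O(1)$. I then compare $\sum_{a\equiv a_i\pmod n}|I_a|$ with $(1/n)\int|I_a|\,da = \Vol(\R_X)/n$, whose difference is bounded by the total variation $\mathrm{TV}(|I_a|)$. Using the explicit formulas $|I_a|=\tfrac14\sqrt{a^4+16X}+O(1)$ for $|a|<2X^{1/4}$ and $|I_a|=\tfrac14(\sqrt{a^4+16X}-\sqrt{a^4-16X})+O(1)=O(X/a^2)$ for $|a|\geq 2X^{1/4}$, one sees that $|I_a|$ is essentially unimodal in $|a|$ with peak $O(X^{1/2})$ near $|a|\sim X^{1/4}$ and support contained in $|a|\leq 2X^{1/2}$, so $\mathrm{TV}(|I_a|)=O(X^{1/2})$. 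The $a$-projection of $\R_X$ also has length $O(X^{1/2})$, so the number of nonempty slices per residue class is $O(X^{1/2}/n+1)$. These combine to give
\[\#(\R_X\cap L_i) = \frac{\Vol(\R_X)}{n^2} + O\!\left(\frac{X^{1/2}}{n}+1\right),\]
and summing over the $N$ translates, together with the volume computation, produces the claimed asymptotic with total error $O_\ep(n\nu(\S)X^{1/2+\ep})$.

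The main obstacle is the total-variation bound. A direct two-dimensional application of Proposition~\ref{dav-BW} to $\R_X$ would use the sum of lengths of its coordinate projections, but the projection of $\R_X$ onto the $b$-axis has length $\asymp X/4$ because of the long cusp $|b|\leq X/4$ along $a^2\approx 4b$, which would yield only the much weaker error $O(n\nu(\S)X)$. Slicing on $a$ rescues the argument because $|I_a|$ is small precisely in the cuspidal regime, so $\mathrm{TV}(|I_a|)$ picks up only the height of the peak rather than the length of the cusp; carrying this out rigorously requires the regime analysis indicated above.
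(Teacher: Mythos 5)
Your proof is correct, and it is in fact more complete than the route the paper itself indicates. The paper's proof of Proposition \ref{fincong} is a one-line assertion that the result is ``a direct consequence'' of Proposition \ref{dav-BW} applied to each of the $n^2\nu(\S)$ translates of $n\bZ\times n\bZ$, deferring the details to Proposition 3.12 of \cite{TX}. Your observation that a naive two-dimensional application of Proposition \ref{dav-BW} cannot give the stated error is accurate: the orthogonal projection of the region onto the $b$-axis has length $\asymp X$ because of the cusp along $4b\approx a^2$, so the term $V_1(\Z_T)/\lambda_1$ is of size $X/n$ per translate, yielding only $O(n\nu(\S)X)$ in total. Your remedy --- slicing on $a$, counting $b$ in each fibre $I_a$ by the one-dimensional Davenport bound, and controlling the sum over $a$ in an arithmetic progression by the total variation of $a\mapsto|I_a|$, which is $O(X^{1/2})$ because $|I_a|$ is $O(X^{1/2})$ and essentially unimodal in $|a|$ --- is exactly the cusp-sensitive argument that is needed here, and is in the spirit of what \cite{TX} carries out; what it buys is that the error picks up only the height of the peak of $|I_a|$ rather than the length of the cuspidal $b$-projection. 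Your leading constant, $\tfrac14\cdot 4^{3/4}=\tfrac{\sqrt2}{2}$ via the substitution $y=4b$ (which converts $|\C(E_{a,b})|\leq X$ into $|y(x^2-y)|\leq 4X$ with $y\equiv 0\pmod 4$), agrees with the paper's. Two cosmetic caveats: the aggregate $O(1)$ terms contribute $O(n^2\nu(\S))$, which is absorbed into $O(n\nu(\S)X^{1/2})$ only for $n\leq X^{1/2}$ (the regime in which the proposition is actually applied in Section \ref{condpolysec}); and one should confirm that the $O(1)$ corrections to $|I_a|$ from excising $|b|<4$ and $|a^2-4b|<4$ do not inflate the total variation beyond $O(X^{1/2})$. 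Both points are routine.
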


\begin{proof} This is a direct consequence of Proposition \ref{dav-BW} and the observation that to count the correct points in $\A(X)$ we need to impose the condition $y \equiv 0 \pmod{4}$, which leads to the leading term $ \sqrt{2}/2$ in the front. See also Proposition 3.12 in \cite{TX}. 
\end{proof}

We now prove the third part of Theorem \ref{MT1}. Let us put $N_m(X)$ be the number of curves $E_{a,b} \in \E_2$ in $\A(X)$ such that $p^2 | a$ and $p^3 | b$ for each $p$ dividing $m$. It follows that 
\begin{align*} & \# \{E_{a,b} : |\C(E_{a,b})| \leq X, |b|, |a^2 - 4b| \geq 4\}  = \sum_{m \geq 1} \mu(m) N_m(X) \\
& = \sum_{m=1}^{X^\delta} \mu(m) \frac{A_\infty(X)}{8 \sqrt{2}} + \sum_{m=1}^{X^\delta} O \left(m X^{1/2 + \ep} \right) + O \left(\sum_{\substack{m \geq X^\delta \\  m \text{ square-free}}} N_m(X) \right) \\
& = \frac{A_\infty(X)}{8 \sqrt{2}} \prod_{p} \left(1 - \frac{1}{p^6} \right) + O \left(X^{\frac{1}{2} + 2 \delta + \ep} \right) + O \left( \sum_{X^\delta \leq m \leq X^{1/6}} \frac{X^{3/4 + \ep}}{m^6} \right). 
\end{align*}
Here the last estimate comes from the fact that by using the isogeny $\phi$ we may assume that $|b| \leq X^{1/2}$, and hence $m^3$ divides $|b|$ implies that $|m| \leq X^{1/6}$. We can optimize $\delta$ by solving
\[\frac{1}{2} + 2 \delta = \frac{3}{4} - 6 \delta \]
which gives 
\[\delta = \frac{1}{32}.\]
This is sufficient for the proof of the third part of Theorem \ref{MT1}.

\section{Main counting theorems assuming uniformity estimates} 
\label{countingthms} 

We follow the same approach as in \cite{SSW}, and note that our uniformity estimates given by Theorem \ref{cubefree uniform} imply 
\begin{align} \label{SSW44} \#\{E \in \E_2^\ast : C(E) < X \} & = \sum_{n \geq 1} \# \{E \in \E_2^\ast : \ind(E) = n, \C(E) < nX\} \\
& = \sum_{n,q \geq 1} \mu(q) \# \{E \in \E_2^\ast : nq | \ind(E), \C(E) < nX\} \notag \\
& = \sum_{\substack{n, q \geq 1 \\ nq < X^\delta}} \mu(q) \# \{E \in \E_2^\ast : nq | \ind(E), \C(E) < nX\} + O \left(X^{\frac{3}{4} - \delta^\prime} \right). \notag
\end{align}
The last line follows from our uniformity estimates. \\

We introduce some notation, following \cite{SSW}. For each prime $p \geq 5$ let $\Sigma_p$ be a non-empty subset of possible reduction types. We say that $\Sigma = (\Sigma_p)_p$ is a \emph{collection of reduction types}. We will assume that each $\Sigma_p$ contains the good and multiplicative reduction types for all $p \geq 5$. \\

We then perform another inclusion-exclusion sieve to evaluate each summand on the right-hand side of the expression above. For each prime $p$ let 
\[\chi_{\Sigma_p, nq} : \bZ_p^2 \rightarrow \bR\] 
be the characteristic function of the set of all $(a,b) \in \bZ_p^2$ that satisfy the reduction type specified by $\Sigma_p$ and satisfy $nq | \ind(E_{a,b})$. Let us put $ \chi_p = 1 - \chi_{\Sigma_p, nq}$ and define 
\[\chi_k := \prod_{p \mid k} \chi_p\]
for square-free integers $k$. Then we have
\begin{equation} \label{chip} \prod_p \chi_{\Sigma_p, nq}(a,b) = \sum_k \mu(k) \chi_k(a,b)
\end{equation}
for every $(a,b) \in \bZ^2$. Put $\nu_\ast(nq, \Sigma)$ to be the product over all primes $p$ of the integral of $\chi_{\Sigma_p, nq}$ over $\bZ_p^2$. Then for $nq < X^\delta$ we have 
\begin{align*} \# \{E \in \E_2^\ast : nq \mid \ind(E), \C(E) < nX\} & = \sum_{\substack{(a,b) \in \bZ^2 \\ 0 < |\C(E_{a,b})| < nX}} \sum_{k \geq 1} \mu(k) \chi_k(a,b) \\
& = \sum_{\substack{(a,b) \in \bZ^2 \\ 0 < |\C(E_{a,b})| < nX}} \sum_{k=1}^{X^{4\delta}} \mu(k) \chi_k(a,b) + O \left(X^{\frac{3}{4} - \kappa} \right) \\
& = A_\infty(nX) \nu_\ast(nq, \Sigma) + O_\ep \left(X^{\frac{1}{2} + \ep} + X^{\frac{3}{4} - \kappa} \right)
\end{align*}
where $\nu_\ast(nq, \Sigma)$ is the product over all primes $p$ of the $p$-adic integral of $\chi_{\Sigma_p, nq}$. For each $n$, put $\lambda_i(n, \Sigma)$ for the volume of the closure in $\hat{\bZ}^2$ of the set of all $(a,b) \in \bZ^2$ such that $E_{a,b}$ belongs to $\G = \E_2(\Sigma)$ and $E_{a,b}$ has index $n$. Returning to (\ref{SSW44}), we obtain
\begin{align*} \# \{E \in \G : C(E) < X\} & = A_\infty(1) X^{\frac{3}{4}} \sum_{\substack{n,q \geq 1 \\ nq < X^\delta}} \mu(q) n^{\frac{3}{4}} \nu_\ast(nq, \Sigma) + o \left(X^{\frac{3}{4}} \right) \\
& = A_\infty(1) X^{\frac{3}{4}} \sum_{n \geq 1} n^{\frac{3}{4}} \lambda_\ast(n, \Sigma),
\end{align*}
where the final equality follows by reversing the inclusion-exclusion sieve in (\ref{SSW44}) (see (44) in \cite{SSW}). \\

For each prime $p \geq 5$ and integer $k \geq 0$, put $\ol{\nu}_\ast(p^k, \Sigma)$ for the $p$-adic density of the set of all $(a,b) \in \bZ^2$ such that $E_{a,b} \in \E_\ast(\Sigma)$ and $\ind_p(E_{a,b}) = p^k$. The constant $\lambda_\ast(n, \Sigma)$ is a product over all $p$ of local densities:
\begin{align*} \lambda_\ast(n, \Sigma) & = \prod_{\substack{ p \geq 5 \\ p \nmid n}} \ol{\nu}_\ast(p^0, \Sigma) \prod_{\substack{p^k \vert \vert n \\ k \geq 1}} \ol{\nu}_\ast(p^k, \Sigma) \\
& = \prod_{p \geq 5} \ol{\nu}_\ast(p^0, \Sigma) \prod_{\substack{p^k \vert \vert n \\ k \geq 1}} \frac{\ol{\nu}_\ast(p^k, \Sigma)}{\ol{\nu}_\ast(p^0, \Sigma)} 
\end{align*}
It follows that $\lambda_\ast(n, \Sigma)$ is a multiplicative function of $n$, and hence 
\begin{align*} \sum_{n \geq 1} n^{\frac{3}{4}} \lambda_\ast(n, \Sigma) & = \prod_{p \geq 5} \ol{\nu}_\ast(p^0, \Sigma) \prod_p \left(\sum_{k=0}^\infty p^{\frac{3k}{4}} \frac{\ol{\nu}_\ast (p^k, \Sigma)}{\ol{\nu}_\ast(p^0, \Sigma)} \right) \\
& = \prod_{p \geq 5} \left(\sum_{k=0}^\infty p^{\frac{3k}{4}} \ol{\nu}_\ast (p^k, \Sigma) \right).
\end{align*}

The computation of $\ol{\nu}_\ast(p^k)$ then follows from the calculations in Section \ref{Kod symbols}. In the case of $\E_2^\ast$, we have that $p \mid \ind(E_{a,b})$ if and only if $p$ is semi-stable. Further, our imposition implies that $\ind(E_{a,b})$ is square-free in this case, so $k \leq 1$. We thus obtain the density 
\begin{equation} \frac{(p-1)^2}{p^2} + \frac{p-1}{p^3} + \frac{2(p-1)^2}{p^3} + p^{\frac{3}{4}} \frac{2(p-1)^2}{p^4} = 1 - \frac{2p-1}{p^3} + \frac{2(p-1)^2}{p^{13/4}} 
\end{equation} 
For the general case, we have that the bulk of the contribution comes from the semi-stable primes. The density is then seen to be
\begin{align} \sum_{k=1}^\infty p^{\frac{3(k-1)}{4}} \frac{2(p-1)^2}{p^{k+2}} & = \frac{2(p-1)^2}{p^3} + \sum_{k=2}^\infty p^{\frac{3(k-1)}{4}} \frac{2(p-1)^2}{p^{k+2}} \\ \notag
& =  \frac{2(p-1)^2}{p^3} + \frac{2(p-1)^2p^{\frac{3}{4}}}{p^4} \sum_{k=0}^\infty p^{\frac{3k}{4}} p^{-k} \\ \notag 
& = \frac{2(p-1)^2}{p^3} + \frac{2(p-1)^2}{p^{13/4}} \frac{p^{1/4}}{p^{1/4} - 1} \\ \notag
& = \frac{2(p-1)^2}{p^3} + \frac{2(p-1)^2}{p^3(p^{1/4} - 1)}. \notag
\end{align} 
The contribution from the other Kodaira symbols occurs for $k = 0, 2, 4$. Combined with the contribution. This gives the total contribution 
\begin{align} & \frac{(p-1)^2}{p^2} + \frac{p-1}{p^3} + p^{\frac{3}{2}} \frac{p-1}{p^4} + p^3 \frac{p-1}{p^6} + \frac{2(p-1)^2}{p^3} + \frac{2(p-1)^2}{p^3(p^{1/4} - 1)} \\ \notag
& = 1 - \frac{2p-1}{p^2} + \frac{p-1}{p^3} + \frac{p-1}{p^3} + p^{\frac{3}{2}} \frac{p-1}{p^4} + \frac{2p^2 - 4p + 2}{p^3} + \frac{2(p-1)^2}{p^3(p^{1/4} - 1)} \\ \notag
& = 1 - \frac{2p^2 - p - 2p + 2  - 2p^2 + 4p - 2}{p^3} + p^{\frac{3}{2}} \frac{p-1}{p^4} + \frac{2(p-1)^2}{p^3(p^{1/4} - 1)} \\ \notag
& = 1 + \frac{1}{p^2} + p^{\frac{3}{2}} \frac{p-1}{p^4} + \frac{2(p-1)^2}{p^3(p^{1/4} - 1)}. \notag \end{align}
This suffices for the proof of Theorem \ref{MT1}.

	\section{Counting curves with cube-free conductor polynomial}
	\label{cube-free}
	
	In this section we prove the necessary uniformity estimates in the sieve given in Section \ref{countingthms} in the case when the conductor polynomial $\C(E_{a,b})$ is cube-free. \\
		
	In order to take advantage of these results,  we note that $\C(E_{a,b})$ being cube-free implies the curve $E_{a,b}$ has no primes of Kodaira symbol $\KTI$ and $\KTS$. Let $P = p_1 \cdots p_k$ be the product of a finite number of primes, and we shall restrict our attention to those curves $E \in \E_2^\ast$ which have Kodaira symbol $\KT$ at each of the primes dividing $P$, and no other primes. We then have that $a = Pu, b = Pv$ for some $u,v \in \bZ$ and
\[\C(E_{a,b}) = P^2 v\left(Pu^2 - 4v\right)\]
Further, the integers $v$ and $w = Pu^2 - 4v$ are co-prime to $P$. Our cube-free condition then implies we that we may express $v, w$ in the form: 

\begin{equation} \label{sqf1} v = v_0 v_1^2, \gcd(v_0, v_1) = 1, v_0, v_1 \text{ square-free}\end{equation}
and
\begin{equation} \label{sqf2} w = Pu^2 - 4v = w_0 w_1^2, \gcd(w_0, w_1) = 1, w_0, w_1 \text{ square-free}.\end{equation} 
Thus we obtain a quadratic curve over $\bP^2$ defined by the equation
\begin{equation} \label{quad form} Pu^2 = w_0 w_1^2 + 4v_0 v_1^2.
\end{equation}
We then see that the conductor is equal to
\[C(E_{a,b}) = P^2 |v_0 v_1 w_0 w_1|\]
and
\begin{equation} \label{cubefree index} \ind(E_{a,b}) = \frac{P^2 |v_0 v_1^2 w_0 w_1^2|}{P^2 |v_0 v_1 w_0 w_1|} = |v_1 w_1|.
\end{equation}
We now prove Theorem \ref{cubefree uniform}. The proof will follow from Theorem \ref{uniform 1} below. \\

We give some further preliminaries before stating Theorem \ref{uniform 1}. Our bound on the conductor is equivalent to 
\begin{equation} \label{quad bound} |v_0 v_1 w_0 w_1| \leq XP^{-2} = Z,
\end{equation}
say. We then restrict $v_0, v_1, v_0, v_1$ into dyadic boxes 
\begin{equation} \label{dya range} |v_0| \asymp T_1, |w_0| \asymp T_2, |v_1| \asymp T_3, |w_1| \asymp T_4.\end{equation} 
Put
\begin{equation} \label{expdef} t_i = \frac{\log T_i}{\log Z},
\end{equation}
and by further dividing into dyadic ranges and considering (\ref{quad bound}), we may suppose that
\begin{equation} \label{ti sum}  t_1 + t_2 + t_3 + t_4 = 1.\end{equation} 
We will be concerned with estimating
\begin{equation} \label{dya box} R(T_1, T_2, T_3, T_4) =  \{E_{a,b} : y^2 = x(x^2+ax +b), (\ref{sqf1}) \text{ to } (\ref{expdef}) \text{ hold} \}. 
\end{equation}
	
We shall prove the following result, which implies Theorem \ref{cubefree uniform}: 
\begin{theorem} \label{uniform 1} Suppose $t_i, i = 1, 2, 3,4$ are given as in (\ref{expdef}), satisfy (\ref{ti sum}), and $4 \delta < t_1 + t_2 < 1 - 4 \delta$. Then there exists $\kappa > 0$ such that 
\[R(T_1, T_2, T_3, T_4) = O_\kappa \left(Z^{\frac{3}{4} - \kappa} \right).\]
\end{theorem}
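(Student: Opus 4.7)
The plan is to attack $R(T_1,T_2,T_3,T_4)$ through two complementary countings of the quadric relation (\ref{quad form}),
\[Pu^2 \;=\; w_0 w_1^2 + 4 v_0 v_1^2,\]
and then combine them via a linear programming argument. Since (\ref{cubefree index}) gives $\ind(E_{a,b}) = |v_1 w_1| \asymp Z^{t_3+t_4}$, the hypothesis $t_1 + t_2 < 1 - 4\delta$ forces the index to exceed $Z^{4\delta}$, while $t_1 + t_2 > 4\delta$ prevents the squarefree parts $v_0, w_0$ from collapsing. Both margins are needed: at the excluded boundaries $t_1 + t_2 \in \{0,1\}$ the trivial bound already saturates at $Z^{3/4}$, so positive savings can only be expected in the strict interior.

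The first bound is by sublattice counting. For each fixed quadruple $(P, u, v_1, w_1)$, (\ref{quad form}) is a single linear equation in $(v_0, w_0)$, whose solution set is a coset of a rank-one sublattice of $\bZ^2$. The number of such $(v_0, w_0)$ with $|v_0| \asymp T_1$ and $|w_0| \asymp T_2$ is $O(1 + \min(T_1/T_4^2,\, T_2/T_3^2))$. Summing over the $u$-range $|u|^2 \ll (T_1 T_3^2 + T_2 T_4^2)/P$, then over $v_1, w_1$, and finally over squarefree $P$ (controlled by its multiplicative contribution), yields a first bound $R(T_1, \ldots, T_4) \ll Z^{L_1(\Bt)}$ for an explicit piecewise-linear function $L_1$.

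The second bound treats (\ref{quad form}), for fixed $(P, v_1, w_1)$, as a smooth projective conic $C_{P,v_1,w_1} \subset \bP^2$ in coordinates $(u : v_0 : w_0)$. The uniform estimate of Browning and Heath-Brown \cite{B-HB}, which derives from Salberger's global determinant method \cite{Sal1}, bounds the number of integer points of bounded height on such conics, saving a power of the box size over the trivial count once the conic's coefficient height is not too large. Summing over $P, v_1, w_1$ produces a complementary bound $R(T_1, \ldots, T_4) \ll Z^{L_2(\Bt)}$, which is sharpest precisely in the regime where $L_1$ is weakest.

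The main obstacle is the concluding linear programming step: one must verify that on the compact polytope
\[\{ \Bt \in \bR_{\geq 0}^4 : t_1 + t_2 + t_3 + t_4 = 1,\; 4\delta \leq t_1 + t_2 \leq 1 - 4\delta \}\]
the piecewise-linear function $\min(L_1(\Bt), L_2(\Bt))$ is strictly less than $3/4$. Because the two linear forms vary in opposite directions, the maximum of the minimum is attained along a crossover locus inside the interior of the polytope, and the $4\delta$-margin is what guarantees this crossover value is strictly less than $3/4$. Setting $\kappa$ equal to the resulting positive gap then completes the proof.
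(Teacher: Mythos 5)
Your overall architecture (two complementary counts of solutions to $Pu^2 = w_0w_1^2 + 4v_0v_1^2$, combined by optimizing piecewise-linear exponents over the simplex) matches the paper's, but there are two concrete problems, and the second one is fatal as the argument stands.

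First, the conic step is set up incorrectly. For fixed $(P, v_1, w_1)$ the equation $Pu^2 - w_1^2 w_0 - 4v_1^2 v_0 = 0$ is quadratic in $u$ but \emph{linear} in $v_0, w_0$; it is not a homogeneous ternary quadratic form in $(u, v_0, w_0)$, so it does not define a conic in $\bP^2$ and Proposition \ref{BHBcor} does not apply to it. The correct move (the one the paper makes) is to fix $(v_0, w_0)$ and view $Pu^2 - 4v_0v_1^2 - w_0w_1^2$ as a nonsingular diagonal form in $(u : v_1 : w_1)$ with $|\det M| \asymp P T_1 T_2$ and $\Delta_0 \leq 4$; only then does Corollary 2 of \cite{B-HB} give a saving governed by the coefficient sizes $T_1, T_2$.

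Second, even after repairing the conic step, your two bounds do not cover the polytope. Take $(t_1,t_2,t_3,t_4) = (1/2, 1/4, 1/4, 0)$, which satisfies $4\delta < t_1+t_2 = 3/4 < 1-4\delta$ for small $\delta$. Your lattice count fixes $(u, v_1, w_1)$ — about $Z^{1/2}\cdot Z^{1/4}$ triples — and for each finds $O(1 + \min(T_1/T_4^2, T_2/T_3^2)) = O(1)$ pairs $(v_0,w_0)$, giving exactly $Z^{3/4}$ with no saving (the ``$+1$'' per triple is what kills you). The corrected conic bound fixes $(v_0,w_0)$ — about $T_1T_2 = Z^{3/4}$ conics — and the Browning--Heath-Brown count per conic is $O(1 + (Z^{3/4}/(PT_1T_2))^{1/3}) = O(1)$, again exactly $Z^{3/4}$. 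So $\max_{\Bt}\min(L_1,L_2) = 3/4$ is attained in the interior of your polytope and no positive $\kappa$ comes out of the linear program. The paper escapes this point with two ingredients you omit: a divisor-function bound (fix $v_0, v_1, w_0$; the binary form $Pu^2 - w_0w_1^2$ represents the fixed integer $4v_0v_1^2$ only $O_\ep(X^\ep)$ times) covering $t_1+t_2+t_3 \leq 3/4-\delta_1$ or $t_1+t_2+t_4 \leq 3/4-\delta_1$, and a \emph{differently paired} lattice count (fix $w_0, v_1$; count $(u, w_1)$ in $O_\ep(X^\ep)$ sublattices of determinant $\asymp v_1^2$ via Lemma \ref{lat lem}, then recover $v_0$), which at the point above yields $O(T_1^{1/2}T_2T_4 + T_2T_3) = O(Z^{1/2})$. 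The choice of which variables to fix in the lattice step is not a matter of taste here: your pairing is weakest exactly where the conic bound is also weakest.
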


	 The proof requires two lemmata which counts points inside sub-lattices of $\bZ^2$ and points of bounded height on quadric curves in $\bP^2$ effectively. In particular we require the following result due to Browning and Heath-Brown \cite{B-HB}. It provides an essentially sharp upper bound for the number of rational points of bounded height on a quadric curve in $\bP^2$. 
\begin{proposition}[Corollary 2, \cite{B-HB}] \label{BHBcor} Let $\Q \in \bZ[x_1, x_2, x_3]$ be a non-singular quadratic form with matrix $M$. Let $\Delta = |\det(M)|$ and write $\Delta_0$ for the greatest common divisor of the $2 \times 2$ minors of $M$. Then we have
\begin{equation} \# \{(x_1, x_2, x_3) \in \bZ^3 : \gcd(x_1, x_2, x_3) = 1, |x_i| \leq R_i \text{ for } i = 1,2,3, \Q(x_1, x_2, x_3) = 0\} 
\end{equation}
\[\ll \left\{1 + \left(\frac{R_1 R_2 R_3 \Delta_0^{3/2}}{\Delta} \right)^{\frac{1}{3}} \right\} \tau(\Delta)\]
where $\tau(\cdot)$ is the divisor function. 
\end{proposition}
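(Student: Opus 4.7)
The plan is to prove Proposition \ref{BHBcor} via Heath-Brown's $p$-adic determinant method, in the form refined by Browning and Heath-Brown in \cite{B-HB}. The basic idea is that primitive integer points $\Bx = (x_1, x_2, x_3)$ on the conic $\Q = 0$, with $|x_j| \leq R_j$, can be grouped according to their reduction modulo a cleverly chosen auxiliary prime $p$; each reduction class $\ol{\Bx} \in \Q(\bF_p)$ will be shown to lift to a set of primitive integer solutions that lies on a single \emph{auxiliary line} in $\bP^2$, and every such line meets $\Q = 0$ in at most $2$ points by B\'ezout. Hence the total count is bounded by twice the number of $\bF_p$-points of $\Q$, which is $O(p)$ when $p \nmid \Delta$.

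The heart of the argument is the construction of the auxiliary line attached to each $\ol{\Bx} \in \Q(\bF_p)$. Let $N$ be the $3 \times 3$ integer matrix whose rows are three primitive solutions $\Bx^{(1)}, \Bx^{(2)}, \Bx^{(3)}$ lifting $\ol{\Bx}$, so that $\Bx^{(i)} \equiv \lambda_i \ol{\Bx} \pmod{p}$. Two row operations modulo $p$ immediately force $p^2 \mid \det N$; moreover, because the tangent direction to $\Q$ at $\ol{\Bx}$ is controlled by the cofactor matrix of $M$ modulo $p$, a further factor of $\Delta/\Delta_0^{3/2}$ can be extracted from the divisibility of $\det N$. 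The Hadamard bound gives $|\det N| \leq 6 R_1 R_2 R_3$, so that when
\[
6 R_1 R_2 R_3 < p^2 \cdot \frac{\Delta}{\Delta_0^{3/2}},
\]
one concludes $\det N = 0$. This dictates the optimal choice $p \asymp (R_1 R_2 R_3 \Delta_0^{3/2}/\Delta)^{1/3}$ and produces a nonzero linear form $L \in \bZ[x_1, x_2, x_3]$ vanishing on every primitive lift of $\ol{\Bx}$ inside the height box.

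Summing the $O(1)$ contribution from each reduction class over the $O(p)$ points of $\Q(\bF_p)$ gives a count of $O\bigl((R_1 R_2 R_3 \Delta_0^{3/2}/\Delta)^{1/3}\bigr)$. The additive $1$ in the statement handles the regime in which the prescribed $p$ is smaller than the smallest usable prime, while the $\tau(\Delta)$ factor arises from the need to avoid primes dividing $\Delta$: one either chooses $p$ from a short interval via Bertrand-type spacing and pays for the exclusion through a divisor-function loss, or, when no single $p$ is available, iterates the argument over squarefree divisors of $\Delta$ and reassembles the estimates.

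The principal obstacle is the sharp tracking of the $\Delta_0$-refinement in the determinant bound. A naive execution of the determinant method produces only an estimate of the form $\ll (R_1R_2R_3)^{1/3}/\Delta^{1/6} + 1$, which is insufficient when $\Q$ is imprimitive, or more generally when $\Delta_0$ is substantially larger than $1$. Recovering the correct $\Delta_0^{3/2}/\Delta$ requires decomposing $\det N$ along the primary components of the cofactor matrix of $M$ modulo $p$ and invoking the elementary divisor theorem for $M \bmod p$ to pin down the true $p$-adic valuation of each minor. This careful bookkeeping is the substantive contribution of \cite{B-HB} beyond Heath-Brown's original treatment in \cite{HB1} and is the step where the proof would need to be executed with the greatest care.
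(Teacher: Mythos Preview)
The paper does not prove this proposition at all: it is quoted verbatim as Corollary~2 of Browning--Heath-Brown \cite{B-HB} and used as a black box. There is no proof in the present paper to compare your proposal against.

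Your outline is a reasonable high-level description of the determinant method as it applies to conics, and it is broadly in the spirit of what Browning and Heath-Brown actually do in \cite{B-HB}. That said, as written it is a sketch rather than a proof, and a couple of the quantitative claims are loose. The assertion that one extracts an extra factor of exactly $\Delta/\Delta_0^{3/2}$ from the cofactor structure of $M$ modulo $p$ is not quite how the argument in \cite{B-HB} runs; the $\Delta_0$-dependence enters through a preliminary change of variables that diagonalises $M$ over $\bZ_p$ and rescales the box, rather than through a divisibility of $\det N$ beyond $p^2$. Likewise, the $\tau(\Delta)$ factor in \cite{B-HB} does not arise from iterating over squarefree divisors of $\Delta$ but from a parametrisation of the conic once a single rational point is found, with the divisor function counting the splitting of the resulting binary quadratic form. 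If you intend to supply an actual proof rather than cite the result, these two steps would need to be made precise.
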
 	
The strength of this Proposition lies in its uniformity with respect to the height of the coefficients of $\Q$, in that it is entirely independent of the height except on the determinant of $M$. A similar result, due to Heath-Brown \cite{HB3}, holds for sub-lattices of $\bZ^2$.
\begin{lemma} \label{lat lem} Let $\Lambda \subset \bZ^2$ be a lattice. Then for all positive real numbers $R_1, R_2$ the number of primitive integral points $\Bx \in \Lambda$ satisfying $|x_i| \leq R_i, i = 1,2$ is at most $O \left(\dfrac{R_1 R_2}{\det(\Lambda)} + 1 \right)$. 
\end{lemma}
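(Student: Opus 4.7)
The plan is to apply Minkowski's theorems on successive minima to the symmetric convex body $B = [-R_1, R_1] \times [-R_2, R_2]$, which has volume $4 R_1 R_2$. Let $\lambda_1 \leq \lambda_2$ denote the successive minima of $\Lambda$ with respect to $B$, and pick vectors $v_1, v_2 \in \Lambda$ realizing them; in dimension two a standard result in the geometry of numbers permits $v_1, v_2$ to simultaneously be chosen as a $\bZ$-basis of $\Lambda$. Minkowski's second theorem then gives $\lambda_1 \lambda_2 \asymp \det(\Lambda)/\mathrm{vol}(B) \asymp \det(\Lambda)/(R_1 R_2)$.

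I would then split the count into three cases. If $\lambda_1 > 1$, then $B$ contains no nonzero lattice point at all, and the bound is trivial. If $\lambda_1 \leq 1 < \lambda_2$, then by the very definition of $\lambda_2$ the body $B$ cannot contain two linearly independent lattice vectors; hence every point of $\Lambda \cap B$ lies on the line $\bR v_1$. Since $v_1$ is a shortest nonzero vector of $\Lambda$, it must itself be primitive (any proper divisor in $\Lambda$ would have smaller $B$-norm and contradict minimality), so $\Lambda \cap \bR v_1 = \bZ v_1$, and the only primitive multiples of $v_1$ are $\pm v_1$. This contributes $O(1)$. If $\lambda_2 \leq 1$, the classical successive-minima point count yields
\[
\#(\Lambda \cap B) \ll \left(1 + \frac{1}{\lambda_1}\right)\left(1 + \frac{1}{\lambda_2}\right) \ll \frac{1}{\lambda_1 \lambda_2} \ll \frac{R_1 R_2}{\det(\Lambda)},
\]
which trivially bounds the count of primitive points.

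Assembling the three cases delivers the advertised bound $O(1 + R_1 R_2/\det(\Lambda))$. The only conceptually delicate regime is the middle one: when $\lambda_1$ is tiny but $\lambda_2 > 1$, the total lattice-point count in $B$ can be of order $1/\lambda_1$, which may vastly exceed $R_1 R_2/\det(\Lambda)$. The essential saving is the primitivity hypothesis, which collapses all of those collinear multiples of $v_1$ down to $\pm v_1$. No obstacles beyond this standard geometry-of-numbers manipulation are anticipated.
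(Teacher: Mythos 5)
Your proof is correct. The paper does not give its own argument here --- it simply cites Lemma 2 of Heath-Brown \cite{HB3} --- and the proof there runs along essentially the same lines as yours: a successive-minima trichotomy in which the case $\lambda_1 \leq 1 < \lambda_2$ is rescued precisely by the observation that a line through the origin carries at most two primitive points, while the case $\lambda_2 \leq 1$ is handled by the standard bound $\#(\Lambda \cap B) \ll (1+\lambda_1^{-1})(1+\lambda_2^{-1})$ together with Minkowski's second theorem. You have correctly identified the middle regime as the only delicate one, and your self-contained write-up is a faithful reconstruction of the cited result.
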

\begin{proof} See Lemma 2 in \cite{HB3}. 
\end{proof}

Now to prove Theorem \ref{uniform 1}, we will consider various conditions on the $t_i$'s each of which gives satisfactory bounds for $R(T_1, T_2, T_3, T_4)$, and together cover all cases except for $t_1 + t_2 \leq 1 - 2\delta$. We first prove the following proposition:
\begin{proposition} Suppose that there exists $\delta_1 > 0$ such that either 
\begin{equation} \label{3s1} t_1 + t_2 + t_3 \leq \frac{3}{4} - \delta_1 \text{ or } t_1 + t_2 + t_4 \leq \frac{3}{4} - \delta_1.\end{equation} 
Then there exists $\kappa > 0$ such that 
\[R(T_1, T_2, T_3, T_4) = O_\kappa \left(X^{\frac{3}{4} - \kappa} \right).\] 
\end{proposition}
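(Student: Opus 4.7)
By the $(v_0, v_1) \leftrightarrow (w_0, w_1)$ symmetry of the counting argument (the asymmetric factor of $4$ is harmless, being absorbed by the substitution $v_1 \mapsto 2 v_1$ in the opposite case), we may assume that $t_1 + t_2 + t_3 \le 3/4 - \delta_1$; the case $t_1 + t_2 + t_4 \le 3/4 - \delta_1$ is handled identically with $(v_0, w_0, w_1)$ fixed in place of $(v_0, w_0, v_1)$. The plan is to fix the triple $(v_0, w_0, v_1)$ in its dyadic box---a total of $\ll T_1 T_2 T_3$ choices---and for each such triple to bound the number of remaining pairs $(u, w_1)$ satisfying the Pell-type equation
\[
P u^2 - w_0 w_1^2 = 4 v_0 v_1^2.
\]

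Setting $U = P u$, $D = P w_0$, and $M = 4 P v_0 v_1^2 \ne 0$, this becomes $U^2 - D w_1^2 = M$. Since $P$ and $w_0$ are coprime and squarefree, $D$ is a nonzero squarefree integer; in particular $D$ is not a perfect square when $|D| \ge 2$, and the binary form $U^2 - D w_1^2$ is non-degenerate. Classical reduction theory of binary quadratic forms then yields
\[
\#\{(U, w_1) \in \bZ^2 : U^2 - D w_1^2 = M,\ |U| \le P R_u,\ |w_1| \le T_4\} \ll \tau(|M|) \left(1 + \frac{\log(P R_u + T_4)}{\log \epsilon_D}\right)
\]
when $D > 0$, with $\epsilon_D > 1$ the fundamental unit of $\bZ[\sqrt{D}]$; when $D < 0$ the parenthetical factor is replaced by $O(1)$. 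The elementary bound $\epsilon_D \ge 2\sqrt{|D|}$---which follows from $x_1 \ge \sqrt{|D|}$ for the fundamental solution of Pell's equation---gives $\log \epsilon_D \gg 1$ uniformly for $|D| \ge 2$. Combined with $\tau(|M|) \ll X^{\varepsilon}$ and $P R_u + T_4 \ll X$, this yields at most $\ll X^{\varepsilon}$ admissible pairs $(u, w_1)$ for each fixed triple.

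Summing over the triples $(v_0, w_0, v_1)$ in the prescribed dyadic ranges gives
\[
R(T_1, T_2, T_3, T_4) \ll T_1 T_2 T_3 \cdot X^{\varepsilon} \le Z^{3/4 - \delta_1 + \varepsilon} \le X^{3/4 - \delta_1 + \varepsilon},
\]
since $Z \le X$. Choosing $\varepsilon < \delta_1$ yields the claim with $\kappa = \delta_1 - \varepsilon > 0$.

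The main obstacle is verifying the Pell bound holds uniformly across all $D$. The estimate $\log \epsilon_D \gg 1$ for $|D| \ge 2$ is elementary but crucial; it must be noted that the standard orbit count under the unit action produces the factor $\log(PR_u + T_4)/\log \epsilon_D$ which needs to be tamed to a power of $\log X$, and the boundary cases $|D| \le 1$ (i.e., $P = 1$, $|w_0| = 1$) reduce to simple divisor problems. Degenerate tuples with $v_0 v_1 w_0 w_1 = 0$ correspond to singular curves outside $\E_2$ and contribute only a lower-order term which may be discarded at the outset.
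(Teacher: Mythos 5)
Your proof is correct and follows essentially the same route as the paper: fix $(v_0,w_0,v_1)$ in $O(T_1T_2T_3)=O(Z^{3/4-\delta_1})$ ways and bound the solutions $(u,w_1)$ of the resulting Pell-type equation by $O_\ep(X^\ep)$ via the divisor-times-$\log$ count for a non-degenerate binary quadratic form; the paper states this count in one line, whereas you supply the automorph/fundamental-unit details (your intermediate claim $\epsilon_D\geq 2\sqrt{|D|}$ is slightly off for units of norm $-1$, but the needed conclusion $\log\epsilon_D\gg 1$ stands). No substantive difference.
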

\begin{proof} Without loss of generality, we may suppose that 
\[t_1 + t_2 + t_3 \leq \frac{3}{4} - \delta_1.\]
Then we may choose $v_0, v_1, w_0$ in $O(T_1 T_2 T_3) = O \left(X^{3/4 - \delta_1}\right)$ ways. Having done so, (\ref{quad form}) becomes 
\[Pu^2 - w_0 w_1^2 = 4v_0 v_1^2\]
where $w_0, v_0, v_1$ are fixed and $u, w_1$ are bounded by a power of $X$. The left hand side is a quadratic form, and thus this equation has $O(\tau(4v_0 v_1^2) \log X) = O_\ep \left(X^{\ep}\right)$ many solutions. Choosing $\ep = \delta_1/2$ we conclude that
\[R(T_1, T_2, T_3, T_4) = O_\delta \left(X^{\frac{3}{4} - \frac{\delta_1}{2}} \right),\]
and choosing $\kappa = 2\delta_1$ say gives us the desired result. 
\end{proof} 

We now consider the quadruples $(t_1, t_2, t_3, t_4)$ for which (\ref{3s1}) fails. In fact we consider a slightly larger set of quadruples. In particular, we choose some $\delta_2 > 0$, arbitrarily small, so that 
\begin{equation} \label{quad fail} t_1 + t_2 + t_3 \geq \frac{3}{4} - \delta_2 \text{ and } t_1 + t_2 + t_4 \geq \frac{3}{4} - \delta_2 .\end{equation}

We now show that fixing $v_0, w_1$ and treating (\ref{quad form}) as counting integral points in $O_\ep(X^\ep)$ lattices of the shape 
\[\{(x,y) \in \bZ^2 : x \equiv \omega y \pmod{4v_1^2}, |x| \asymp T_1^{1/2} T_3, |y| \asymp T_4\}\]
will allow us to recover $u = x, w_1 = y$ which then gives $v_0 = (Pu^2 - w_0 w_1^2)/4v_1^2$. Applying Lemma \ref{lat lem} gives the bound 
\[O \left(\frac{T_1^{1/2} T_3 T_4}{T_3^2} + 1 \right) = O \left(\frac{T_1^{1/2} T_4}{T_3} + 1\right).
\]
Summing over $|w_0| \asymp T_2, |v_1| \asymp T_3$ gives
\begin{equation} O \left(T_1^{1/2} T_2 T_4 + T_2 T_3 \right).
\end{equation} 
Symmetrically, we obtain the bound 
\[O \left(T_2^{1/2} T_1 T_3 + T_1 T_4 \right).\]
This provides a satisfactory bound if for some $\delta_2 > 0$ we have
\begin{equation} \label{ineq1} \frac{t_1}{2} + t_2 + t_4 < \frac{3}{4} - \delta_2 \text{ and } t_2 + t_3 < \frac{3}{4} - \delta_2 \end{equation} 
or
\begin{equation} \label{ineq2} \frac{t_2}{2} + t_1 + t_3 < \frac{3}{4} -\delta_2 \text{ and } t_1 + t_4 < \frac{3}{4} - \delta_2.\end{equation}
We summarize this as:
\begin{proposition} Suppose that $t_1, t_2, t_3, t_4$ satisfies (\ref{quad fail}) and one of (\ref{ineq1}) or (\ref{ineq2}). Then 
\[R(T_1, T_2, T_3, T_4) = O_{\delta_2, \ep} \left(X^{3/4 - \delta_2 + \ep}\right).\]
\end{proposition}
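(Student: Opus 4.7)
The plan is to make rigorous the lattice-and-congruence count sketched in the paragraph preceding the statement, then read off the exponent bound from the hypotheses. Both inequalities (\ref{ineq1}) and (\ref{ineq2}) are set up to make the two summands in one of the two already-derived bounds small, so the work is concentrated in establishing those bounds cleanly.

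First I would fix $w_0$ with $|w_0| \asymp T_2$ and $v_1$ with $|v_1| \asymp T_3$. The relation $4 v_0 v_1^2 = Pu^2 - w_0 w_1^2$ then determines $v_0$ from $(u, w_1)$ and imposes the congruence $Pu^2 \equiv w_0 w_1^2 \pmod{4 v_1^2}$. Since $v_1$ is squarefree by (\ref{sqf1}) and coprime to $P w_0$, I would invoke Chinese remainder to reduce this to extracting square roots modulo the prime divisors of $4 v_1^2$; the number of solutions $\omega \pmod{4 v_1^2}$ of $\omega^2 \equiv P^{-1} w_0 \pmod{4 v_1^2}$ is then bounded by the divisor function $\tau(4 v_1^2) \ll_\ep X^\ep$. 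Each such $\omega$ restricts $(u, w_1)$ to the sublattice $\Lambda_\omega = \{(x,y) \in \bZ^2 : x \equiv \omega y \pmod{4 v_1^2}\}$ of determinant $\asymp T_3^2$.

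Next, I would apply Lemma~\ref{lat lem} to $\Lambda_\omega$ with $|u| \ll T_1^{1/2} T_3$ and $|w_1| \asymp T_4$, giving at most $O(T_1^{1/2} T_4/T_3 + 1)$ admissible pairs per $\omega$. Summing over $\omega$, over the $\asymp T_2$ choices of $w_0$, and over the $\asymp T_3$ choices of $v_1$ yields
\[R(T_1,T_2,T_3,T_4) \ll_\ep X^\ep \bigl(T_1^{1/2} T_2 T_4 + T_2 T_3\bigr).\]
Exchanging the roles of $(v_0,v_1)$ and $(w_0,w_1)$ (the factor $4$ is absorbed in $P$-adic bookkeeping) produces the symmetric bound $R(T_1,T_2,T_3,T_4) \ll_\ep X^\ep \bigl(T_2^{1/2} T_1 T_3 + T_1 T_4\bigr)$.

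Finally, translating the two summands into the exponents $t_i = \log T_i / \log Z$, hypothesis (\ref{ineq1}) forces $T_1^{1/2} T_2 T_4 \ll X^{3/4 - \delta_2}$ and $T_2 T_3 \ll X^{3/4 - \delta_2}$, while hypothesis (\ref{ineq2}) does the same for the symmetric bound. Either way, $R(T_1, T_2, T_3, T_4) \ll_{\delta_2,\ep} X^{3/4 - \delta_2 + \ep}$, as required. The main obstacle to keep under control is the Chinese remainder step producing the $O_\ep(X^\ep)$ square-root count: this relies essentially on the squarefreeness of $v_1$ imposed by (\ref{sqf1}) and its coprimality with $P w_0$ (built into our setup), without which $\tau(4 v_1^2)$ could grow faster than any $X^\ep$ and the argument would collapse.
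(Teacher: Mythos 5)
Your proposal is correct and follows the paper's argument essentially verbatim: fix $w_0$ and $v_1$, convert the congruence $Pu^2 \equiv w_0 w_1^2 \pmod{4v_1^2}$ into $O_\ep(X^\ep)$ sublattices of determinant $\asymp T_3^2$, apply Lemma \ref{lat lem}, sum over $w_0, v_1$, symmetrize, and read off the exponents from (\ref{ineq1}) or (\ref{ineq2}); you even supply the CRT/squarefreeness justification for the $X^\ep$ square-root count that the paper leaves implicit. The only caveat (shared with the paper, which takes $|u| \asymp T_1^{1/2}T_3$) is that a priori $|u| \ll P^{-1/2}\max\{T_1^{1/2}T_3, T_2^{1/2}T_4\}$, but this is inherited from the source rather than introduced by you.
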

Next let us analyze how both (\ref{ineq1}) and (\ref{ineq2}) can fail for every $\delta_2 > 0$. First note that it is not possible for 
\[t_2 + t_3 \geq \frac{3}{4} - \delta_2  \text{ and } t_1 + t_4 \geq \frac{3}{4} - \delta_2  \]
if $\delta_2$ is sufficiently small, since $t_1 + t_2 + t_3 + t_4 = 1$ by (\ref{ti sum}). Therefore, upon assuming $\delta_2$ is arbitrarily small, we may suppose that 
\[\frac{t_1}{2} + t_2 + t_4 \geq \frac{3}{4} - \delta_2 \text{ and } \frac{t_2}{2} + t_1 + t_3 \geq \frac{3}{4} - \delta_2 \]
or
\[\frac{t_1}{2} + t_2 + t_4 \geq \frac{3}{4} - \delta_2  \text{ and } t_1 + t_4 \geq \frac{3}{4} - \delta_2 ,\]
and a symmetric case which is equivalent to the line above. In the first case we obtain by summing the two inequalities
\[\frac{3}{2} (t_1 + t_2) + t_3 + t_4 \geq \frac{3}{2} - 2 \delta_2,\]
which implies that
\[t_1 + t_2 \geq 1 - 4 \delta_2.\]
This then implies $t_3 + t_4 \leq 4 \delta_2$, so are excluded from the theorem as long as $\delta_2 < \delta$. In the second case we have 
\begin{equation} \label{4s1} \frac{t_1}{2} + t_2 + t_4  \geq \frac{3}{4} - \delta_2  \text{ and } t_1 + t_4 \geq \frac{3}{4} - \delta_2. \end{equation}
By (\ref{ti sum}), the latter inequality implies 
\[t_2 + t_3 \leq \frac{1}{4} + \delta_2 ,\]
and therefore (\ref{quad fail}) gives
\[t_1 \geq \frac{1}{2} - 2 \delta_2.\]
We note that the condition
\begin{equation} \label{t2t4 cond} \frac{t_1}{2} + t_2 + t_4 \geq \frac{3}{4} - \delta_2 \end{equation}
and (\ref{ti sum}) imply that $t_1 \leq 1/2 + 2 \delta_2$. To see this, by (\ref{ti sum}) we have
\[t_2 + t_4 \leq 1  - t_1.\]
If $t_1 > 1/2 + 2 \delta_2$, then in turn we find that 
\begin{align*} \frac{t_1}{2} + t_2 + t_4 & \leq \frac{t_1}{2} + 1  - t_1 \\
& = 1  - \frac{t_1}{2} < 1  - \frac{1}{4} - \delta_2   \\
& = \frac{3}{4}- \delta_2 .
\end{align*}
This in turn violates (\ref{t2t4 cond}). We thus conclude from (\ref{t2t4 cond}) that
\begin{equation} \label{mixed ub} \frac{1}{2}  - 2 \delta_2 \leq t_1 \leq \frac{1}{2} + 2 \delta_2 \text{ and } \frac{1}{2} - \delta_2  \leq  t_2 + t_4 \leq \frac{1}{2} + 2 \delta_2.\end{equation} 
But then (\ref{4s1}) implies that 
\[t_4 \geq \frac{3}{4}  - t_1 - \delta_2 \geq \frac{1}{4} - 3 \delta_2,\]
thus (\ref{mixed ub}) gives
\begin{equation} \label{t2 ub} t_2 \leq  \frac{1}{2} + 2 \delta_2  - t_4 \leq \frac{1}{4} + 5 \delta_2.\end{equation} 
Feeding these estimates back into (\ref{ti sum}) gives
\begin{align*} t_3 & = 1 - t_1 - t_2 - t_4 \\
& \leq 1  - \frac{1}{2} + 2 \delta_2    - \frac{1}{2} + \delta_2 \\
& = 3 \delta_2. \end{align*} 
Next (\ref{quad fail}) implies that 
\begin{align*} t_2 & \geq \frac{3}{4}  - t_1 - t_3 \\
& \geq \frac{3}{4}  - \frac{1}{2} - 2 \delta_2 - 3 \delta_2 \\
&  = \frac{1}{4}   - 5 \delta_2
\end{align*} 
and feeding this back into (\ref{mixed ub}) gives 
\begin{align*} t_4 & \leq \frac{1}{2} + 2 \delta_2  - t_2 \\
& \leq \frac{1}{2}   - \frac{1}{4} + 7 \delta_2 \\
& = \frac{1}{4} + 7 \delta_2.
\end{align*} 
Now observe that
\begin{align*} \frac{t_1}{2} + t_3 & \leq \frac{1}{4}  + 4 \delta_2
\end{align*}
and
\begin{align*} \frac{t_2}{2} + t_4 & \geq \frac{1}{8} - \frac{5}{2} \delta_2  + \frac{1}{4}  - 3 \delta_2 = \frac{3}{8} - \frac{11}{2} \delta_2.
\end{align*}
Hence we have 
\begin{equation} \label{key quad ineq} \frac{t_1}{2} + t_3 \leq \frac{t_2}{2}  + t_4,
\end{equation}
provided again that we choose $\delta_2$ sufficiently small. Further, we have
\begin{align} \label{hb bd} \frac{t_2}{2} + t_4 & \leq \frac{1}{8} + \frac{5 \delta_2}{2}   + \frac{1}{4}  + 7 \delta_2   \leq \frac{t_1}{2} + t_3 + \frac{1}{8} + \frac{21 \delta_2}{2}  .
\end{align} 
We summarize this as:
\begin{proposition} Suppose that (\ref{quad fail}) and (\ref{4s1}) hold. Then 
\[\frac{t_1}{2} + t_3 \leq \frac{t_2}{2} + t_4 \leq \frac{1}{8} + \frac{21 \delta_2}{2} + \frac{t_1}{2} + t_3.\]
\end{proposition}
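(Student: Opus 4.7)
The plan is essentially to compile the chain of elementary inequalities already laid out in the paragraph preceding the proposition: the hypotheses (\ref{quad fail}), (\ref{4s1}), and the normalization (\ref{ti sum}) pin each $t_i$ inside a window of width $O(\delta_2)$ around its canonical value $t_1 \approx 1/2$, $t_2 \approx t_4 \approx 1/4$, $t_3 \approx 0$. Both inequalities then follow by direct computation from these windows.

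First I would derive two-sided bounds for each $t_i$ individually. The second inequality of (\ref{4s1}), namely $t_1 + t_4 \geq 3/4 - \delta_2$, combined with (\ref{ti sum}), gives $t_2 + t_3 \leq 1/4 + \delta_2$; feeding this into the first part of (\ref{quad fail}) yields $t_1 \geq 1/2 - 2\delta_2$. Conversely, the condition $\frac{t_1}{2} + t_2 + t_4 \geq 3/4 - \delta_2$ together with $t_2 + t_4 \leq 1 - t_1$ forces $t_1 \leq 1/2 + 2\delta_2$. Substituting back, (\ref{4s1}) gives $t_4 \geq 1/4 - 3\delta_2$, the bound on $t_2 + t_4$ yields $t_2 \leq 1/4 + 5\delta_2$, and (\ref{ti sum}) then forces $t_3 \leq 3\delta_2$. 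The second inequality of (\ref{quad fail}) gives the mirror bound $t_2 \geq 1/4 - 5\delta_2$, whence $t_4 \leq 1/4 + 7\delta_2$.

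With these bounds in hand, the two inequalities of the proposition are a matter of arithmetic. For the lower inequality (\ref{key quad ineq}), the upper bound $\frac{t_1}{2} + t_3 \leq \frac{1}{4} + 4\delta_2$ combined with the lower bound $\frac{t_2}{2} + t_4 \geq \frac{3}{8} - \frac{11\delta_2}{2}$ is consistent precisely when $\delta_2$ is sufficiently small, which is the standing assumption. For the upper inequality, the estimate $\frac{t_2}{2} + t_4 \leq \frac{3}{8} + \frac{19\delta_2}{2}$ combined with $\frac{t_1}{2} + t_3 \geq \frac{1}{4} - \delta_2$ (using $t_1 \geq 1/2 - 2\delta_2$ and $t_3 \geq 0$) yields a difference of at most $\frac{1}{8} + \frac{21\delta_2}{2}$, exactly as claimed.

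There is no real obstacle: the entire argument is bookkeeping, once one keeps track of which pair of inequalities from (\ref{quad fail}) and (\ref{4s1}) is being applied at each step. The only delicate point is to ensure that $\delta_2$ is chosen small enough that the individual windows on the $t_i$'s remain nonempty and that the strict inequality (\ref{key quad ineq}) holds; this smallness was already flagged in the paragraph preceding the statement.
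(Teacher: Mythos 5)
Your proposal is correct and takes essentially the same route as the paper: the proposition is simply a summary of the chain of inequalities in the paragraph preceding it, and the individual $O(\delta_2)$-windows you derive for $t_1, t_2, t_3, t_4$ together with the final arithmetic reproduce that computation exactly. The only caveat is that one or two intermediate constants (e.g.\ $t_3 \leq 3\delta_2$, $t_2 \geq \tfrac{1}{4} - 5\delta_2$) inherit the paper's own minor bookkeeping slips of size $O(\delta_2)$, which are immaterial since $\delta_2$ is ultimately taken arbitrarily small.
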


We now seek to apply Corollary 2 in \cite{B-HB} to (\ref{quad form}) or Proposition with the variables satisfying (\ref{dya range}). Indeed, we will have 
\[|v_1| \ll T_3, |w_1| \ll T_4, |u| \ll P^{-\frac{1}{2}} \max\left\{T_0^{\frac{1}{2}} T_4, T_1^{\frac{1}{2}} T_3\right\}, |\Delta| \asymp T_1 T_2 P, \Delta_0 \leq 4.\]
The last inequality comes from the observation that $P, w_0, v_0$ are pairwise co-prime. Proposition \ref{BHBcor} then gives the bound
\begin{equation} \label{quad form bd} \left(\frac{T_3 T_4 \max\{T_1^{1/2} T_3, T_2^{1/2} T_4\}}{P^{3/2} T_1 T_2} \right)^{1/3} \tau(v_0 v_1)\end{equation}
Since (\ref{hb bd}) implies
\[T_1 T_3^2 < T_2 T_4^2 \leq Z^{\frac{1}{4} + 21 \delta_2 } T_1 T_3^2,\]
equation (\ref{quad form bd}) and (\ref{ti sum}) give the bound 
\[\left(\frac{(Z/T_1 T_2) T_1^{1/4} T_3^{1/2} T_2^{1/4} T_4^{1/2} Z^{\frac{1}{16} + \frac{21 \delta_2}{4} }  }{P^{3/2} T_1 T_2} \right)^{1/3} Z^\ep \]
for any $\ep > 0$. Using 
\[T_3 T_4 \ll \frac{Z}{T_1 T_2}\]
we obtain 
\[\left(\frac{Z^{\frac{3 }{2} + \frac{1}{16} + \frac{21 \delta_2}{4} }}{(T_1 T_2)^{9/4}} \right)^{1/3} Z^\ep = \frac{Z^{\frac{25}{48} + \frac{7 \delta_2}{4}  }}{(T_1 T_2)^{3/4}} Z^\ep. \]
Multiplying by $T_1, T_2$ we obtain 
\[Z^{\frac{25}{48}  + \frac{7\delta_2}{4} } (T_1 T_2)^{1/4} Z^\ep. \]
By (\ref{mixed ub}) and (\ref{t2 ub}) we have
\[(T_1 T_2)^{1/4} \ll Z^{\frac{1}{4} \left(\frac{1}{2} + 2 \delta_2  + \frac{1}{4} + 2 \delta_2   \right)} = Z^{\frac{3}{16} + \delta_2  } \]
Therefore the total contribution is at most 
\begin{equation} \label{t bd} O_\ep \left( P^{-1/2} Z^{\frac{17}{24}  + \ep} \right) = O_\ep \left(\frac{X^{17/24 + \ep}}{P^{1/2 + 17/12}} \right) = O_\ep \left(\frac{X^{17/24 + \ep}}{P^{23/12}} \right),\end{equation}
and summing $P \ll X^{1/2}$ gives a total contribution of 
\begin{equation} \label{t bd2} O_\ep \left(X^{\frac{17}{24} + \ep} \right).
\end{equation}
by choosing $\delta_2$ sufficiently small with respect to $\ep$. This completes the proof of Theorem \ref{uniform 1}. \\

We remark that the estimate (\ref{t bd}) gives us more room than we need, and this can be used to good effect to control the number of curves with large Szpiro ratio.

\section{Curves with large Szpiro constant} 
\label{large szpiro}

Recall that for an elliptic curve $E$, the \emph{Spizro ratio} is defined to be
\begin{equation} \label{Spi cons} \beta_E = \frac{\log |\Delta(E)|}{\log C(E)}.
\end{equation}
Our goal in this section is to count curves in $\E_2$ having bounded conductor and such that the average of the Szpiro constants of $E$ and $\phi(E)$ is as large as possible. For a given $E = E_{a,b} \in \E_2$, put $\P_{\KTI}(E), \P_{\KT}(E), \P_{\KTS}(E)$ for the rational primes for which $E$ has Kodaira symbols $\KTI, \KT, \KTS$ respectively. Recall that
\[\C(E_{a,b}) = b(a^2 - 4b).\]
 Next we put $\Sigma(E)$ for the set of prime powers $p_i^{k_i}$ for $1 \leq i \leq m$ and $q_j^{\ell_j}, 1 \leq j \leq n$ such that 
\[P_1 Q_1 = \prod_{i=1}^m p_i^{k_i} \quad \text{exactly divides} \quad b\]
and
\[P_2 Q_2 = \prod_{j=1}^n q_j^{\ell_j} \quad \text{exactly divides} \quad c = a^2 - 4b.\]
Here
\[Q_1 = \prod_{j=1}^m p_j^{ k_j - 1} \text{ and } Q_2 = \prod_{j=1}^n q_j^{\ell_j - 1 }.\]
and
\[P_1 = p_1 \cdots p_m \text{ and } P_2 = q_1 \cdots q_n.\]

The condition of $C(E_{a,b}) \leq X$ then implies that
\begin{equation} \label{cond poly 2} |\C(E_{a,b})| = |b(a^2 - 4b)| \leq P_{\KTI}^2 P_{\KTS}^4 X Q_1 Q_2.
\end{equation}
If we write 
\[C^\prime(E_{a,b}) = \frac{C(E_{a,b})}{P_{\KTI}^2 P_{\KT}^2 P_{\KTS}^2} \text{ and } \C^\prime(E_{a,b}) = \frac{\C(E_{a,b})}{P_{\KTI}^4 P_{\KT}^2 P_{\KTS}^6} \]
then we can recast (\ref{cond poly 2}) as 
\begin{equation} \label{cond poly 3} |\C^\prime(E_{a,b})| \leq \frac{X Q_1 Q_2}{P_{\KTI}^2 P_{\KT}^2 P_{\KTS}^2}.
\end{equation}
Further, we can replace the variables $b,c$ with $P_{\KTI}^2 P_{\KT} P_{\KTS}^3 b, P_{\KTI}^2 P_{\KT} P_{\KTS}^3 c$ which gives the condition 
\[|b (P_{\KTI} P_{\KTS} a^2 - 4b)| \leq \frac{X Q_1 Q_2}{P_{\KTI}^2 P_{\KT}^2 P_{\KTS}^2}.\]
Put 
\[Y = \frac{X}{P_{\KTI}^2 P_{\KT}^2 P_{\KTS}^2}.\]

We may use the fact that elements in $\E_2$ are naturally connected by a rational 2-isogeny mapping 
\begin{equation} \label{isogeny} E_{a,b} \mapsto E_{-2a, a^2 - 4b},\end{equation} 
which preserves the conductor and essentially swaps the roles of $b, c$. \\ \\
Now put
\[b = P_1 Q_1 u \text{ and } c = P_2 Q_2 v,\]
where $u,v$ are square-free integers co-prime to $P_1, P_2$ respectively. It follows that our bound condition becomes
\begin{equation} |uv| \leq \frac{Q_1 Q_2 Y}{P_1 Q_1 P_2 Q_2} = \frac{Y}{ P_1 P_2}.
\end{equation}
Using the symmetry between $u,v$, we may assume that $|u| \leq |v|$ and therefore $|u| \leq \sqrt{Y/P_1 P_2}$. 

\subsection{Application of linear programming} 

Put
\begin{equation} \label{ai} \alpha_i = \frac{\log P_i}{\log X} \text{ for } i = 1,2,\end{equation}
\begin{equation} \label{bi} \beta_i = \frac{\log Q_i}{\log X} \text{ for } i = 1,2,\end{equation} 
and 
\[\upsilon = \frac{\log |u|}{\log X} \text{ and } \nu = \frac{\log |v|}{\log X}.\]
Further put 
\[\gamma_{\KTI} = \frac{\log P_{\KTI}}{\log X}, \gamma_{\KT} = \frac{\log P_{\KT}}{\log X} \text{, and } \gamma_{\KTS} = \frac{\log P_{\KTI}}{\log X}\]
Then the average of the Szpiro ratios of the pair $E_{a,b}, E_{-2a, a^2 - 4b}$ is 
\begin{align} \label{avgBE} \frac{\beta_E + \beta_{\phi(E)}}{2} & = \frac{\log |\Delta(E_{a,b})| + \log |\Delta(E_{-2a,a^2-4b})|}{2 \log C(E)} \\
& = \frac{6 (2 \gamma_{\KTI} + \gamma_{\KT} + 3 \gamma_{\KTS}) + 3 (\alpha_1 +  \beta_1 +  \upsilon + \alpha_2 + \beta_2 + \nu)}{2(2 (\gamma_{\KTI} + \gamma_{\KT} + \gamma_{\KTS}) + \alpha_1 + \upsilon + \alpha_2 + \nu)}. \notag \\
& = \frac{3}{2} + \frac{6 \gamma_{\KTI}  + 12 \gamma_{\KTS}  +  3\beta_1 + 3\beta_2}{2(2 (\gamma_{\KTI} + \gamma_{\KT} + \gamma_{\KTS}) + \alpha_1 + \upsilon + \alpha_2 + \nu)}. \notag
\end{align} 
At this point, we outline our strategy to use linear programming to prove Theorem \ref{MT2}. We introduce the sets
\begin{equation} \label{a1a2half} \E_2(\delta; X) = \left \{E_{a,b} \in \E_2(X), \delta \leq \alpha_1 + \alpha_2 \leq \frac{1}{2} - \delta \right\}
\end{equation}
and
\begin{equation} \label{aibi102} \E_2(\delta, r; X) = \left\{E_{a,b} \in \E_2(\delta; X), \beta_1 < \alpha_1 + r, \beta_2 < \alpha_2 + r\right \}.
\end{equation}
Let $N(\delta; X) = \#\E_2(\delta; X)$ and $N(\delta, r; X) = \# \E_2(\delta, r; X)$. We shall prove: 

\begin{proposition} \label{halflem} We have 
\[N(\delta; X) = \# \E_2(\delta; X) = O_\delta \left(X^{\frac{3 - \delta}{4}} \right).\]
\end{proposition}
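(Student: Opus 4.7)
The plan is to prove Proposition \ref{halflem} by a careful dyadic decomposition of the parameters $(P_1,Q_1,u,P_2,Q_2,v,a)$ and an application of Heath-Brown's lattice-point bound (Lemma \ref{lat lem}), closely mirroring the strategy deployed in Section \ref{cube-free}. The modest saving of $X^{\delta/4}$ over the baseline count $O(X^{3/4})$ should arise precisely from the two-sided hypothesis $X^\delta\leq P_1P_2\leq X^{1/2-\delta}$: the lower bound forces the congruence modulus in the Diophantine constraint to be large, while the upper bound forces the number of admissible $(u,v)$ pairs (and $(P_1,P_2)$ radicals) to be small.

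First I would invoke the rational 2-isogeny $\phi:E_{a,b}\mapsto E_{-2a,a^2-4b}$: since $\phi$ preserves the conductor while essentially swapping $b$ and $c$, I may assume without loss of generality that $|b|\leq|c|$, equivalently $|u|\leq|v|$. Combined with the bound $|uv|\leq Y/(P_1P_2)$ already established in this section, this gives $|u|\leq\sqrt{Y/(P_1P_2)}$. Then I restrict to dyadic boxes $|a|\asymp A$, $|u|\asymp U$, $|v|\asymp V$, satisfying $UV\ll Y/(P_1P_2)$, together with $A\asymp\sqrt{P_2Q_2V}$ (from $a^2=4P_1Q_1u+P_2Q_2v$ and the assumption $|c|\geq|b|$); $P_1,P_2,Q_1,Q_2$ are also each confined to dyadic intervals.

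The core counting step interprets the identity $a^2=4P_1Q_1u+P_2Q_2v$ as the congruence $a^2\equiv 4P_1Q_1 u\pmod{P_2Q_2}$, with $v$ then uniquely determined. Since the prime supports of $P_1Q_1$ and $P_2Q_2$ are disjoint except for primes forcing $p\mid a$ (a negligible contribution handled separately, cf.\ the treatment of the conductor polynomial in Section \ref{Kod symbols}), for each residue class of $a\bmod P_2Q_2$ the variable $u$ lies in an arithmetic progression of modulus $\asymp P_2Q_2$. Lemma \ref{lat lem} then yields the bound
\[
\#\{(a,u)\in\bZ^2:|a|\leq A,\ |u|\leq U,\ P_2Q_2\mid a^2-4P_1Q_1u\}\ll\frac{AU}{P_2Q_2}+A\asymp U\sqrt{\frac{V}{P_2Q_2}}+\sqrt{P_2Q_2V}.
\]
This I would combine with the complementary trivial bound $O(UV)$ obtained by fixing $(u,v)$ and solving at most two quadratics for $a$, and take the minimum in each dyadic cell.

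The final step is to sum the resulting cell-by-cell bounds over all admissible dyadic ranges subject to $UV\ll Y/(P_1P_2)$ and $X^\delta\leq P_1P_2\leq X^{1/2-\delta}$. I expect the principal obstacle to lie in the resulting linear-programming optimization: one must verify that no corner of the parameter polytope in the log-exponents $(\alpha_1,\alpha_2,\beta_1,\beta_2,\upsilon,\nu)$ defeats the uniform saving of $X^{\delta/4}$, particularly at the boundary regimes $\alpha_1+\alpha_2\to\delta$ (where the lattice saving degrades because $P_2Q_2$ may be small) and $\alpha_1+\alpha_2\to 1/2-\delta$ (where $U,V$ remain large). This mirrors the exponent analysis of Theorem \ref{uniform 1}, and as there, the trade-off between the lattice bound and the trivial bound is what drives the required saving.
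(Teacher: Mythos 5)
There is a genuine gap in your core counting step. The bound
\[
\#\left\{(a,u): |a|\leq A,\ |u|\leq U,\ P_2Q_2\mid a^2-4P_1Q_1u\right\}\ \ll\ \frac{AU}{P_2Q_2}+A
\]
carries the boundary term $+A$ because you order the count as ``for each of the $\ll A$ values of $a$, the variable $u$ lies in a single residue class modulo $P_2Q_2$, hence has $\ll U/(P_2Q_2)+1$ admissible values.'' That $+A$ is fatal exactly at the corner you flag, $\alpha_1+\alpha_2$ near $1/2-\delta$. Concretely, take no additive primes, $P_1\asymp P_2\asymp X^{1/4-\delta/2}$, $Q_i=P_i$, $u=\pm1$, and $|v|\asymp V= X^{1/2+\delta}$, so that $P_1P_2|uv|\asymp X$ and $A\asymp\sqrt{P_2Q_2V}\asymp X^{1/2}$. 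Your lattice bound gives $\asymp X^{\delta}+X^{1/2}$ per choice of $(P_1,P_2)$, your trivial bound gives $UV\asymp X^{1/2+\delta}$, and multiplying the minimum by the $\asymp X^{1/2-\delta}$ admissible pairs $(P_1,P_2)$ yields $X^{1-\delta}$, which exceeds $X^{3/4}$ for small $\delta$. So no optimization over the exponent polytope can close this case from your two bounds, even though the true count there is only $O_\ep(X^{1/2+\ep})$. (A smaller quibble: the condition $P_2Q_2\mid a^2-4P_1Q_1u$ does not define a sublattice of $\bZ^2$ because of the quadratic term, so Lemma \ref{lat lem} does not apply directly; a direct coset count does give your stated bound, but that is beside the main point.)

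The fix is to reverse the order of summation, which is what the paper's Proposition \ref{LSprop} does: fix $u$ (the smaller variable) first, note that $a$ is confined to a union of intervals of total length $\ll\sqrt{Q_2Y/(P_1|u|)}$, or $\ll Q_2Y/\bigl((P_1|u|)^{3/2}Q_1^{1/2}\bigr)$ in the regime (\ref{big case}), intersected with $O_\ep(X^\ep)$ residue classes modulo $P_2Q_2$, and only then sum over $|u|\leq\sqrt{Y/(P_1P_2)}$. This replaces the fatal $+A$ by a harmless $+U$, whose total contribution is the $(T_1T_2Y)^{1/2}\ll Y^{3/4-\delta/2}$ term in (\ref{E2PSbd1}). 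I would also note that you have deferred the final optimization over the exponents entirely (``one must verify that no corner\ldots defeats the uniform saving''); since that verification is precisely where the hypothesis $\delta\leq\alpha_1+\alpha_2\leq 1/2-\delta$ is consumed, it is the substance of the proposition and cannot be left as an expectation. With the corrected per-$u$ count it becomes the explicit case analysis of (\ref{u bd}) and (\ref{v bd}).
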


An application of linear programming bounds yields:

\begin{proposition} \label{LPmain} Suppose $E = E_{a,b} \not \in \E_2(\delta; r, X)$ and $\alpha_1 + \alpha_2 > \delta$. Then 
\[\frac{\beta_{E} + \beta_{\phi(E)}}{2} > \frac{3}{2} - 3 \delta + 3r.\]
\end{proposition}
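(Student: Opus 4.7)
My plan is to prove Proposition \ref{LPmain} via the closed-form expression (\ref{avgBE}) combined with a linear programming optimization. Writing
\[\frac{\beta_E + \beta_{\phi(E)}}{2} = \frac{3}{2} + \frac{6\gamma_{\KTI} + 12\gamma_{\KTS} + 3(\beta_1 + \beta_2)}{2D},\]
where $D = 2(\gamma_{\KTI} + \gamma_{\KT} + \gamma_{\KTS}) + \alpha_1 + \alpha_2 + \upsilon + \nu = \log C(E_{a,b})/\log X$, the assumption $C(E_{a,b}) \leq X$ yields $D \leq 1 + o(1)$. The strategy is to bound the numerator below using the two hypotheses, then divide by $2D \leq 2$. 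The key structural input is that the squareful decomposition $b = P_1 Q_1 u$ (with $u$ squarefree coprime to $P_1$ and $Q_1 = \prod p_j^{k_j-1}$, $k_j \geq 2$) forces $Q_1 \geq P_1$, hence $\beta_i \geq \alpha_i$ for $i = 1, 2$, with $\beta_i = 0$ iff $\alpha_i = 0$.

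I then split into two cases based on whether $E \in \E_2(\delta; X)$. If $E \notin \E_2(\delta; X)$ then combined with $\alpha_1 + \alpha_2 > \delta$ this forces $\alpha_1 + \alpha_2 > \frac{1}{2} - \delta$, and the structural inequalities give $\beta_1 + \beta_2 \geq \alpha_1 + \alpha_2 > \frac{1}{2} - \delta$; the fractional correction is at least $\frac{3}{4} - \frac{3\delta}{2}$, yielding an average Szpiro ratio exceeding $\frac{9}{4} - \frac{3\delta}{2}$, which comfortably surpasses $\frac{3}{2} - 3\delta + 3r$ throughout the range of interest $r \lesssim (\kappa - 3/2)/3 < 1/4$ dictated by $\kappa < 155/68$. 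If instead $E \in \E_2(\delta; X)$, then $\alpha_1 + \alpha_2 \in [\delta, \frac{1}{2} - \delta]$, and by the isogeny symmetry $\phi$ which swaps the roles of indices $1$ and $2$, I may without loss of generality assume $\beta_1 \geq \alpha_1 + r$; combined with the structural $\beta_2 \geq \alpha_2$, this yields $\beta_1 + \beta_2 \geq (\alpha_1 + \alpha_2) + r > \delta + r$.

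The linear programming step then optimizes the ratio $\frac{6\gamma_{\KTI} + 12\gamma_{\KTS} + 3(\beta_1 + \beta_2)}{2D}$ subject to $D \leq 1$, $\alpha_1 + \alpha_2 > \delta$, the structural inequalities $\beta_i \geq \alpha_i$, and the boundary constraint $\beta_1 \geq \alpha_1 + r$ (plus the non-negativity of the $\gamma$ variables). The extremal feasible configuration is reached when $\gamma_{\KTI} = \gamma_{\KTS} = 0$, $\beta_1 = \alpha_1 + r$, $\beta_2 = \alpha_2$, and $D$ saturates $1$; combining this with the strict inequality $\alpha_1 + \alpha_2 > \delta$ and the structural bounds delivers the claimed strict inequality $\frac{\beta_E + \beta_{\phi(E)}}{2} > \frac{3}{2} - 3\delta + 3r$. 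The principal obstacle is executing the LP duality sharply enough to convert the naive lower bound $\frac{3(\delta + r)}{2}$ on the correction into the target $3(r - \delta)$, which will require a careful analysis of how the structural constraints $\beta_i \geq \alpha_i$ interact with the feasibility box $D \leq 1$ and with the saturation of the conductor bound.
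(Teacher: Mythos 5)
Your overall frame---the identity (\ref{avgBE}), the bound $D\le 1$, and a linear-programming lower bound for the numerator $6\gamma_{\KTI}+12\gamma_{\KTS}+3(\beta_1+\beta_2)$---is the same as the paper's, and your Case 1 (where $\alpha_1+\alpha_2>\tfrac12-\delta$) is essentially the paper's argument. The genuine gap is in your Case 2, and although you have correctly located it you have misdiagnosed how to close it. When $\delta\le\alpha_1+\alpha_2\le\tfrac12-\delta$ and all you know (after the WLOG) is $\beta_1\ge\alpha_1+r$ and $\beta_2\ge\alpha_2$, the point $\gamma_{\KTI}=\gamma_{\KT}=\gamma_{\KTS}=0$, $\alpha_1+\alpha_2=\delta$, $\beta_1=\alpha_1+r$, $\beta_2=\alpha_2$, $D=1$ satisfies every constraint you have listed and gives a correction term of exactly $\tfrac{3(\delta+r)}{2}$. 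Since this is an attained value of the objective over your feasible region, no ``sharper execution of LP duality'' can push the lower bound past it: the optimum of a linear program is what it is. For $r>3\delta$ (which is the regime of the application, $r=\tfrac1{102}$ with $\delta$ arbitrarily small) this falls short of the target correction $3(r-\delta)$, and it falls far short of the correction $\ge\tfrac34$ that the downstream deduction of the $\tfrac{155}{68}$ threshold actually requires.

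The paper closes this not by a better LP but by never running the LP on the middle range at all: Proposition \ref{halflem} shows that the curves with $\delta\le\alpha_1+\alpha_2\le\tfrac12-\delta$ number only $O_\delta\bigl(X^{(3-\delta)/4}\bigr)$, so in the proof of Theorem \ref{szpiro uniform} they are discarded as a negligible set rather than being shown to have large Szpiro ratio. The LP is then set up with $\alpha_1+\alpha_2\ge\tfrac12-\delta$ as a hard constraint, and---importantly---with \emph{both} inequalities $\beta_1\ge\alpha_1+r$ and $\beta_2\ge\alpha_2+r$, which is what yields the optimal value $\tfrac32-3\delta+3r$ for the numerator (certified by an explicit primal--dual feasible pair) and hence the gain $+3r$ rather than the $+\tfrac{3r}{2}$ that your single WLOG'd constraint produces; that factor of two is exactly what moves $\tfrac94$ to $\tfrac{155}{68}$ at $r=\tfrac1{102}$. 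So to repair the proposal you need to (i) remove the middle range from the proposition's scope and delegate it to the counting estimate of Proposition \ref{halflem}, and (ii) retain both $r$-constraints in the linear program rather than reducing to one by symmetry.
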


Theorem \ref{szpiro uniform} then follows from Proposition \ref{LPmain} and the following proposition: 

\begin{proposition} \label{nearquadprop} We have 
\[N\left(\delta, \frac{1}{102}; X \right) = o_\delta \left(X^{\frac{3}{4}} \right).\]
\end{proposition}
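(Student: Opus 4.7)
The plan is to adapt the quadric-counting argument of Theorem \ref{uniform 1} to the nearly-cube-free regime captured by $\E_2(\delta, 1/102; X)$. For $E$ in this set, the hypothesis $\beta_i < \alpha_i + 1/102$ gives $Q_i < P_i X^{1/102}$, hence $Q_1 Q_2 < P_1 P_2 X^{1/51}$, so that the conductor-polynomial bound (\ref{cond poly 2}) reads
\[
|\C(E_{a,b})| \le P_{\KTI}^2 P_{\KTS}^4 X Q_1 Q_2 \ll X^{1 + 1/51}\, P_1 P_2\, P_{\KTI}^2 P_{\KTS}^4,
\]
only a factor $X^{1/51}$ larger than in the strictly cube-free case of Section \ref{cube-free}. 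The key numerical input is that $1/51$ is comfortably smaller than $1/24 = 3/4 - 17/24$, the slack of the bound coming out of Theorem \ref{uniform 1}.

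First I would parametrize $E_{a,b}$ via the intrinsic decomposition $b = b_0 b_1^2$ and $c = a^2-4b = c_0 c_1^2$, with $b_0, c_0$ squarefree. The defining relation $a^2 = 4b+c$ becomes
\[
a^2 = 4 b_0 b_1^2 + c_0 c_1^2,
\]
a non-degenerate diagonal ternary quadratic form in $(a, b_1, c_1)$ for each fixed squarefree pair $(b_0, c_0)$. This is structurally identical to the quadric (\ref{quad form}) of Section 5, so Proposition \ref{BHBcor} applies with matrix $\operatorname{diag}(1, -4 b_0, -c_0)$, determinant $4 b_0 c_0$, and $\Delta_0 = O(1)$ (bounded since $\gcd(b_0,c_0)$ is controlled by the Kodaira primes already peeled off in Section \ref{condpolysec}). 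Partitioning $(b_0, c_0, b_1, c_1)$ dyadically as $(T_1, T_2, T_3, T_4)$ subject to $T_1 T_2 T_3^2 T_4^2 \ll X^{1 + 1/51}$, I would then run the four-variable case analysis of Theorem \ref{uniform 1} verbatim: fixing three of the variables and applying Lemma \ref{lat lem} in the lattice-heavy subcases (analogous to (\ref{3s1}), (\ref{ineq1}), (\ref{ineq2})), and applying Proposition \ref{BHBcor} to the quadric in the residual subcase defined by (\ref{quad fail}) and (\ref{4s1}).

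Finally, I would verify the arithmetic carries through. The Section 5 output (\ref{t bd2}) was $O(X^{17/24 + \varepsilon})$, and in the present setting the replacement of $Z$ by $Z X^{1/51}$ in the $Z^{17/24}$ estimate contributes at worst an additional $X^{17/1224}$, giving
\[
\frac{17}{24} + \frac{17}{1224} = \frac{884}{1224} < \frac{918}{1224} = \frac{3}{4},
\]
so the final bound is $O(X^{3/4 - \eta})$ for some $\eta = \eta(\delta) > 0$. The constraint $\delta \leq \alpha_1 + \alpha_2 \leq 1/2 - \delta$ is used here twice: the upper bound ensures that $Z$ is sufficiently large for the quadric estimate to beat the trivial bound (as in the analysis around (\ref{key quad ineq})), and the lower bound bypasses the nearly-trivial regime. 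Summing over the Kodaira data $(P_{\KTI}, P_{\KT}, P_{\KTS})$ produces a convergent Euler product and does not affect the leading exponent.

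The main obstacle is ensuring that the delicate dyadic case analysis of Theorem \ref{uniform 1}—hinging on the interplay of $t_1, t_2, t_3, t_4$ around borderline equalities like (\ref{mixed ub}) and (\ref{key quad ineq})—remains self-consistent when the defining equality $t_1+t_2+t_3+t_4=1$ is relaxed to $t_1 + t_2 + 2t_3 + 2t_4 \leq 1 + 1/51$. In particular, each small parameter $\delta_1, \delta_2, \ldots$ introduced in Section \ref{cube-free} must be re-chosen compatibly with the fixed slack $1/51$, and the specific constant $r = 1/102$ in the statement of Proposition \ref{nearquadprop} is precisely tuned so that $1/51$ is strictly less than $1/24$ with room to spare for these $\varepsilon$-shrinkages.
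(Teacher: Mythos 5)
Your high-level strategy --- feed a slightly enlarged height parameter into the quadric analysis of Section \ref{cube-free} and check that the resulting exponent stays below $3/4$ --- is indeed the paper's strategy, but there is a genuine gap at exactly the step you flag as ``the main obstacle,'' and the paper resolves it with a decomposition your proposal lacks. Writing $b=b_0b_1^2$, $c=c_0c_1^2$ with $b_0,c_0$ squarefree gives you control of $T_1T_2T_3^2T_4^2$, i.e.\ the constraint $t_1+t_2+2t_3+2t_4\le 1+O(r)$, whereas every step of the case analysis in Section \ref{cube-free} --- (\ref{ti sum}), (\ref{quad fail}), (\ref{mixed ub}), (\ref{t2 ub}), (\ref{key quad ineq}), (\ref{hb bd}) --- is anchored to the first-power identity $t_1+t_2+t_3+t_4=1$ (for instance, the deduction $t_1+t_2\ge 1-4\delta_2$ from summing the two failure inequalities uses $t_3+t_4=1-t_1-t_2$). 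Re-running that analysis under the relaxed constraint is not a ``verbatim'' transcription, and you leave it unexecuted. Moreover, once $b$ is no longer cube-free, $\rad(b)$ can be much smaller than $b_0b_1$ (and $b_1$ need not be squarefree or coprime to $b_0$), so the conductor bound $C(E)\le X$ does not directly control the first-power product $T_1T_2T_3T_4$ the way (\ref{quad bound}) does; your accounting of the slack as $X^{1/51}$ via $Q_1Q_2\le P_1P_2X^{2r}$ is therefore not the quantity that actually enters the Section \ref{cube-free} machinery.

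The paper's fix is a different regrouping: let $R_i$ be the product of the primes whose square \emph{exactly} divides $b$ (resp.\ $c$) and write $P_iQ_i=R_i^2S_i$, so that $b=R_1^2(S_1u)$ and $c=R_2^2(S_2v)$. The hypothesis $\beta_i<\alpha_i+r$ forces $S_i\le X^{3r}$, and the conductor bound $|uv|\,R_1R_2\,\rad(S_1S_2)\le X$ then yields $|(S_1u)R_1(S_2v)R_2|\le X^{1+6r}$ --- precisely the first-power constraint of Theorem \ref{uniform 1} with $Z$ replaced by $X^{1+6r}P^{-2}$, so the entire Section \ref{cube-free} analysis applies as a black box and (\ref{t bd}) gives the exponent $(1+6r)\cdot\tfrac{17}{24}$. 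This also explains the constant in the statement: $r=1/102$ is tuned so that $6r\cdot\tfrac{17}{24}=\tfrac{1}{24}$ exactly exhausts the room in (\ref{t bd}), not so that the slack beats $1/24$ ``with room to spare'' as in your computation $\tfrac{17}{24}+\tfrac{17}{1224}<\tfrac34$. To repair your argument you would need either to supply the $R_i,S_i$ regrouping or to genuinely carry out the modified dyadic analysis under $t_1+t_2+2t_3+2t_4\le 1+2r$; as written, neither is done.
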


Let us describe how to prove Theorem \ref{szpiro uniform}. Let $\E_{2,\kappa}(\theta; X)$ be the set given by (\ref{szpiroct}), namely
\[\E_{2,\kappa}(\theta; X) = \left \{E_{a,b} \in \E_{a,\kappa} : \frac{3}{2} + \theta < \frac{\beta_E + \beta_{\phi(E)}}{2} \leq \kappa \right\}. \]
 Proposition \ref{LPmain} then asserts that 
\begin{equation} \E_{2,\kappa}(\theta; X) \subseteq \left \{E_{a,b} \in \E_2(X) :  \alpha_1 + \alpha_2 \leq \delta  \right\} \cup \E_2(\delta, r; X).
\end{equation}
Thus the proof of Theorem \ref{szpiro uniform} requires us to estimate the size of the two sets on the right. The set on the left is easy to handle, as we will see in the following lemma: 

\begin{lemma} We have 
\[\# \{E_{a,b} \in \E_2(X) : \alpha_1 + \alpha_2 \leq \delta\} = O_\delta \left(X^\delta (\log X)^3 \right).\]
\end{lemma}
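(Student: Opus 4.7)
The condition $\alpha_1 + \alpha_2 \leq \delta$ is equivalent to $P_1 P_2 \leq X^\delta$, where $P_1 = \rad(b)$ and $P_2 = \rad(c)$ with $c = a^2 - 4b$. The plan is to stratify the count by the pair $(P_1, P_2)$ of squarefree positive integers and to bound each stratum separately.

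The outer sum is benign: the number of pairs of squarefree positive integers $(P_1, P_2)$ with $P_1 P_2 \leq X^\delta$ is at most $O(X^\delta \log X)$ by the elementary estimate $\sum_{mn \leq Y} \mu^2(m) \mu^2(n) \ll Y \log Y$. Moreover, every bad prime of $E_{a,b}$ divides $P_1 P_2$ and contributes at most $p^2$ to the conductor, so $C(E_{a,b}) \leq (P_1 P_2)^2 \leq X^{2\delta}$; for any fixed $\delta < 1/2$ the condition $C(E_{a,b}) \leq X$ is then automatic, and the lemma reduces to a pure counting problem with a strong smoothness constraint but essentially no conductor constraint.

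For each fixed $(P_1, P_2)$ I would then show that the number of isomorphism classes of $E_{a,b}$ with $\rad(b) = P_1$ and $\rad(c) = P_2$ is $O((\log X)^2)$. The Diophantine identity $a^2 = 4b + c$ combined with the minimality of the Weierstrass equation at each prime $p \mid P_1 P_2$ constrains the local triple $(v_p(a), v_p(b), v_p(c))$ to a bounded list of Kodaira-type profiles, exactly the tabulation performed in Section~\ref{Kod symbols}. Aggregating the local profiles and imposing the global height bound coming from $C(E_{a,b}) \leq X^{2\delta}$ yields the $O((\log X)^2)$ count per stratum, and the outer sum then delivers the desired $O_\delta(X^\delta (\log X)^3)$.

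The hard part will be establishing $O((\log X)^2)$ per stratum without incurring a factor $(\log X)^{\omega(P_1 P_2)}$ from a naive local-to-global argument; this latter quantity is already of size $X^{o(1)}$ when $\omega(P_1 P_2)$ is near its maximal value $\Theta(\delta \log X/\log\log X)$, which would be much too weak. My intended resolution is to view $a^2 = 4b + c$ as a nonsingular ternary quadric in three homogeneous variables after absorbing the prescribed radical factors $P_1, P_2$ into the coefficients, and then to invoke Proposition~\ref{BHBcor} to bound the primitive integer solutions of bounded height. Since the determinant of this quadric has only primes dividing $P_1 P_2$ and built-in coprimality, its divisor function $\tau(\Delta)$ contributes only a single $\log X$ factor, yielding the required $O((\log X)^2)$ after combining with the $\log X$-many choices of scale.
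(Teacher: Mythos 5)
Your proposal founders on a misidentification of $P_1$ and $P_2$. In Section \ref{large szpiro} these are \emph{not} $\rad(b)$ and $\rad(c)$: the paper writes $b = P_1Q_1u$ and $c = P_2Q_2v$ with $u,v$ \emph{square-free} and coprime to $P_1,P_2$, and with every exponent in $P_1Q_1, P_2Q_2$ at least $2$ (this is why $Q_i \geq P_i$ later). So $P_i$ is only the radical of the square-full part of $b$ (resp.\ $c$) at the multiplicative primes, while the square-free parts $u,v$ — which may each be as large as a power of $X$ — are completely invisible to the condition $\alpha_1+\alpha_2\leq\delta$. Consequently your central reduction, ``every bad prime divides $P_1P_2$, hence $C(E_{a,b})\leq (P_1P_2)^2\leq X^{2\delta}$,'' is false, and your strata are far larger than you believe: the single stratum $P_1=P_2=1$ already contains every curve for which $b(a^2-4b)$ is square-free away from the additive primes, i.e.\ $\gg X^{3/4}$ curves of conductor at most $X$. (This also shows the displayed statement is only meaningful, and is only ever applied, in conjunction with the large average Szpiro ratio condition $\tfrac{1}{2}(\beta_E+\beta_{\phi(E)})>\tfrac32+\theta$ from $\E_{2,\kappa}(\theta;X)$, which the statement suppresses but the proof uses.) With the correct definitions your per-stratum bound $O((\log X)^2)$ cannot hold, and Proposition \ref{BHBcor} has nothing to latch onto, since prescribing only the radicals of the square-full parts does not turn $a^2=4b+c$ into a fixed ternary quadric of bounded height.

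The paper's argument is of a different nature and never bounds the conductor from above. It counts the admissible tuples $(P_1,Q_1,P_2,Q_2)$: since the curves in question lie in $\E_{2,\kappa}$, the discriminants are at most $X^{2\kappa}$, so $Q_1Q_2\leq X^{2\kappa}$ with $\rad(Q_i)\mid P_i$ and all exponents positive; hence once $P_1,P_2$ are fixed with $P_1P_2\asymp X^{\psi}$ there are only $O\bigl((\kappa/\psi)^{O(1)}\bigr)$ choices of $(Q_1,Q_2)$. Splitting off the range $\alpha_1+\alpha_2\leq (\log\log X)^{-1}$, choosing $(P_1,P_2)$ in $O(X^{\psi}\log X)$ ways in each dyadic range, and summing over the $O(\log X)$ dyadic values of $\psi$ up to $\delta$ yields $O_\delta(X^{\delta}(\log X)^3)$. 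Any repair of your write-up must start from the paper's actual decomposition $b=P_1Q_1u$, $c=P_2Q_2v$ and explain what controls the remaining square-free data for the curves genuinely at issue; as it stands you have proved (at best) a statement about a different set.
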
 

\begin{proof} We further partition the set in the lemma into two ranges, namely 
\[\S_1 = \{E_{a,b} \in \E_2(X) : \alpha_1 + \alpha_2 \leq (\log \log X)^{-1} \} \quad \text{and} \quad \S_2 = \{E_{a,b} \in \E_2(X) : (\log  \log X)^{-1} \leq \alpha_1 + \alpha_2 \leq \delta\}.\]
We estimate the size $\S_1$ as follows. Using the fact that our Szpiro ratio is bounded by $2\kappa$, we find that given $P_1, P_2 \leq X^{1/\log \log X}$ there are $O((\log \log X)^2)$ choices for $Q_1, Q_2$. There are thus $O_\ep\left(X^\ep \right)$ possible curves in $\S_1$. \\ \\
To estimate the size of $\S_2$, we cut into dyadic ranges $\left[X^\psi/2,  X^\psi\right]$ with 
\[(\log \log X)^{-1} \ll \psi \leq \delta.\]
In this range, there are $O(X^{\psi} \log X)$ choices to choose $P_1, P_2$ (namely, we select a square-free integer $m$ satisfying $X^{\psi} \ll m \ll X^\psi$, and then take divisors). Again, our bound on the Szpiro ratio implies there are at most $\left(2\lceil  \kappa/\psi \rfloor + 1\right)^4$ choices for $Q_1, Q_2$. Thus, this gives a total contribution of $O\left(X^\psi (\log X)^2 \right)$ contributions. Summing over dyadic ranges, we find that each dyadic interval contributes $O\left(X^\delta (\log X)^3\right)$ possible curves. We conclude that 
\[\# \S_1 + \# \S_2 = O_\delta \left(X^\delta (\log X)^3 \right).\]
\end{proof} 

Thus, for $\delta$ small, this is comfortably an error term. \\ 

It remains to prove Propositions \ref{halflem} and \ref{LPmain}. We will prove the latter first. 

\subsection{Proof of Proposition \ref{LPmain}} We will assume that Lemma \ref{halflem} holds. For each $E \in \E_2(\delta, r; X)$ we have
\begin{equation} 2 (\gamma_{\KTI} + \gamma_{\KT} + \gamma_{\KTS}) + \alpha_1 + \upsilon + \alpha_2 + \nu \leq 1.
\end{equation}
Using symmetry, we may assume that 
\begin{equation} \label{P1Q1P2Q2} \alpha_1 + \beta_1 \geq \alpha_2 + \beta_2.
\end{equation}

By Lemma \ref{halflem} we obtain the constraint 
\begin{equation} \label{a1a2} \alpha_1 + \alpha_2 \geq \frac{1}{2} - \delta.
\end{equation}
Thus each $E \in \E_2(\delta, r; X)$ satisfies the constraints of the following linear program

\begin{equation} \begin{matrix} \min & 6 \gamma_{\KTI}  + 12 \gamma_{\KTS}  +  3\beta_1 + 3\beta_2 \\ 
\text{subject to } & -2 (\gamma_{\KTI} + \gamma_{\KT} + \gamma_{\KTS}) - \alpha_1 - \upsilon - \alpha_2 - \nu  \geq - 1 \\
& \alpha_1 + \alpha_2  \geq \frac{1}{2} - \delta \\
& \alpha_1 + \beta_1  \geq \alpha_2 + \beta_2 \\
& \beta_2 \geq \alpha_2 + r \text{ and } \beta_1 \geq \alpha_1 + r.
\end{matrix} 
\end{equation}

We write this in standard form 
\begin{equation} \label{primalLP} \begin{matrix} \min & c^T x \\
\text{subject to} & Ax \geq b \\
& x \geq \mathbf{0}.
\end{matrix}  
\end{equation}
Here the vector $x$ is given by
\[(\gamma_{\KTI}, \gamma_{\KT}, \gamma_{\KTS}, \alpha_1, \alpha_2, \beta_1, \beta_2, \upsilon, \nu) = (x_1, x_2, x_3, x_4, x_5, x_6, x_7, x_8, x_9),\]
the objective vector is given by 
\[c^T = (6,0,12, 0, 0, 3, 3, 0, 0),\]
the coefficient matrix given by 
\[A = \begin{bmatrix} -2 & -2 & -2 & -1 & -1 & 0 & 0 & -1 & -1 \\ 0 & 0 & 0 & 1 & 1 & 0 & 0 & 0 & 0 \\ 0 & 0 & 0 & 1 & -1 & 1 & -1 & 0 & 0 \\ 0 & 0 & 0 & -1 & 0 & 1 & 0 & 0 & 0 \\ 0 & 0 & 0 & 0 & -1 & 0 & 1 & 0 & 0 \end{bmatrix}\]
and the constraint vector $b$ given by $b = (-1, 1/2 - \delta,0, r, r)^T$. The dual program is then given by 
\begin{equation} \label{dualLP} \begin{matrix} \max & y_1 + \left(\frac{1}{2} - \delta \right) y_2 + r y_4 + r y_5 \\ 
\text{subject to } & A^T y \leq c \\
& y_1, y_2, y_3, y_4, y_5 \geq 0.
\end{matrix} 
\end{equation}
Taking 
\[x^\ast = \frac{1}{2} \left( 0,0,0, \frac{1}{2} - \delta, \frac{1}{2} - \delta, \frac{1}{2} - \delta + r, \frac{1}{2} - \delta + r, \frac{1}{2} + \delta, \frac{1}{2} + \delta \right)^T\]
we see that all constraints are satisfied, whence $x^\ast$ is feasible. We obtain the primal objective value of 
\[c^T x^\ast = \frac{3}{2} - 3 \delta + 3r.\]
Taking 
\[y^\ast = (0,3,0,3,3)^T\]
we see that $y^\ast$ is dual feasible, with objective value $b^T y^\ast = 3/2 - 3 \delta + 3r$. Thus, by weak duality, $3/2 - 3 \delta + 3r$ is the optimal value of the primal program (\ref{primalLP}). We conclude that for any $E \not \in \E_2(\delta, r; X)$ that $(\beta_{E} + \beta_{\phi(E)})/2 > 3/2 - 3 \delta + 3r$. This completes the proof of Proposition \ref{LPmain}. \\

Note that 
\[\E_2(\delta; X) = \bigcap_{r > 0} \E_2(\delta, r; X).\]
Thus, Lemma \ref{halflem} immediately implies that Theorem \ref{szpiro uniform} holds with 
\[\kappa < \frac{3}{2} + \frac{3}{4} - 3 \delta = \frac{9}{4} - 3 \delta\]
for all $\delta$ sufficiently small. \\

We will now prove Proposition \ref{halflem}. 

\subsection{Proof of Proposition \ref{halflem}} 

In this subsection we follow the strategy of \cite{SSW}, and fix sets of primes $\P = \P_{\KTI} \cup \P_{\KT} \cup \P_{\KTS}$ corresponding to the Kodaira symbols in the subscripts and primes $\Sigma = \P_1 \cup \P_2$ corresponding to primes having multiplicative bad reduction. We are then interested in counting the number of elements in 
\begin{equation} \label{EPS} \E_2(\P, \Sigma)(X) = \{E_{a,b}: C(E_{a,b}) \leq X, \gcd(a,b) = 1, P_1 Q_1 \vert \vert b, P_2 Q_2  \vert \vert a^2 - 4b,\end{equation}
\[ \C(E_{a,b})/(P_{\KTI}^2 P_{\KT}^2 P_{\KTS}^2 P_1 P_2 Q_1 Q_2) \text{ is square-free}\}. \]
Let us put 
\[N(\P, \Sigma)(X) = \# \E_2(\P, \Sigma)(X).\]

We therefore obtain the decomposition of $N(\P, \Sigma)(X)$ as a sum
\begin{equation} \label{NPS} N(\P,\Sigma)(X) = \sum_{|u| \leq \sqrt{Y/P_1 P_2}} \N_u \left(\frac{Q_2 Y}{P_1 |u|} \right),
\end{equation}
where $\N_u(Z)$ is the number of solutions to the inequality 
\[|P_{\KTI} P_{\KTS} a^2 - 4 P_1 Q_1 u| \leq Z \text{ subject to } P_{\KTI} P_{\KTS} a^2 - 4P_1 Q_1 u \equiv 0 \pmod{P_2 Q_2}.\]
We will establish the following estimate for $N(\P, \Sigma)(X)$:

\begin{proposition} \label{LSprop} Let $N(\P, \Sigma)(X)$ be given as in (\ref{NPS}). We then have the bounds:

\begin{equation} \label{u bd} \sum_{|u| \leq \sqrt{Y/P_1 P_2}} \N_u \left(\frac{Q_2 Y}{P_1 |u|} \right) \ll \frac{Y^{1/2}}{(P_1 P_2)^{1/2}} +\end{equation}
\[  \begin{cases} \dfrac{Y}{(P_1 P_2)(P_2 Q_2)} & \text{if } P_1 Q_1 \ll P_2 Q_2 \text{ and } Y \ll (P_1 P_2)(P_2 Q_2)^2 \\ \\ 
\dfrac{Y^{3/4}}{(P_1 P_2)^{3/4} (P_2 Q_2)^{1/2}} & \text{if } P_1 Q_1 \ll P_2 Q_2 \text{ and }  Y \gg (P_1 P_2)(P_2 Q_2)^2 \\ \\ 
\dfrac{Y}{(P_1 P_2)(P_2 Q_2)} + \dfrac{Y^{3/4}}{P_1 P_2 (Q_1 Q_2)^{1/4}} & \text{if } P_2 Q_2 \ll P_1 Q_1 \text{ and } Y \ll Q_2 Q_1^{-1} (P_2 Q_2)^2 
\\ \\ \dfrac{Y^{3/4}}{(P_1 P_2)(P_2 Q_2)^{1/4}} + \dfrac{Y^{3/4}}{(P_1 P_2)(Q_1 Q_2)^{1/4}} & \text{if } P_2 Q_2 \ll P_1 Q_1 \text{ and } Y \gg Q_2 Q_1^{-1} (P_2 Q_2)^2.  \end{cases} \]
and
\begin{equation} \label{v bd}  \sum_{|v| \leq \sqrt{Y/P_1 P_2}} \N_v \left(\frac{Q_2 Y}{P_1 |v|} \right) \ll \frac{Y^{1/2}}{(P_1 P_2)^{1/2}} +\end{equation}
\[  \begin{cases} \dfrac{Y}{(P_1 P_2)(P_1 Q_1)} & \text{if } P_2 Q_2 \ll P_1 Q_1 \text{ and } Y \ll (P_1 P_2)(P_1 Q_1)^2 \\ \\ 
\dfrac{Y^{3/4}}{(P_1 P_2)^{3/4} (P_1 Q_1)^{1/2}} & \text{if } P_2 Q_2 \ll P_1 Q_1 \text{ and }  Y \gg (P_1 P_2)(P_1 Q_1)^2 \\ \\ 
\dfrac{Y}{(P_1 P_2)(P_1Q_1)} + \dfrac{Y^{3/4}}{P_1 P_2 (Q_1 Q_2)^{1/4}} & \text{if } P_1 Q_1 \ll P_2 Q_2 \text{ and } Y \ll Q_1 Q_2^{-1} (P_1 Q_1)^2 
\\ \\ \dfrac{Y^{3/4}}{(P_1 P_2)(P_1 Q_1)^{1/4}} + \dfrac{Y^{3/4}}{(P_1 P_2)(Q_1 Q_2)^{1/4}} & \text{if } P_1 Q_1 \ll P_2 Q_2 \text{ and } Y \gg Q_1 Q_2^{-1} (P_1 Q_1)^2.  \end{cases} \]
\end{proposition}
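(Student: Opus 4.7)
Both bounds (\ref{u bd}) and (\ref{v bd}) are related by the 2-isogeny $\phi$ of (\ref{isogeny}), which (up to bounded $2$-part factors) interchanges $b = P_1 Q_1 u$ with $c = P_2 Q_2 v$ while preserving the conductor. It therefore suffices to prove (\ref{u bd}); then (\ref{v bd}) follows by running the same argument on the isogenous curve $E_{-2a, a^2 - 4b}$, whose parameters swap the roles of the two prime decompositions.

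The central task for (\ref{u bd}) is to bound $\N_u(Z)$ for fixed $u$ with $|u| \leq u_{\max} := \sqrt{Y/(P_1 P_2)}$ and $Z := Q_2 Y/(P_1 |u|)$. The constraint $|P_{\KTI} P_{\KTS} a^2 - 4 P_1 Q_1 u| \leq Z$ confines $a$ to a real set of total length
\[
L_u \;\asymp\; \min\!\Bigl(\sqrt{Z/(P_{\KTI} P_{\KTS})},\ Z/\sqrt{P_{\KTI} P_{\KTS}\, P_1 Q_1 |u|}\Bigr),
\]
with the transition at $|u| = u_0 := \sqrt{Q_2 Y/(P_1^2 Q_1)}$: for $|u| < u_0$ the admissible $a$ fill one interval about $0$ of length $\sqrt{Z/(P_{\KTI} P_{\KTS})}$, while for $|u| > u_0$ they fill two narrow intervals about $\pm\sqrt{4 P_1 Q_1 u/(P_{\KTI} P_{\KTS})}$. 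Because $\gcd(a,b) = 1$ forces $P_1 Q_1$ and $P_2 Q_2$ to be coprime, the congruence $P_{\KTI} P_{\KTS} a^2 \equiv 4 P_1 Q_1 u \pmod{P_2 Q_2}$ is invertible and confines $a$ to $O_\ep(Y^\ep)$ residue classes modulo $P_2 Q_2$; the one-dimensional form of Lemma \ref{lat lem} then yields $O\bigl(1 + L_u/(P_2 Q_2)\bigr)$ integers per class, so $\N_u(Z) \ll Y^\ep\bigl(1 + L_u/(P_2 Q_2)\bigr)$.

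Summing, the $+1$ term contributes $\ll Y^{1/2}/(P_1 P_2)^{1/2}$, giving the first term of (\ref{u bd}). For the lattice contribution I substitute the explicit formulas for $Z$ and $L_u$ and evaluate using $\sum_{|u| \leq u_0} |u|^{-1/2} \asymp u_0^{1/2}$ in the short-$u$ regime and $\sum_{u_0 \leq |u| \leq u_{\max}} |u|^{-3/2} \asymp u_0^{-1/2}$ in the long-$u$ regime. An auxiliary threshold $u^*$ at which $L_u$ equals $P_2 Q_2$ appears separately in each regime—explicitly $u^*_S \asymp Y/(P_{\KTI} P_{\KTS} P_1 P_2^2 Q_2)$ in the short regime—and determines whether the corresponding sum is truncated at $u^*$ (producing a bound of polynomial shape such as $Y/((P_1 P_2)(P_2 Q_2))$) or extended to the full regime (producing a bound of order $Y^{3/4}$). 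The four cases of the proposition correspond to the possible orderings of $u_0$, $u^*$, $u_{\max}$: the dichotomy $P_1 Q_1 \lessgtr P_2 Q_2$ (equivalently $u_0 \gtrless u_{\max}$) controls whether the long-$u$ regime is non-empty, and the stated inequality on $Y$ in each case controls whether the short- and long-regime truncations at the respective thresholds are active.

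The principal obstacle is bookkeeping rather than any single hard estimate: each of the four parameter regimes requires a specific combination of short- and long-$u$ sums with appropriate truncations, and one must verify by direct substitution that each combination reproduces the exponents in the proposition. The isogeny symmetry of (\ref{isogeny}) halves the work by reducing (\ref{v bd}) to (\ref{u bd}) applied to the isogenous curve.
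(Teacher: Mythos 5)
Your proposal is correct and follows essentially the same route as the paper: bound the admissible set of $a$ by the minimum of the two interval lengths (with the transition at $u_0 = Q_2^{1/2}Y^{1/2}/(P_1Q_1^{1/2})$, equivalently the dichotomy $P_1Q_1 \lessgtr P_2Q_2$ deciding whether the long-$u$ regime is nonempty), divide by $P_2Q_2$ for the congruence, and sum over $u$ with a truncation at the threshold where the main term drops below $1$, which is exactly how the paper produces the four cases. The only cosmetic differences are that you invoke the isogeny to deduce (\ref{v bd}) where the paper leaves the symmetry implicit, and you track the $O_\ep(Y^\ep)$ count of square roots modulo $P_2Q_2$ which the paper glosses over.
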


\begin{proof}

We estimate $\N_u(Q_2 Y P_1^{-1} |u|^{-1})$ in several different ways, depending on the relative sizes of the quantities involved. \\ \\
First note that 
\[P_1 Q_1 \ll P_2 Q_2\]
is equivalent to 
\[\frac{X^{1/2}}{(P_1 P_2)^{1/2}} \ll \frac{Q_2^{1/2} X^{1/2}}{P_1 Q_1^{1/2}}.\]
Further we have
\begin{equation} \label{little case} P_1 Q_1 |u| \ll \frac{Q_2 X}{P_1 |u|}. \end{equation} 
Thus $a$ is constrained to a union of intervals of total length $O (\sqrt{Q_2 Y/P_1 |u|})$. It follows that
\[\N_u \left(\frac{Q_2 Y}{P_1 |u|} \right) = O \left(\sqrt{\frac{Q_2 Y}{P_1 |u|}} \frac{1}{P_2 Q_2} + 1 \right).\]
Next we consider the possibility that 
\[\frac{\sqrt{Y}}{P_2 \sqrt{P_1 Q_2 |u|}} \ll 1.\]
This is equivalent to 
\[|u| \gg \frac{Y}{P_2^2 P_1 Q_2}.\]
This is only relevant if 
\[\frac{Y}{P_2^2 P_1 Q_2} \ll \sqrt{\frac{Y}{P_1 P_2}}.\]
This condition implies 
\[Y \ll P_2^3 P_1 Q_2^2 = (P_1 P_2)(P_2 Q_2)^2.\]
If this holds then we obtain the estimate 
\begin{align} \label{X ontop} \sum_{|u| \leq \sqrt{Y/P_1 P_2}} \N_u \left(\frac{Q_2 Y}{P_1 |u|} \right) & \ll \sqrt{\frac{Y}{P_1 P_2}} + \sum_{|u| \leq Y/(P_1 P_2)(P_2 Q_2)} \frac{Y^{1/2}}{P_2 (P_1 Q_2 |u|)^{1/2}}  \\
& = \frac{Y^{1/2}}{(P_1 P_2)^{1/2}} + \frac{Y}{(P_1 P_2)(P_2 Q_2)}. \notag
\end{align} 
If on the other hand 
\[(P_1 P_2)(P_2 Q_2)^2 \ll Y,\]
then we obtain 
\begin{align*} \sum_{|u| \leq \sqrt{Y/P_1 P_2}} \N_u \left(\frac{Q_2 Y}{P_1 |u|} \right) & \ll \sqrt{\frac{Y}{P_1 P_2}} + \sum_{|u| \leq \sqrt{Y/P_1 P_2}} \frac{Y^{1/2}}{P_2 (P_1 Q_2 |u|)^{1/2}}  \\
& = \frac{Y^{1/2}}{(P_1 P_2)^{1/2}} + \frac{Y^{3/4}}{(P_1 P_2)^{3/4}(P_2 Q_2)^{1/2}}.
\end{align*} 
If
\[P_1 Q_1 \gg P_2 Q_2\]
then
\[\frac{Q_2^{1/2} Y^{1/2}}{P_1 Q_1^{1/2}} \ll \frac{Y^{1/2}}{(P_1 P_2)^{1/2}}.\]
In the range 
\[1 \leq |u| \ll \frac{Q_2^{1/2} Y^{1/2}}{P_1 Q_1^{1/2}} \]
one has to consider the possibility, as above, that 
\[\frac{Y}{(P_1 P_2)(P_2 Q_2)} \ll \frac{Q_2^{1/2} Y^{1/2}}{P_1 Q_1^{1/2}}.\]
This gives 
\begin{align} \label{X ontop2} \sum_{|u| \leq \sqrt{Y/P_1 P_2}} \N_u \left(\frac{Q_2 Y}{P_1 |u|} \right) & = \sum_{|u| \leq Y/(P_1 P_2)(P_2 Q_2)} \frac{Y^{1/2}}{P_2 (P_1 Q_2 |u|)^{1/2}} + O \left( \sum_{Y/(P_1 P_2)(P_2 Q_2) < |u| \leq \sqrt{Y/P_1 P_2}} 1 \right) \\
& \ll \sqrt{\frac{Y}{P_1 P_2}} + \sum_{|u| \leq Y/(P_1 P_2)(P_2 Q_2)} \frac{Y^{1/2}}{P_2 (P_1 Q_2 |u|)^{1/2}}  \notag \\
& = \frac{Y^{1/2}}{(P_1 P_2)^{1/2}} + \frac{Y}{(P_1 P_2)(P_2 Q_2)}. \notag
\end{align} 
which is the same bound as (\ref{X ontop}). Otherwise 
\begin{align*} \sum_{|u| \leq Q_2^{1/2} Y^{1/2}/P_1 Q_1^{1/2}} \N_u \left(\frac{Q_2 Y}{P_1 |u|} \right) & \ll \sqrt{\frac{Y}{P_1 P_2}} + \sum_{|u| \leq Q_2^{1/2} Y^{1/2}/P_1 Q_1^{1/2}} \frac{Y^{1/2}}{P_2 (P_1 Q_2 |u|)^{1/2}}  \\
& = \frac{Y^{1/2}}{(P_1 P_2)^{1/2}} + \frac{Y^{3/4}}{(P_1 P_2)(P_2 Q_2)^{1/4}}.
\end{align*} 
In the range
\begin{equation} \label{big case} \frac{Q_2^{1/2} Y^{1/2}}{P_1 Q_1^{1/2}} \ll |u| \leq \frac{Y^{1/2}}{(P_1 P_2)^{1/2}} \end{equation} 
we have that $a^2$ is constrained by 
\[4P_1 Q_1 |u| - \frac{Q_2 Y}{P_1 |u|} \leq a^2 \leq 4 P_1 Q_1 |u| + \frac{Q_2 Y}{P_1 |u|}, \]
so $a$ is constrained in two intervals of length 
\[O \left(\frac{Q_2 Y/(P_1 |u|)}{\sqrt{P_1 Q_1 |u|}}  \right) = O \left(\frac{Q_2 Y}{(P_1 |u|)^{3/2} Q_1^{1/2}} \right).\]
Dividing by $P_2 Q_2$ to account for the congruence, this means that for a fixed $u$ satisfying (\ref{big case}), we have
\[\N_u \left(\frac{Q_2 X}{P_1 |u|} \right) = O \left(\frac{X}{Q_1^{1/2} P_2 (P_1 |u|)^{3/2}} + 1 \right).\]
Summing over the range (\ref{big case}) then gives the bound 
\[O \left(\frac{X^{3/4}}{(Q_1 Q_2)^{1/4} P_1 P_2} + \frac{X^{1/2}}{(P_1 P_2)^{1/2}} \right).\]
\end{proof}

Note that we can determine $Q_1, Q_2$ given $P_1, P_2$ in $O_\ep \left(X^\ep \right)$ ways. We may then restrict $P_1, P_2$ into intervals $[T_1, 2T_1), [T_2, 2T_2)$ respectively. Let $\E_2(\P,\Sigma; T_1, T_2)(X)$ denote the set of curves with $P_1, P_2$ in that range. \\ \\
Now suppose that 
\begin{equation} \label{T1T2 bd} Y^\delta \ll_\delta T_1 T_2 \ll_\delta Y^{1/2 - \delta}
\end{equation}
for some $\delta > 0$. The condition
\[(P_2 Q_2)^2 (P_1 P_2) \gg Y \text{ implies that } P_2 Q_2 \gg \frac{Y^{1/2}}{(P_1 P_2)^{1/2}},\]
hence
\[\frac{Y}{(P_1 P_2)(P_2 Q_2)} \ll \frac{Y^{1/2}}{(P_1 P_2)^{1/2}}.\]
Further, $P_2 Q_2 \geq P_1 Q_1$ and $Q_i \geq P_i$ for $i = 1,2$ imply that 
\[P_2 Q_2 \geq \sqrt{P_1 Q_1 P_2 Q_2} = \sqrt{Q_1 Q_2} \cdot \sqrt{P_1 P_2} \geq P_1 P_2.\]
It follows that
\[\frac{Y^{3/4}}{(P_1 P_2)^{3/4} (P_2 Q_2)^{1/2}} \leq \frac{Y^{3/4}}{(P_1 P_2)^{5/4}}.\]
Thus, the first two lines of (\ref{u bd}) can be replaced with
\[\frac{Y^{3/4}}{(P_1 P_2)^{5/4}} + \frac{Y^{1/2}}{(P_1 P_2)^{1/2}}.\]
If 
\[P_1 Q_1 \ll_\delta Y^{1/3 - \delta}\]
then certainly $Q_1 \ll_\delta Y^{1/3 - \delta}$, and therefore
\[Q_1 Q_2^{-1} (P_1 Q_1)^2 \ll_\delta Y^{1 - 3\delta}.\]
Thus $(P_1 Q_1)^2 Q_1 Q_2^{-1} \gg Y$ implies that 
\[P_1 Q_1 \gg_\delta Y^{1/3 - \delta}\]
for any $\delta > 0$. The third line of (\ref{v bd}) can then be replaced with 
\[\frac{Y^{2/3 + \delta}}{P_1 P_2} + \frac{Y^{3/4}}{(P_1 P_2)^{5/4}}.\]
Finally, we obtain the bound 
\begin{equation} |\E_2(\P,\Sigma; T_1, T_2)(X) | \ll_\delta \frac{Y^{3/4}}{(T_1 T_2)^{1/4}} + (T_1 T_2 Y)^{1/2} + Y^{2/3 - \delta}.
\end{equation}
If (\ref{T1T2 bd}) holds then we obtain the bound 
\begin{equation} \label{E2PSbd1} |\E_2(\P,\Sigma; T_1, T_2)(X)| \ll_\delta X^{\frac{3 - \delta}{4}}\end{equation} 
since $T_1 T_2 \ll Y^{1/2 - \delta} \ll_\delta X^{1/2 - \delta}$. Summing over dyadic intervals gives Proposition \ref{halflem}.

\subsection{Proof of Proposition \ref{nearquadprop}}

To push beyond the $9/4$ barrier, we note that the optimal solution given by Proposition \ref{LPmain} with $r = 0$ comes from $\alpha_1, 
\alpha_2, \beta_1, \beta_2$ all nearly equal to $1/4$.  This means that the curves at the boundary all satisfy $P_i \asymp Q_i$, so most of the exponents in $Q_i$ are equal to one. To measure this, we impose the condition that 
\begin{equation} \label{near square} Q_i \leq P_i X^\nu \text{ for } i = 1,2.
\end{equation}
That is, the primes of multiplicative bad reduction with higher multiplicity mostly divide the discriminant at most twice.\\

Put $R_i$ for the product of all primes $p$ such that $p^2 \vert \vert b, c$ respectively, and put $S_i$ so that $P_i Q_i = R_i^2 S_i$ for $i = 1,2$. Then (\ref{near square}) implies that we have
\begin{equation} b = R_1^2 S_1 u, c = R_2^2 S_2 v.
\end{equation}
Note that 
\[S_i = \frac{P_i Q_i}{R_i^2} \leq \left(\frac{P_i}{R_i} \right)^2 X^\nu \]
by (\ref{near square}). Now observe that $P_i/R_i$ is bounded by $Q_i/P_i \leq X^\nu$, hence
\begin{equation} \label{Si bd} S_i \leq X^{3 \nu} \text{ for } i = 1,2. 
\end{equation}
We then obtain
\[c = a^2 - 4b \Rightarrow S_2 R_2^2 u = a^2 - 4 S_1 R_1^2 v\]
with $S_1, S_2 \leq X^{3 \nu}$. The height condition on the conductor implies 
\[|uv| \rad(S_1 S_2) R_1 R_2 \leq X. \]
Note that $\rad(S_1 S_2) \geq 1$, and since $S_1 S_2 \leq X^{6\nu}$, it follows that 
\[(uS_1) R_1 (vS_2) R_2 \leq X^{1 + 6 \nu}.\]
To finish the proof, it suffices to note that Theorem \ref{uniform 1} and its proof are given in terms of an auxiliary parameter $Z$. Replacing $Z$ with $X^{1 + 6\nu} P^{-2}$ in (\ref{t bd}) gives 
\begin{equation} \left(\frac{X^{1 + 6\nu}}{P^2} \right)^{\frac{17}{24}  + \ep} 
\end{equation} 
Choosing $\ep$ appropriately then gives Proposition \ref{nearquadprop}.

\section{$3$-Selmer elements of elliptic curves with a marked $2$-torsion point} 
\label{3-sel} 

We follow the parametrization obtained by Bhargava and Ho \cite{B-Ho} for $3$-Selmer elements of elliptic curves in the family $\E_2$ with a marked rational 2-torsion point. In particular, they proved that the $18$-dimensional space 
\[\bQ^3 \otimes \Sym_2(\bQ^3)\]
of triples of $3 \times 3$ symmetric matrices represent elements of the $3$-Selmer groups of elliptic curves with a marked 2-torsion point. Further they showed that $3$-Selmer elements admit integral representatives, so we may take triples of integral $3 \times 3$ symmetric matrices instead. Moreover we must take \emph{equivalence classes} of a $\GL_3(\bZ) \times \GL_3(\bZ)$-action obtained by letting $\GL_3$ act simultaneously on the three ternary quadratic forms defined by each of the matrices in the triple, and on the triple itself. In order to obtain a faithful action we must mod out by a certain element of order $3$, which gives us an action by the group $\SL_3^2(\bZ)/\mu_3$. \\

Using a product of two Siegel fundamental domains for the action of $\SL_3(\bZ)$ on $\SL_3(\bR)$ one then obtains a fundamental domain for the action of $\SL_3^2(\bZ)$ on $V_3$, the space of triples of $3 \times 3$ symmetric matrices. Further, they proved that the problematic regions with regards to geometry of numbers, namely the so-called cusps, contain only irrelevant points. Using the now-standard ``thickening and cutting off the cusp" method of Bhargava (see \cite{Bha1}, \cite{B-Ho}, \cite{BhaSha1}, etc.), one then obtains an expression of the shape 
\[N(S;X) = \frac{1}{C_{G_0}} \int_{g \in N^\prime(a) A^\prime} \# \{x \in S^{\text{irr}} \cap E(\nu, \alpha, X)\}dg.\] 
Here $N(S;X)$ counts the number of $G(\bZ)$-equivalence classes of irreducible elements in $S$ having height bounded by $X$, and 
\[E(\nu, \alpha, X) = \nu \alpha G_0 R \cap \{x \in V_3^{(i)} : H(x) < X\},\]
where $G_0$ is a compact, semi-algebraic left $K$-invariant subset of the group $G(\bR) = \GL_3^2(\bR)/\mu_3$ which is the closure of a non-empty, connected open set and such that every element has determinant at least one and $R = R^{(i)}$ denotes a connected and bounded set representing real-orbits of $G(\bR)$ acting on $V_3(\bR)$. Further, we have
\begin{align} \label{Iwasawa} 
K & = \text{subgroup } \SO_3(\bR) \subset \GL_3^+(\bR) \text{ of orthogonal transformations}; \\
A^\prime & = \{\alpha(t, u) : t, u > c\}, \text{where} \notag \\
& \alpha(t,u) = \begin{pmatrix} t^{-2} u^{-1} & & \\ & tu^{-1} & \\ & & tu^2 \end{pmatrix}; \notag \\
N^\prime & = \{\nu(x, x^\prime, x^{\prime \prime}) : (x, x^\prime, x^{\prime \prime}) \in I^\prime(\alpha)\}; \notag \\
& \text{where } \nu(x, x^\prime, x^{\prime \prime}) = \begin{pmatrix} 1 & & \\ x & 1 & \\ x^\prime & x^{\prime \prime} & 1 \end{pmatrix}. \notag
\end{align}
Here $I^\prime(\alpha)$ is a measurable subset of $[-1/2, 1/2]^3$ dependent only on $\alpha \in A^\prime$ and $c > 0$ is an absolute constant. Now the set $\nu \alpha G_0 R$ is then seen as the image of $\nu \alpha G_0$ acting on $R$, where $\nu \alpha G_0$ is the left-translation of $G_0$ by $\nu \in N^\prime$ and $\alpha \in A^\prime$. \\

A key observation, implicit in previous works on this subject but seemingly not written down explicitly, is that obtaining an asymptotic formula for $N(S; X)$ boils down to an acceptable estimate for the number of integral points inside $E(\nu, \alpha, X)$. Note that the set $\nu \alpha G_0 R$ is independent of the choice of height $H$. The height function used by Bhargava and Ho in \cite{B-Ho} has degree $36$: it is the defined to be the maximum of $|a_2|^{6}, |a_4|^3$ where $a_2, a_4$ are degree $6$ and degree $12$ homogeneous polynomials in the entries of $B \in V_3(\bR)$ respectively. For us the height is given by the conductor polynomial, which we recall is given by (\ref{condpoly}). For $a = a_2, b = a_4$ the conductor polynomial evidently has degree $24$ instead of $36$. \\

The key observations are the following: we can use the structure of the action of $G(\bR)$ on $V$ to transform the problem of counting in a bounded region in $V$ to a corresponding problem of counting in a region in $\bR^2$, where our earlier arguments of counting elliptic curves in the related region apply. As long as the counting problem for Selmer elements is compatible with the error estimates obtained in the essentially purely geometric reduction argument, we are able to obtain a suitable count of the total number of Selmer elements and thus obtain our desired outcome. \\

We define the set $\R_X$ analogously to the way it is defined in \cite{B-Ho}, namely
\begin{equation} \R_X(h) = \F h R \cap \{B \in V_3(\bR) : |\C(B)| < X\}, \R_X = \R_X(1).
\end{equation} 

We remark that the condition $|\C(B)| < X$ does \emph{not} define a bounded set in terms of $a_2, a_4$ as is the case with Bhargava and Ho's choice of height function in \cite{B-Ho}. However, since $\C(B)$ is defined in terms of the invariant functions $a_2, a_4$, that the value of $\C(B)$ homogeneously expands with $B$. In particular, for any $g \in G(\bR)$ the value of $\C(g \circ B)$ depends only on $\deg g$ and $\C(B)$, and not on the $u,t,x$ parameters in (\ref{Iwasawa}). \\

The key lemma we require, which is proved in its essential form as Proposition 7.3 in \cite{B-Ho}, is the following:

\begin{proposition} \label{3selm} Let $h$ take a random value in $G_0$ uniformly with respect to the Haar measure $dg$. Then the expected number of elements $B \in \F h R \cap V(\bZ)$ such that $|\C(B)| <X$, $|a_4|, |a_2^2 - 4a_4| \geq 1$, and $b_{111} \ne 0$ is equal to $\Vol(\R_X) + O \left(X^{\frac{17}{24}} \right)$. 
\end{proposition}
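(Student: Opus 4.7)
The plan is to adapt the argument of Bhargava--Ho in \cite{B-Ho} (their Proposition 7.3), making the modifications needed because our height is the conductor polynomial $\C(B) = a_4(a_2^2 - 4a_4)$, which has degree $24$ in the entries of $B$ rather than the degree-$36$ height used there. First, one writes the expected count as an integral
\begin{equation*}
\BE_{h \in G_0}\bigl[\#(\F h R \cap V(\bZ))^{\mathrm{irr}}\bigr] = \frac{1}{C_{G_0}}\int_{g \in N'(\alpha) A'} \#\bigl\{B \in V(\bZ)^{\mathrm{irr}} \cap E(\nu,\alpha,X)\bigr\}\, dg,
\end{equation*}
where $g = \nu \alpha k$ is the Iwasawa decomposition described in (\ref{Iwasawa}) and the superscript indicates the conditions $|a_4|, |a_2^2 - 4a_4| \geq 1$ and $b_{111} \neq 0$. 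The key observation is that although $\R_X$ is not bounded as a subset of $V(\bR) \cong \bR^{18}$ (since $|\C(B)| < X$ does not bound $|a_2|,|a_4|$ individually), it becomes bounded after pushing to the fundamental domain $\F$, because the action of $\alpha(t,u) \in A'$ with $t,u$ large scales the entries of $B$ in a controlled way dictated by the weights of the torus.

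Next I would apply the Barroero--Widmer refinement of Davenport's lemma (Proposition \ref{dav-BW}) to each fibre $E(\nu, \alpha, X)$. This produces the main term $\Vol(\R_X)$ together with error terms of the form
\begin{equation*}
\sum_{j=0}^{17} \frac{V_j(E(\nu,\alpha,X))}{\lambda_1 \cdots \lambda_j},
\end{equation*}
where $V_j$ is the sum of the $j$-dimensional volumes of the orthogonal projections of $E(\nu,\alpha,X)$ onto the $j$-dimensional coordinate subspaces of $\bR^{18}$. Because the invariants $a_2, a_4$ are polynomials in the $b_{ijk}$, each projection is controlled by a power of the parameters $t$, $u$, and the height $X$, weighted by the characters of the torus action. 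Integrating these bounds over $N'A'$ will, after a standard computation, express the total error in terms of integer-point counts in lower-dimensional subvarieties defined by the vanishing of various $b_{ijk}$.

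The main obstacle, and the point where the proof deviates from \cite{B-Ho}, is that the error terms coming from the projections involving the invariants $a_2, a_4$ reduce to counting integer pairs $(a, b) \in \bZ^2$ with $|\C(E_{a,b})| < X$ and one or more auxiliary constraints, rather than to a naive box count. This is exactly the setting handled by Theorem \ref{uniform 1}: by exploiting the conditions $|a_4|, |a_2^2 - 4a_4| \geq 1$ (which exclude the cuspidal regions along $a_4 = 0$ and $a_2^2 = 4a_4$) together with the condition $b_{111} \neq 0$ (which keeps us away from the distinguished cusp where the Selmer element is reducible), one is left with integer-point counts on families of quadrics over $\bP^2$ and on sublattices of $\bZ^2$. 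Substituting the bound (\ref{t bd2}) of $O_\ep(X^{17/24+\ep})$ for each such count, and verifying via the weights in the Iwasawa decomposition that the integral over $N' A'$ does not amplify this bound, yields the claimed error $O(X^{17/24})$.

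Finally, the contribution from the cuspidal regions where $b_{111} = 0$ must be shown to be negligible; this is essentially the content of the analogous step in \cite{B-Ho}, and since our height function only differs from theirs by a bounded power at any fixed point of $V(\bR)$, their cusp-cutting argument transfers directly, contributing a term of smaller order than $X^{17/24}$. Combining the main term and these error estimates gives the proposition.
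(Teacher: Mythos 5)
Your overall framing --- adapt Bhargava--Ho's Proposition 7.3 with the degree of the height changed from $36$ to $24$, set up the Iwasawa-decomposition integral over $N'A'$, and apply a Davenport-type lattice count to each fibre --- matches the paper's route. But the error-term analysis in your third paragraph is wrong, and this is the heart of the proposition.

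The error $O\left(X^{\frac{17}{24}}\right)$ does not come from Theorem \ref{uniform 1} or the bound (\ref{t bd2}). The mechanism is much more elementary: the hypotheses $|\C(B)| < X$ and $|b_{111}| \geq 1$, together with the homogeneous expansion of $\C$ under the $G(\bR)$-action, force every coordinate to satisfy $|b_{ijk}| \leq J\, w(b_{ijk}) X^{\frac{1}{24}}$ (the analogue of (25) in \cite{B-Ho} with $X^{1/36}$ replaced by $X^{1/24}$). The relevant projections in the Davenport/Barroero--Widmer error are then projections of this $18$-dimensional box onto \emph{coordinate} subspaces of $V(\bR) \cong \bR^{18}$, each of which drops at least one factor of $X^{1/24}$ from the total volume $X^{18/24} = X^{3/4}$, giving $X^{17/24} = X^{(\dim V - 1)/24}$. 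The projections do not reduce to counting pairs $(a_2,a_4)$ with $|\C(E_{a_2,a_4})| < X$, and no input from the determinant method or the quadric counts of Section \ref{cube-free} is used here. The numerical agreement between $\frac{17}{24} = \frac{25}{48} + \frac{3}{16}$ in (\ref{t bd2}) and $\frac{17}{24} = \frac{\dim V - 1}{\deg \C}$ here is a coincidence of exponents, not a logical dependence. Moreover, if the error really did come from (\ref{t bd2}) you would only obtain $O_\ep\left(X^{\frac{17}{24}+\ep}\right)$, which is weaker than the stated $O\left(X^{\frac{17}{24}}\right)$. Your appeal to Theorem \ref{uniform 1} therefore does not prove the proposition as stated, and the step ``verifying via the weights in the Iwasawa decomposition that the integral over $N'A'$ does not amplify this bound'' is precisely the content you would need to carry out, with the correct (coordinate-box) error terms, following Propositions 7.3--7.6 of \cite{B-Ho} with $k=24$.

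A smaller point: the exclusion of $b_{111} = 0$ is built into the statement you are proving, so the cusp contribution in your last paragraph is not part of this proposition; it is handled separately (as is the locus $a_4(a_2^2-4a_4)=0$, which the paper treats in the following lemma).
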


\begin{proof}
In order to obtain this Proposition we essentially follow along the same lines as in the proof of Propositions 7.3 and 7.6 in \cite{B-Ho}. However, some remarks are in order. First, the homogeneous expansion condition still holds, regardless of the choice of height function. In particular $|\C(B)| \leq X$ and the assumption that $|b_{111}| \geq 1$ imply (25) in \cite{B-Ho}, namely the existence of an absolute constant $J$ such that for each entry $b_{ijk}$ of $B$ we have
\[|b_{ijk}| \leq J w(b_{ijk}) X^{\frac{1}{24}}. \]
This condition implies that the volume computations and projection arguments in \cite{B-Ho} apply equally well in our setting. In particular, the proof essentially follows along the same lines as given in \cite{B-Ho}. The key difference is that in the proofs of Proposition 7.3-7.6 in \cite{B-Ho} one must replace the parameter $k = 36$ with $k = 24$; all other details are the same. 
\end{proof}

An additional issue is that we are looking to exclude the points satisfying $a_4 = 0$ or $a_2^2 - 4a_4 = 0$. We have the following lemma to address this issue: 

\begin{lemma} For $B \in \bZ^3 \otimes \Sym_2(\bZ^3)$, let $a_2, a_4$ be the invariant polynomials given in \cite{B-Ho}. Then the number of elements $B \in \F h R \cap V(\bZ)$ with $|\C(B)| < X$ and $a_4(a_2^2 - 4a_4) = 0$ is $O\left(X^{\frac{17}{24}} \right)$.
\end{lemma}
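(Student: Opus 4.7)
The hypersurface $a_4(a_2^2 - 4a_4) = 0$ in $V = \bQ^3 \otimes \Sym_2(\bQ^3)$ is the union of the two codimension-one subvarieties $\Sigma_1 = \{a_4 = 0\}$ and $\Sigma_2 = \{a_2^2 - 4a_4 = 0\}$. The degree-$2$ isogeny $\phi$ of (\ref{2-isog}) induces a $G(\bR)$-equivariant involution on $V$ interchanging $\Sigma_1$ and $\Sigma_2$ and preserving $|\C|$, so it suffices to bound $\# ( \Sigma_1 \cap \F h R \cap V(\bZ) \cap \{|\C(B)| < X\} )$ uniformly in $h \in G_0$.

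In the Iwasawa decomposition $g = nak$ with diagonal part $a = a(t_1,u_1,t_2,u_2)$, each coordinate $b_{ijk}$ of $B \in V$ is scaled by a monomial weight $\mathbf{w}_{ijk}$ in $(t_i,u_i)$. Since $\C$ is a relative invariant of homogeneous degree $24$ in the entries and $\dim V = 18$, the constraint $|\C(B)| < X$ together with $B \in \F h R$ confines $B$ to a box with $|b_{ijk}| \ll X^{\mathbf{w}_{ijk}/24}$ and $\sum_{ijk} \mathbf{w}_{ijk} = 18$, the implicit constants being uniform in $h \in G_0$ because $G_0$ is compact. This is the same box underlying the volume computation $\Vol(\R_X) \asymp X^{3/4}$ already used in Proposition \ref{3selm}.

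To count integer points of $\Sigma_1$ in this box, I would fix a coordinate $b_*$ of positive weight $\mathbf{w}_* \geq 1$ such that $a_4$ (a homogeneous polynomial of degree $12$ in the coordinates) depends non-trivially on $b_*$. For each admissible choice of the remaining $17$ coordinates the equation $a_4(B) = 0$ has at most $12$ integer solutions in $b_*$. The remaining coordinates range over a box of $17$-dimensional volume $\ll X^{(18-\mathbf{w}_*)/24} \leq X^{17/24}$, and Proposition \ref{dav-BW} applied in dimension $17$ therefore bounds the total contribution of $\Sigma_1$ by $O(X^{17/24})$.

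Exceptional loci — where the leading coefficient of $a_4$ regarded as a polynomial in $b_*$ vanishes, or where some coordinate's weight is too small to apply Davenport's lemma directly (the cusp of $\F$) — are handled by iterating the projection and slicing argument exactly as in the cusp-cutting inclusion-exclusion of the proof of Proposition~7.6 of \cite{B-Ho}. Each iteration produces a further codimension drop and hence a contribution of size $O(X^{(17-j)/24})$ with $j \geq 1$, which is absorbed. The main obstacle is verifying the existence of a coordinate $b_*$ of minimal positive weight on which $a_4$ depends; this can be read off from the explicit formula for $a_4$ in Section~2 of \cite{B-Ho} and is directly parallel to the distinguished-coordinate choice used in Proposition \ref{3selm}. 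The bound for $\Sigma_2$ then follows either by repeating the argument with $a_2^2 - 4a_4$ in place of $a_4$ (it is still degree $12$) or by invoking the $\phi$-involution.
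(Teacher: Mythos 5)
Your proposal is correct and follows essentially the same route as the paper: fix the $17$ coordinates other than the one of largest weight (so their box has volume $O(X^{17/24})$), then observe that the remaining coordinate satisfies a one-variable polynomial equation with boundedly many roots. The only cosmetic difference is that the paper treats the degree-$24$ product $a_4(a_2^2-4a_4)$ in one stroke (getting at most $24$ roots) rather than splitting into the two degree-$12$ factors, and it silently assumes the point you rightly flag, namely that the equation depends nontrivially on the distinguished coordinate.
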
 

\begin{proof} To solve the equation $a_4 = 0$ or $a_2^2 - 4a_4 = 0$ we fix all but one coefficient. As in \cite{B-Ho} this amounts to $O \left(X^{\frac{17}{24}} \prod_{b}^\prime w(b) \right)$ choices, where the prime indicates that precisely one term is missing in the product. By choosing to remove the largest weight we can guarantee that $\prod_b^\prime w(b) \leq 1$. Having chosen $17$ of the variables, the remaining variable (regardless of size) can be chosen in at most $24$ ways by solving the corresponding polynomial equation in a single variable. 
\end{proof} 

Next, we require a change-of-measure formula. This is proved in \cite{B-Ho} as well, namely it is their Proposition 7.7. We state this again for completeness. 

\begin{proposition}[Proposition 7.7, \cite{B-Ho}] \label{BHo77}  There exists a rational number $\mathfrak{J}$ such that, for any measurable function $\phi$ on $V(\bR)$, we have 
\begin{equation} \frac{|\fJ|}{n_i} \int_{R} \int_{G(\bR)} \phi \left(g \cdot V (\vec a) \right) dg d \vec a = \int_{V} \phi(v) dv.
\end{equation} 
\end{proposition}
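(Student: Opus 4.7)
The plan is to establish this change-of-variables formula by the standard Jacobian computation developed by Bhargava and collaborators for coregular representations; the statement is essentially Proposition 7.7 of \cite{B-Ho}, so my goal is to verify that their argument transfers verbatim to the present height-function setting (where $|\C(B)| < X$ replaces their height condition).

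First, I would set up the map $\Phi : G(\bR) \times R \to V(\bR)$ defined by $\Phi(g, \vec a) = g \cdot V(\vec a)$, where $V(\vec a)$ is the chosen section assigning to each invariant tuple $\vec a \in R$ a representative of the corresponding real orbit. By construction of $R$ (which is a set of real-orbit representatives) and by the genericity of points with nonvanishing discriminant, the image of $\Phi$ covers all of $V(\bR)$ up to a measure-zero locus where $a_4(a_2^2 - 4a_4)$ vanishes. The stabilizer in $G(\bR)$ of a generic representative $V(\vec a)$ has order $n_i$, so $\Phi$ is $n_i$-to-one on its generic fibre; this accounts for the factor $1/n_i$ in front of the integral.

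Next, I would compute the Jacobian of $\Phi$ with respect to the product measure $dg\, d\vec a$. Writing $g$ in the Iwasawa-type coordinates $(\nu, \alpha, \kappa)$ from (\ref{Iwasawa}) and using the two invariants $(a_2, a_4)$ to parametrize $R$, the derivative matrix of $\Phi$ decomposes into a block coming from differentiating the $G$-action (whose columns span the tangent space to the orbit) and a block coming from differentiating along the section $\vec a \mapsto V(\vec a)$ (which spans a complementary subspace transverse to orbits). By $G$-equivariance of $\Phi$ and bi-invariance of $dg$, the Jacobian determinant at $(g,\vec a)$ depends only on $\vec a$, not on $g$, so it descends to a function $J(\vec a)$ on $R$.

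A homogeneity argument then pins down $J(\vec a)$. The central $\bG_m$-scaling on $V$ rescales $a_2$ and $a_4$ with fixed weights; matching the total weights of $dg$, $d\vec a$, and $dv$ under this scaling forces $J(\vec a)$ to be a monomial of prescribed degree in $a_2, a_4$. One then checks that this monomial must in fact be a nonzero constant $|\fJ|$: all the weight is absorbed by the measure $d\vec a$ (equivalently, by the formal degree of the representation on the coregular quotient), precisely as in Bhargava-Ho's computation. Since the entire setup is defined over $\bQ$, the constant $\fJ$ is rational. The main obstacle is the bookkeeping in the explicit Jacobian calculation, but because the underlying representation $\bQ^3 \otimes \Sym_2(\bQ^3)$ and the action of $G = \GL_3^2/\mu_3$ are identical to those in \cite{B-Ho}, the computation carries over without modification; only the weight accounting associated with our height $|\C(B)|$ differs (degree $24$ versus $36$), and this only affects the scaling of $\fJ$, not its existence or rationality.
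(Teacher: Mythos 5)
The paper offers no proof of this statement at all: it is imported verbatim from Bhargava--Ho (their Proposition 7.7) and restated ``for completeness,'' so there is nothing internal to compare your argument against. What you have written is a correct outline of the standard Jacobian change-of-variables argument that underlies the cited result --- the map $\Phi(g,\vec a) = g\cdot V(\vec a)$ covering the nondegenerate locus, the generic fibre of size $n_i$ coming from the stabilizer, $G$-equivariance forcing the Jacobian to depend only on $\vec a$, and a homogeneity/weight argument pinning it down to a rational constant --- and this is indeed how Bhargava and Ho prove it. Your sketch defers the explicit Jacobian bookkeeping to the reference, which is no worse than what the paper itself does. One point is genuinely wrong, though: your closing claim that the choice of height $|\C(B)|$ (degree $24$ versus $36$) ``affects the scaling of $\fJ$.'' The proposition is a pure statement about measures on $G(\bR)\times R$ and on $V$; no height function appears in it, and $\fJ$ is a universal constant of the representation, completely independent of how one later truncates by height --- a fact the paper explicitly relies on immediately after the proposition when it asserts that $|\fJ|$ and $\Vol(G(\bZ)\backslash G(\bR))$ do not depend on $H$ or $X$. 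The change of height enters only downstream, in the volume computation $\int_{R(X)} d\vec a$ and in the error terms of Proposition \ref{3selm}, not in the change-of-measure formula itself.
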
 

In particular, Proposition \ref{BHo77} implies 
\begin{align*} \int_{\R_X}  dv & = \int_{\F \cdot R(X)} dv \\
& = |\fJ| \cdot \int_{R(X)} \int_{\F} dg d \vec a \\
& = |\fJ| \cdot \Vol(G(\bZ) \setminus G(\bR)) \cdot \int_{R(X)} d\vec a.
\end{align*}

Combining these results we obtain 
\begin{equation} \label{3-Sel count}N \left(V(\bZ); X\right) = |\fJ| \cdot \Vol(G(\bZ) \setminus G(\bR)) \cdot A_\infty(X) + O \left(X^{\frac{17}{24}}\right). \end{equation} 

We emphasize that the quantities $|\fJ|, \Vol(G(\bZ)\setminus G(\bR))$ are universal constants and therefore do not depend on the choice of the height function $H$, nor on the parameter $X$. Finally we note that the \emph{upper bound sieve} is essentially trivial to execute compared to the lower bound sieve, and the desired upper bound follows. Indeed, the count of non-degenerate integral points in $V(\bZ)$ already serves as a suitable upper bound, as we are not concerned with the exact count. Note that multiplicities can be ignored with the observation that the multiplicities of the action of $G(\bZ)$ on $V$ is uniformly bounded.  

\subsection{Proof of Theorem \ref{MT2}} Put $\N_1(X), \N_2(X), \N_3(X)$ for the cardinalities of the three families considered in Theorem \ref{MT1}, and let $\M_i(X)$ denote the corresponding count weighted by the size of the 3-Selmer group of each element. Then Theorem \ref{MT1} implies that 
\[\N_i(X) \gg X^{\frac{3}{4}} \text{ for } i = 1,2,3.\]
Now, by (\ref{3-Sel count}) the number of 3-Selmer elements in each of the families is $O\left(X^{\frac{3}{4}} \right)$. Therefore 
\[\limsup_{X \rightarrow \infty} \frac{\M_i(X)}{\N_i(X)} \ll \limsup_{X \rightarrow \infty} \frac{X^{3/4}}{X^{3/4}} < \infty,\]
as claimed.

\end{document}